\documentclass{scrartcl}

\usepackage{geometry}                		
\usepackage{graphicx}				
\usepackage{subcaption} 
\graphicspath{{Figures/}}

\RedeclareSectionCommand[
  font=\normalfont\scshape\Large\centering
]{section}

\setkomafont{subsection}{\normalfont\bfseries\large}       
\setkomafont{subsubsection}{\normalfont\itshape\normalsize} 
\setkomafont{paragraph}{\normalfont\normalsize\itshape}
\RedeclareSectionCommand[
  beforeskip=1ex plus .2ex,
  afterskip=1ex,
  afterindent=false
]{paragraph}
\setkomafont{subparagraph}{\normalfont\normalsize\itshape}

\setkomafont{sectionentry}{\scshape}       
\setkomafont{pagenumber}{\normalfont}       
\setkomafont{title}{\scshape\Large\centering}  
\usepackage{mathrsfs}
\usepackage{enumitem}
\usepackage{mathtools}
\usepackage{amssymb}
\usepackage{amsmath}
\usepackage{stmaryrd}
\usepackage{blkarray}
\usepackage{booktabs}
\usepackage{siunitx}
\usepackage{longtable}
\usepackage{array}
\usepackage{threeparttable} 
\usepackage{musicography}

\newcolumntype{L}[1]{>{\raggedright\arraybackslash}p{#1}}

\usepackage{rotating, graphicx}

\usepackage[T1]{fontenc}
\usepackage[french,english]{babel}

\usepackage{tikz-cd}
\usetikzlibrary{calc,babel} 

\usepackage[english]{babel}
\usepackage{amsthm}

\usepackage{biblatex}
\usepackage[colorlinks=true,
            linkcolor=blue,      
            citecolor=teal,      
            urlcolor=magenta,    
            filecolor=purple     
]{hyperref}

\renewenvironment{abstract}{
  \small
  \noindent\textbf{\textsc{Abstract.}}\ } 
{\par\vspace{1em}}
\newcommand{\Act}{\mathbf{Act}}

\newcommand{\longhookrightarrow}{\lhook\joinrel\longrightarrow}

\newcommand{\Cat}{\mathbf{Cat}}
\newcommand{\core}{\operatorname{core}}
\newcommand{\Def}{\operatorname{Def}}
\newcommand{\DefFam}{\mathbf{DefFam}}
\newcommand{\FuncHarmSys}{\mathbf{FuncHarmSys}}
\newcommand{\Gpd}{\mathbf{Gpd}}
\newcommand{\HarmVocab}{\mathbf{HarmVocab}}
\newcommand{\HarmPractice}{\mathbf{HarmPractice}}
\newcommand{\Hept}{\mathcal{H}}
\newcommand{\HDef}{\mathbf{HDef}}
\newcommand{\HMode}{\mathbf{HMode}}

\newcommand{\Mode}{\mathbf{Mode}}

\newcommand{\ModeOrbCov}{\mathbf{ModeOrbCov}}
\newcommand{\ModeOrbCovSys}{\mathbf{ModeOrbCovSys}}

\newcommand{\bfR}{\mathbf{R}}
\newcommand{\Scale}{\mathbf{Scale}}
\newcommand{\ScaleOrbCov}{\mathbf{ScaleOrbCov}}
\newcommand{\Set}{\mathbf{Set}}

\newcommand{\spec}{\operatorname{spec}}
\newcommand{\T}{\mathrm{T}}
\newcommand{\bfT}{\mathbf{T}}
\newcommand{\Z}{\mathbb{Z}}

\newcommand{\ActT}{\mathbf{Act}_{\bfT}}
\newcommand{\ActR}{\mathbf{Act}_{\bfR}}
\newcommand{\ActTR}{\mathbf{Act}_{\bfT \times \bfR}}
\newcommand{\Func}{\mathbf{Func}}

\newcommand{\OrbCovT}{\mathbf{OrbCov}_\bfT}
\newcommand{\OrbCovTR}{\mathbf{OrbCov}_{\bfT \times \bfR}}

\theoremstyle{definition}

\newtheorem{definition}{Definition}[section]
\newtheorem{convention}{Convention}[section]
\newtheorem{example}{Example}[section]

\theoremstyle{plain}

\newtheorem{lemma}{Lemma}[section]
\newtheorem{proposition}{Proposition}[section]
\newtheorem{theorem}{Theorem}[section]

\theoremstyle{remark}
\newtheorem{remark}{Remark}[section]

\title{Mathematical Principles of a Generalized Tonal Practice}
\author{Drew Flieder}
\date{}

\begin{document}
\maketitle

\begin{abstract}
We develop mathematical principles for a generalized tonal practice. We begin with an algebraic theory of modes and scales, formalized as groups and torsors over cyclic groups, respectively. We identify three musically salient classes of mode isomorphism---translations, rotations, and deformations. Upon this basis we organize successive layers of harmonic structure into a hierarchy of Grothendieck fibrations, built from scale types (generalizing the diatonic scale), modes (generalizing the diatonic modes), orbit covers (generalizing their covering by tertian triads), deformation families (generalizing chromatic alteration, such as modal mixture and secondary chords), and harmonic-function assignments (generalizing the assignment of tonic, subdominant, and dominant function). Equipping the resulting structure with a generative syntax---generalizing Rohrmeier's generative grammar of tonal harmony---yields a formal definition of a \emph{harmonic practice}: a harmonic vocabulary together with an assignment of harmonic functions and a generative syntax accommodating hierarchical harmonic organization. We show that harmonic practices form a category, with morphisms constituting an inheritance and enrichment of structure from one harmonic practice to another. The theory originates in the author's own compositional practice, developed initially through the series of compositions Op.~26--31.
\end{abstract}

\pagebreak

\tableofcontents

\section{Introduction}
\label{sec:introduction}

For roughly a decade, my work as a composer followed in the lineage of formal approaches developed by composers like Stockhausen, Xenakis, Babbitt, and Cage.\footnote{The account of this reorientation given in the remainder of this section closely follows, and in places directly adapts, the introduction of \cite{flieder2026functional}, a paper documenting the early relationship between this compositional turn and the preliminary orbit-cover theory of \cite{flieder2026scales}.} I experimented with a variety of systems, mostly of my own devising, for organizing musical materials: parameters governing register, rhythm, articulation, and timbral character, each contributing its own layer of organization to a piece. My approach to harmony throughout this time could broadly be characterized as belonging to the broader ``set-theoretic'' tradition of post-tonal harmonic organization. Then, at a certain point—abruptly, or perhaps only abruptly from where I stood—I experienced a reorientation in my musical thinking. I became convinced that there was something compelling, here and now, in the idea of functional harmony, and that this idea could be recovered without resorting to pastiche or stylistic imitation of older musics. The first piece I wrote afterward (\emph{Tonalities A}) was almost exceedingly plain: a single tonic-subdominant-dominant progression, repeated for the length of the piece.

To say why this reorientation mattered, it is worth first saying something about what I was turning away from. Serial systems of organization tend to suggest an approach to form based on contrast, even where the contrasting parts have affinities with one another. Stockhausen's concept of moment form is a natural outgrowth of this tendency: when a section is determined by a conjunction of parameter values, and many parameters are in play, one expects highly contrasting aesthetic qualities from moment to moment. In much of this music, harmony functions as one parameter among several, contributing to an overall aesthetic gestalt rather than following a process of elaborated harmonic progression in its own right.

In America, Babbitt developed a genuinely foundational theory of serialism, mathematically rigorous in its own right, and later extended by other American serial composers and theorists. Morris in particular has written extensively on the aesthetic ends serial thought can serve, which are often non-teleological, as in much twentieth-century music more broadly: such music rewards a listener who attends to the \emph{present} moment with continual openness, awareness, and sensitivity, rather than one who is always looking forward to a point of arrival, such as a cadence. This is a demanding way of listening, and the music that results from it has produced some of the most beautiful moments I know in any music---passages in Webern, Feldman, Morris, and Stockhausen among them, achieving aesthetic qualities with no real precedent in tonal harmony. A hexachord, for instance, voiced a certain way, has a sonorous quality unlike anything available in a diatonic tertian harmonic vocabulary; such sonorities are simply unavailable within common-practice tonality's harmonic resources.

What common-practice tonality offers instead is a particular economy. The music of Bach is no less demanding in its craft, but it asks comparatively little of the listener, while returning a great deal: a small investment of attention yields a correspondingly large sense of coherence and, often, awe. This is a different achievement from the one described above, and it is not, on reflection, an achievement that many post-tonal harmonic practices set out to reproduce.\footnote{Feldman \cite[174]{feldman2000give} was explicit that this was an intended refusal: functional harmony, in his account, ``hears for us''---a tonal progression does the listener's work in advance, arriving at its effects the way an accountant arrives at a sum, correctly and without requiring the listener's own attention. His own music seeks to withhold that convenience, asking to be heard rather than doing the hearing on the listener's behalf.}

What I have wanted, and what this theory is built to make possible, is to inherit both traditions: the economy of common-practice tonality, in which a little attention returns a great deal, together with the expanded harmonic palette that post-tonal music has made available. This synthesis is the most concise statement of this theory's motivation. Clearly, this did not mean returning to the harmonic vocabulary of diatonic common-practice tonality itself. Common-practice tonality is a richly developed system, optimized over centuries for a particular class of compositional aims, but its harmonic vocabulary---tertian chords built on diatonic scales---need not be the only vocabulary capable of supporting functional syntax. The challenge, as I came to see it, was to build a harmonic framework capable of supporting the core devices of functional tonality---cadentiality, modulation, the recursive embedding of functional regions within one another, prolongation, and so on---while substantially expanding the harmonic resources available to a composer working with them.

My first response to this challenge was a preliminary theory of orbit covers, developed in \cite{flieder2026scales} and used as the basis for a set of compositions titled \emph{Tonalities} (Op.~26). In these pieces I attempted to translate aspects of common-practice functional syntax into non-diatonic, non-tertian harmonic environments generated by orbit covers, drawing largely on the generative approach to harmonic syntax developed by Rohrmeier \cite{rohrmeier2011towards} as a framework for constructing functional progressions. The piece made clear that this was possible: that one could write music retaining a sense of harmonic teleology---in particular, a real sense of cadence---while working with harmonies well outside the diatonic and tertian norm, and that the underlying framework, while structurally richer than familiar post-tonal set-theoretic systems, remained straightforward to compose with in practice.

I continued applying the preliminary theory in my compositional work through \emph{Quartet After Haydn} (Op.~29), and only afterward returned to the theory itself, producing an initial systematic draft. This draft then served as the basis for two further works, \emph{Chorales} (Op.~30) and \emph{Variations for Piano} (Op.~31), by which point the framework had grown substantially more systematic than the one underlying \emph{Tonalities} through \emph{Quartet After Haydn}. The relationship between \emph{Tonalities} and \emph{Chorales}---the way the compositional practice and the theory developed each other in turn---is documented in \cite{flieder2026functional}, presented at the 2026 conference on Mathematics and Computation in Music; I do not repeat that account here. Following that conference, I undertook a complete rewrite of the theory, organized now around a hierarchy of Grothendieck fibrations (\S~\ref{sec:a-hierarchy-of-fibrations-over-heptatonic-scales}) and culminating in the formal definition of a \emph{harmonic practice} (\S~\ref{sec:formalizing-harmonic-practices}). This paper presents that theory. Where the earlier draft made certain compositional possibilities available only implicitly, the present framework makes them explicit and tractable: a harmonic practice now lives in a category of harmonic practices, with morphisms giving a formal notion of inheritance between one harmonic practice and another; this is the very relationship I had already been enacting, piece by piece, through \emph{Variations for Piano} (Op.~31), but had not yet made explicit.

\subsection*{Roadmap}

We now outline the paper's construction. We begin by modeling a mode as a pitch-class set equipped with a group structure whose identity marks a distinguished tonic; forgetting that tonic yields a torsor over a cyclic group---a group structure without a distinguished identity, and equivalent to a generalized interval system in the sense of Lewin \cite{lewin2007generalized}---which we take as our formalism for a scale (\S~\ref{sec:modes-and-scales}). We then isolate three canonical classes of mode isomorphism: translations, rotations, and deformations. Translations and rotations each extend to endofunctors on the category of modes, but deformations do not compose canonically and so form only a quiver over the set of $n$-note scales. Fixing $n = 7$ and taking as objects the translation orbits of heptatonic scales, we close this quiver under composition of deformations to obtain a well-defined category, $\HDef$, whose morphisms are the resulting deformation maps (\S~\ref{sec:three-classes-of-mode-isomorphisms}); $\HDef$ forms the base of the hierarchy of fibrations constructed in \S~\ref{sec:a-hierarchy-of-fibrations-over-heptatonic-scales}. We then equip scales with orbit covers---chords generated by translating a seed subset through that scale---and, extending the classification given in \cite{flieder2026scales}, define a category of orbit covers and characterize precisely when a morphism between two orbit covers exists (\S~\ref{sec:orbit-covers}).

We then assemble harmonic data into a hierarchy of Grothendieck fibrations over $\HDef$, successively enriching each scale with modal, orbit-cover, deformation-family, and harmonic-function data (\S~\ref{sec:a-hierarchy-of-fibrations-over-heptatonic-scales}). Using this hierarchy, we define a \emph{harmonic practice}, which consists of the following dependent data:
\begin{enumerate}
\item a choice of scale class,
\item an orbit-cover structure,
\item a choice of modes,
\item a family of chromatic alterations,
\item an assignment of harmonic functions, and
\item a generative syntax.
\end{enumerate}
It is at this point that the preceding formal apparatus comes together as a tool in the service of composition: a harmonic practice is the object a composer actually works within, and its data are exactly the choices such a composer must make. We then formalize what constitutes a morphism between harmonic practices, expressing when one practice inherits or enriches another (\S~\ref{sec:formalizing-harmonic-practices}). It will become evident that Rohrmeier's generative grammar of common-practice \cite{rohrmeier2011towards} tonal harmony is recovered as a single instance of this general construction.

Rohrmeier's grammar is adopted here as the basis for a generative harmonic syntax because it is thorough in how it organizes tonal progression across four distinct levels---phrase, function, scale-degree, and surface---each with its own rewrite rules, rather than treating harmonic syntax as a single flat grammar of chord-to-chord transitions. This hierarchical structure is what the present framework inherits and generalizes in \S~\ref{sec:formalizing-harmonic-practices}. 

\subsection*{Related Work}
The torsor construction underlying our notion of scale (\S~\ref{sub:scales}) is a descendant of (and in fact equivalent to) Lewin's generalized interval systems \cite{lewin2007generalized}, and we recall his definition explicitly before showing that it recasts as a simply transitive group action. A recent and complementary approach is given by Harasim et al.\ \cite{harasim2020axiomatic}, who provide a rigorous axiomatic account of scales as cyclic embeddings between finite cyclically ordered sets; although the present work does not build on that framework, it shares a related aim of giving scale theory a structural mathematical foundation. Riemannian function theory \cite{riemann1896harmony}, de la Motte's functional assignment \cite{motte1991study}, and Rohrmeier's generative grammar \cite{rohrmeier2011towards} are the principal examples of common-practice harmonic function that we use throughout. Rohrmeier's syntax in particular is what our own constructions are built to generalize and, ultimately, to recover. The cadential-set construction of \S~\ref{subsub:the-cadential-set-construction}, following Mazzola \cite{mazzola2002topos} and Muzzulini \cite{muzzulini1995musical} and motivated by Schoenberg's account of modulation \cite{schoenberg1978theory}, is included for its compositional utility in targeting a modulation.

A number of prior efforts share this paper's general aim of extending tonal syntax beyond the diatonic and tertian setting in which it originated: Lewin's own generalization of tonal functions \cite{lewin1982formal}, Lerdahl's tonal-distance framework \cite{lerdahl2001tonal,lerdahl1999composing}, Scotto's hybrid compositional system \cite{scotto2000hybrid}, and Tymoczko's account of tonality in \cite{tymoczko2023tonality}. All share this paper's interest in transplanting aspects of tonal structure onto non-common-practice-tonal material, but the reader familiar with those works will find that their respective approaches to generalizing functional tonality diverge significantly from our own. The scalar orbit-cover classification of \S~\ref{sec:orbit-covers} extends the affine and nerve-isomorphism classification developed in the author's own prior work \cite{flieder2026scales}, which did not yet incorporate a notion of morphism between orbit covers.

\subsection*{Contributions}

This paper makes the following contributions.
\begin{enumerate}
\item We develop the mode and scale formalism and the three canonical classes of mode isomorphism---translations, rotations, and deformations---as the musical and categorical groundwork on which everything that follows depends.
\item We give a complete characterization (Theorem~\ref{th:main-morphism-characterization}) of when a morphism between two orbit covers exists.
\item We assemble the musically relevant data of a harmonic practice into a hierarchy of Grothendieck fibrations---successively the action-groupoid, orbit-cover, deformation-family, and harmonic-function fibrations---each layer enriching the one beneath it, so that the top layer yields a set of harmonies equipped with harmonic functions.
\item We give a formal definition of a \emph{harmonic practice}---a harmonic vocabulary together with a functional assignment and a generative syntax---built from this hierarchy. This provides the machinery for a generalized tonal practice where modal orbit covers occupy the roles of keys of common-practice tonality, and the possibilities for modulation between modal orbit covers are derived canonically from the data of a harmonic practice. We also give a notion of morphism between harmonic practices, which allows for the inheritance and enrichment of one harmonic practice by another. 
\item We demonstrate the generality of the definition of a harmonic practice by recovering Rohrmeier's generative grammar of common-practice tonal harmony as a single instance of it.
\end{enumerate}

Section~\ref{sec:formalizing-harmonic-practices} closes with examples spanning common-practice harmony, a non-diatonic modal orbit cover realized musically via a slight adjustment of the opening phrase of the theme of my \emph{Variations for Piano}, Op.~31, made for pedagogical purposes (Example~\ref{ex:piano-vars}), and a morphism of harmonic practices transporting a Bach-chorale-derived progression into a new tonal environment (Example~\ref{ex:morphism-of-harmonic-practices}). The constructions of \S~\ref{sec:three-classes-of-mode-isomorphisms} onward are developed for heptatonic scales in $\Z_{12}$, the setting in which common-practice tonal harmony lives; the underlying constructions extend without difficulty to $n$-note scales in arbitrary $N$-tone equal temperament.

\section{Modes and Scales}
\label{sec:modes-and-scales}

Musical scales are traditionally conceived as pitch collections, often associated with concepts such as stepwise adjacency, directionality, and modality. In this section, we develop a formal apparatus for encoding such structures. We begin with modes, which we model as pitch-class sets equipped with a distinguished tonic and a compatible group structure; forgetting this tonic then yields a torsor over a cyclic group,\footnote{The reader familiar with Lewin's definition of a \emph{generalized interval system} (see \cite{lewin2007generalized}) will recognize that these structures correspond to simply transitive group actions on a set, and hence may be understood as torsors. This correspondence is discussed at the beginning of \cite[Chapter 7]{lewin2007generalized}, as well as in \cite[Section 2.14]{vuza1988lewin}.} which we take as our definition of a scale. This prepares the ground for our later development  of orbit covers (\S~\ref{sec:orbit-covers}), which rely on this stepwise structure to generate chords across the scale.

\subsection{Modes}
\label{sub:modes}

To give a preliminary account of a scale, let $X \subseteq \Z_N$ be a subset of the $N$-tone pitch-class universe. For example, the C major scale is typically represented as
\[
X = \{ 0, 2, 4, 5, 7, 9, 11 \} \subseteq  \Z_{12}.
\]
This bare set-theoretic notion omits the structural features usually associated with scales, such as adjacency and directionality. For instance, given $x, y \in X$, we may wish to ask how many steps apart they are. These questions cannot be answered if $X$ is treated merely as a set. Additional structure is needed.

A first observation is that a scale $X$ appears to carry a cyclic structure, and hence may be modeled by the group $\Z_n$, where $n = |X|$. Thus, we seek a bijection
\[
g : X \longrightarrow \Z_n
\]
that assigns to each $x \in X$ a position in a stepwise group. This allows us to speak meaningfully about adjacency: $y$ lies $k$ steps above $x$ precisely when $g(y) - g(x) = k$.

Clearly, not every bijection reflects the conventional scalar order. For example, if $X$ is the C major scale and $h : X \to \Z_7$ sends $2 \mapsto 0$ and $11 \mapsto 1$, then $h(11) - h(2) = 1$, implying that B$\natural$ is adjacent to D$\natural$, which is a clearly atypical assignment. While such maps may have interest for the experimental composer, they do not represent the standard step structure. We thus seek a canonical class of bijections $g : X \to \Z_n$ that preserve scalar adjacency.

Although listing the elements of $X$ in ascending order provides an initial ordering, this ordering is arbitrary up to cyclic rotation. We therefore choose the \emph{normal order} of $X$, namely the rotation whose elements occupy the smallest span, with ties broken by the standard left-packing convention of pitch-class set theory.\footnote{See, for instance, \cite[4]{forte1973structure}.} This determines a canonical ordering of the elements of $X$, and hence a map
\begin{equation}
\label{eq:normal-order}
\mathrm{norm}_X : [n] \longrightarrow X,
\end{equation}
where $[n] = \{0,1,\ldots,n-1\}$ indexes the elements of $X$ in normal order. Identifying $[n]$ with $\Z_n$ via 
\[
\begin{matrix}
\eta : & [n] & \longrightarrow & \Z_n \\
& i & \longmapsto & i,
\end{matrix}
\] 
we canonically label the elements of $X$ with group coordinates. This induces a unique map 
\begin{equation}
\label{eq:mu}
\mu : X \longrightarrow \Z_n
\end{equation} 
that makes the following diagram commute:
\[\begin{tikzcd}
	{[n]} && X \\
	\\
	&& {\Z_n}
	\arrow["{\mathrm{norm}_X}", from=1-1, to=1-3]
	\arrow["\eta"', from=1-1, to=3-3]
	\arrow["\mu", from=1-3, to=3-3]
\end{tikzcd}\]
Using $\mu$, we may define a group operation on $X$ by
\[
x \oplus y \coloneqq \mu^{-1}\big(\mu(x) + \mu(y)\big),
\]
which equips $X$ with a group structure isomorphic to $\mathbb Z_n$.

We are now in a position to define a notion of mode.

\begin{definition}[Mode]
\label{def:mode}
Let $\Z_N$ be a cyclic group representing the $N$-element pitch-class universe, and let $X \subseteq \Z_N$ with $|X| = n$. For each $i \in \Z_n$, let $\T_i : \Z_n \to \Z_n$ denote translation by $i$. Define $\mu_i : X \to \Z_n$ by
\begin{equation}
\label{eq:mu_i}
\mu_i \coloneqq \T_i \circ \mu,
\end{equation}
where $\mu : X \to \Z_n$ is the unique map defined by \eqref{eq:mu}. We define the \emph{$i$th mode of $X$} as the group $(X, \oplus_{\mu_i})$, where
\[
x \oplus_{\mu_i} y \coloneqq {\mu_i}^{-1}({\mu_i}(x) + {\mu_i}(y)).
\]
We call the identity element $t \in X$ the \emph{tonic} and refer to $\Z_n$ as the \emph{scale-degree group} of the mode.
\end{definition}

\begin{example}[Modes of a Major Scale]
Let $X = \{0,2,4,5,7,9,11\} \subset \Z_{12}$ denote the C major scale. Its normal order is
\[
\mathrm{norm}_X = (11, 0, 2, 4, 5, 7, 9),
\]
and thus its zeroth mode $\mu_0 : X \to \Z_7$ is Locrian, since it assigns $11$ (the B$\natural$) to the tonic position. 
\end{example}

We will often denote a mode $(X, \oplus_{\mu_i})$ simply by $(X, \mu_i)$. 

\begin{definition}[Mode Morphism]
\label{def:mode-morphism}
Let $(X, \oplus_{\mu_i})$ and $(X', \oplus_{\mu_{i'}})$ be modes with $|X| = n$ and $|X'| = n'$. A \emph{mode morphism}
\[
\phi : (X, \oplus_{\mu_i}) \longrightarrow (X', \oplus_{\mu_{i'}})
\]
is a function $\phi : X \to X'$ induced by a group homomorphism
\[
\hat{\phi} : \Z_n \longrightarrow \Z_{n'}
\]
that makes the following diagram commute:
\[\begin{tikzcd}
	X && {X'} \\
	\\
	{\Z_n} && {\Z_{n'}}
	\arrow["\phi", from=1-1, to=1-3]
	\arrow["{\mu_i}"', from=1-1, to=3-1]
	\arrow["{\mu_{i'}}", from=1-3, to=3-3]
	\arrow["{\hat{\phi}}"', from=3-1, to=3-3]
\end{tikzcd}\]
The homomorphism $\hat{\phi}$ is of the form
\begin{equation}
\label{eq:hom-condition1}
\hat{\phi}(j) = aj \pmod n',
\end{equation}
where 
\begin{equation}
\label{eq:hom-condition2}
a = \frac{n'}{\gcd(n, n')}.
\end{equation}
\end{definition}

\begin{remark}
The coefficient $a=n'/\gcd(n,n')$ is the smallest positive integer satisfying $na\equiv0\pmod{n'}$. Consequently, $\hat{\phi}$ has image of maximal possible order, namely $\gcd(n,n')$.
\end{remark}

Mode morphisms compose associatively and admit identities, so the class of all modes and their morphisms forms a category, denoted $\Mode$.

\subsection{Scales}
\label{sub:scales}

While it is formally convenient to model a mode as a group structure on a pitch-class set, this formulation is somewhat artificial. In practice, one is not usually interested in adding scale degrees; rather, one cares about the distances—or step relations—between them. The group identity of a mode functions only to designate a distinguished tonic, providing an ``origin'' for the scale. The binary operation $\oplus_{\mu_i}$ then allows us to speak of stepwise motion, typically via subtraction.

If we ``forget the tonic'' $t \in X$ in a mode $(X, \oplus_{\mu_i})$, we are left with a structure known as a \emph{torsor}. Informally, a torsor is like a group without a specified identity element. This more accurately reflects the way scalar relationships are typically understood, namely, as origin-independent. Indeed, chromatic pitch-class space is better modeled as a torsor rather than a group, since no pitch class functions intrinsically as an identity.\footnote{Likewise for chromatic pitch space; see \cite{baez2009torsors}.} When we speak of pitch-class relations, we are describing intervals as relative differences, rather than as coordinates anchored to a fixed reference point.

The torsor structure corresponds to Lewin’s concept of a generalized interval system (GIS), in the sense that a GIS may be recast as a simply transitive group action. Let us therefore recall the definition of a GIS (Definition 2.3.1 in \cite{lewin2007generalized}).

\begin{definition}
\label{def:gis}
A \emph{generalized interval system (GIS)} is a triple $(S, \mathrm{IVLS}, \mathrm{int})$ consisting of:
\begin{enumerate}
\item a set $S$, called the \emph{space};
\item a group $\mathrm{IVLS}$, called the \emph{interval group};
\item a function $\mathrm{int} : S \times S \to \mathrm{IVLS}$ satisfying the following axioms:
	\begin{enumerate}
		\item For all $r, s, t \in S$,
			\[ 
			\mathrm{int}(r, s) \star \mathrm{int}(s, t) = \mathrm{int}(r, t),
			\] 
		where $\star$ denotes the binary operation on $\mathrm{IVLS}$.
		\item For every $s \in S$ and every $i \in \mathrm{IVLS}$, there exists a unique $t \in S$ such that 
			\[ 
			\mathrm{int}(s, t) = i.
			\]
	\end{enumerate}
\end{enumerate}
\end{definition}

A GIS allows one to associate to every pair of elements $s, t \in S$ a unique interval from $s$ to $t$. Lewin then states at the beginning of Chapter 7 of \cite{lewin2007generalized} that an earlier theorem (6.7.1) allows one ``to replace entirely the concept of interval-in-a-GIS by the concept of transposition-operation-on-a-space,'' and thereby to ``replace the idea of GIS structure by the idea of a space $S$ together with'' a simply transitive group action on $S$ \cite[p.\ 157]{lewin2007generalized}. This is precisely a torsor, as defined below.

\begin{definition}[Torsor]
Let $G$ be a group. A \emph{$G$-torsor} is a set $X$ equipped with a simply transitive action $\tau : G \times X \to X$. 
\end{definition}

In our setting, any mode $(X, \oplus_{\mu_i})$ with $|X| = n$ induces a $\Z_n$-torsor:
\begin{equation}
\label{eq:torsor}
\begin{matrix}
\tau : & \Z_n \times X &  \longrightarrow &  X \\
&  \tau(g, x) & \longmapsto &  \mu_i^{-1}(g + \mu_i(x)),
\end{matrix}
\end{equation}
which is in fact the same torsor for every mode on $X$. The action $\tau$ expresses how pitch classes in $X$ are related by their stepwise distances: $\tau(g, x)$ yields the pitch class that lies $g$ steps above $x$.

This leads naturally to the definition of a scale.

\begin{definition}[Scale]
Let $X$ be a pitch-class set with $|X| = n$. The \emph{scale} of $X$ is the $\Z_n$-torsor defined in~\eqref{eq:torsor}. We denote this scale by $(X, \Z_n)$, or simply by $X$ when unambiguous.
\end{definition}

To define morphisms between scales, we recall the general notion of a torsor homomorphism. A \emph{torsor homomorphism} between a $G$-torsor $(X, G)$ and an $H$-torsor $(Y, H)$ is a pair $(f, \varphi)$ consisting of a set function $f : X \to Y$ and a group homomorphism $\varphi : G \to H$ such that
\[
f(g \cdot x) = \varphi(g) \cdot f(x)
\]
for all $g \in G$ and all $x \in X$.

This yields the corresponding notion for scales.

\begin{definition}[Scale Morphism]
\label{def:scale-morphism}
Let $(X, \Z_n)$ and $(X', \Z_{n'})$ be scales. A \emph{scale morphism} is a pair 
\[
(\phi_{i,i'}, \hat{\phi}) : (X, \Z_n) \longrightarrow (X', \Z_{n'}),
\] 
where $\hat{\phi} : \Z_n \to \Z_{n'}$ is the group homomorphism from Definition~\ref{def:mode-morphism} and $\phi_{i,i'} : X \to X'$ is a set map such that the diagram
\[\begin{tikzcd}
	X && {X'} \\
	\\
	{\Z_n} && {\Z_{n'}}
	\arrow["{\phi_{i,i'}}", from=1-1, to=1-3]
	\arrow["{\mu_i}"', from=1-1, to=3-1]
	\arrow["{\mu_{i'}}", from=1-3, to=3-3]
	\arrow["{\hat{\phi}}"', from=3-1, to=3-3]
\end{tikzcd}\]
commutes. Since any choice of modes $(\mu_i, \mu_{i'})$ determines the same underlying torsors, these maps define all scale morphisms from $(X, \Z_n)$ to $(X', \Z_{n'})$.
\end{definition}

Scales and their morphisms thus form a category, denoted $\Scale$.\footnote{We do not engage here with the detailed combinatorial properties of particular scale types---well-formedness, maximal evenness, and related notions---for which there exists a rich and well-developed literature (see, e.g., \cite{carey1989aspects,clampitt2011modes,clough1985variety,clough1986musical,clough1991maximally,clough1999scales,noll2015triads}, among many others).}

\section{Three Classes of Mode Isomomorphisms}
\label{sec:three-classes-of-mode-isomorphisms}

Having defined scales and modes, we now restrict our attention to three particularly musically salient classes of mode isomorphisms. These serve as the foundational morphisms for the hierarchy of fibrations developed in \S~\ref{sec:a-hierarchy-of-fibrations-over-heptatonic-scales}.

Since mode isomorphisms exist only between modes of equal cardinality, we restrict our attention to $7$-note modes in the $12$-tone pitch-class universe. Although the constructions extend naturally to arbitrary values of the mode cardinality $n$ and ambient equal temperament $N$, this specialization keeps the exposition concrete while focusing on the musically important case $(n,N)=(7,12)$, where diatonic harmony in $12$-tone equal temperament lives.

Let $\Mode$ be the category of modes on $\mathbb{Z}_{12}$, and denote by $\Mode_n$ the full subcategory of $n$-element modes. By Definition~\ref{def:mode-morphism} of a mode morphism, every morphism in $\Mode_n$ is an isomorphism, and therefore $\Mode_n$ is a groupoid.

Let
\[
\HMode := \Mode_7
\]
denote the category of heptatonic modes, and let 
\[
U_{\Set} : \HMode \longrightarrow \Set
\]
 be the forgetful functor sending a mode $(X,{\mu_i})$ to its underlying set $X$, and a mode morphism to its underlying set function. Define
\[
\Hept \coloneqq U_{\Set}(\operatorname{Ob}(\HMode)),
\]
the set of heptatonic scales.

Let 
\[
\bfT \coloneqq \{ \bfT_p \mid p \in \Z_{12} \}
\]
 be the group of translations on $\Z_{12}$, and denote the translation orbit by $\Hept/\bfT$.

The remainder of this section is devoted to three particularly important classes of mode isomorphisms: translations, rotations, and deformations. We introduce each class in turn.

\subsection{Translations}
\label{sub:translations}

A particularly significant class of mode isomorphisms is given by those of the form
\[
\phi : (X, {\mu_i}) \longrightarrow (X', {\mu_i})
\]
for which $U_{\Set}(\phi)=\bfT_p$ for some $\bfT_p\in\bfT$. In this case, the two modes are identical up to translation. For example, the translation carrying the C major Ionian mode to the G major Ionian mode preserves the Ionian mode. We call such mode isomorphisms \emph{translations}. Translations preserve the mode type, while changing both the underlying scale and its tonic.

\subsection{Mode Rotations}
\label{sub:mode-rotations}

Another important class of mode isomorphisms consists of those that preserve the underlying scale while changing the chosen mode. These are the isomorphisms
\[
\phi : (X,\mu_i) \longrightarrow (X,\mu_{i+q}),
\]
where $q\in\Z_7$ and addition is taken modulo $7$. For example, the map carrying the Ionian mode $\mu_1$ to the Aeolian mode $\mu_6$ on a fixed diatonic scale $X$ is a mode rotation. We call such isomorphisms \emph{mode rotations}, or simply \emph{rotations}. They form the group
\[
\bfR\coloneqq\{\bfR_q\mid q\in\Z_7\}\cong\Z_7.
\]
Each rotation preserves the underlying scale while changing both the mode type and the distinguished tonic.

\subsection{Deformations}
\label{sub:deformations}

Finally, we consider \emph{deformations}. Consider a mode isomorphism $\phi$ that flattens the 3rd, 6th, and 7th scale degrees of the C major Ionian mode. The resulting scale is still diatonic, but the underlying set map $U_{\Set}(\phi)$ is not a translation, since it does not preserve the intervallic ordering of the mode. Nonetheless, such isomorphisms are musically significant. It is natural to regard the C minor scale as arising from the C major scale by flattening the 3rd, 6th, and 7th scale degrees, rather than by translating the scale up three semitones and then rotating the mode downward two scale degrees to obtain the Aeolian mode.

The generalization of this flattening procedure is achieved by what I call \emph{deformations} of a scale. These permit both flattenings and sharpenings of scale degrees, subject to suitable constraints. To define deformations in full generality, it is first convenient to choose a canonical ordering for each $X \in \Hept$, for which we use the normal order $\mathsf{norm}_X$, as defined in \eqref{eq:normal-order}. We shall also write $\mathsf{norm}_X = (x_0, x_1, \ldots, x_6)$, where $x_i = \mathsf{norm}_X(i)$.

We next introduce the function
\begin{equation}
\label{eq:pc-intervals}
\operatorname{ints} : \Hept \longrightarrow \mathbb{Z}_{12}^7,
\end{equation}
whose coordinates are given by
\[
\operatorname{ints}(X)_i = \mathsf{norm}_X(i+1)-\mathsf{norm}_X(i),
\]
where indices are taken modulo $7$. Thus, 
\[
\operatorname{ints}(X) = (d_0,d_1,\ldots,d_6),
\] 
where $d_i=\operatorname{ints}(X)_i$ is the interval between the $i$th and $(i+1)$th pitch class in the normal order of $X$.

For instance, for the C major scale $X=\{0,2,4,5,7,9,11\}$, we have 
\[
\mathsf{norm}_X=(11,0,2,4,5,7,9),
\] 
and therefore 
\[
\operatorname{ints}(X)=(1,2,2,1,2,2,2).
\]

Using the interval tuple $\operatorname{ints}(X)$, we define two auxiliary functions. First, let
\begin{equation}
\label{eq:sharp-X}
\begin{matrix}
\sharp_X : & [7] & \longrightarrow & \mathcal{P}(\Z_{12}) \\
& i & \longmapsto & \{0,\ldots,d_i-1\},
\end{matrix}
\end{equation}
where $\mathcal{P}$ denotes the powerset operator. Thus $\sharp_X(i)$ consists of the pitch-class intervals strictly smaller than $d_i$. Equivalently, it records the chromatic displacements by which the $i$th pitch class may be sharpened without passing the $(i+1)$th pitch class.

Similarly, define
\begin{equation}
\label{eq:flat-X}
\begin{matrix}
\flat_X : & [7] & \longrightarrow & \mathcal{P}(\Z_{12}) \\
& i & \longmapsto & \{-(d_{i-1}-1),\ldots,0\},
\end{matrix}
\end{equation}
where indices are taken modulo $7$. Then $\flat_X(i)$ consists of the pitch-class intervals by which the $i$th pitch class may be flattened without passing the $(i-1)$th pitch class.

We now provide the definition of deformation spaces. 

\begin{definition}[Deformation Spaces]
\label{def:deformation-spaces}
Let $X \in \Hept$. The \emph{sharp deformation space} of $X$ is the dependent product
\[
\operatorname{Def}_\sharp(X) \coloneqq \prod_{i\in[7]} \sharp_X(i),
\]
and the \emph{flat deformation space} of $X$ is the dependent product
\[
\operatorname{Def}_\flat(X) \coloneqq \prod_{i\in[7]} \flat_X(i).
\]
The \emph{total deformation space} of $X$ is
\[
\operatorname{Def}(X)
\coloneqq
\operatorname{Def}_\sharp(X)
\cup
\operatorname{Def}_\flat(X).
\]

The elements of $\operatorname{Def}_\sharp(X)$ and $\operatorname{Def}_\flat(X)$ are called the \emph{sharp deformations} and \emph{flat deformations} of $X$, respectively. Collectively, the elements of $\operatorname{Def}(X)$ are called the \emph{deformations} of $X$.
\end{definition}

We define the  \emph{magnitude} of a deformation $\Delta \in \Def(X)$ as
\begin{equation}
\operatorname{mag}(\Delta) = \left| \sum_{i=0}^6 \Delta_i \right|
\end{equation}

Figure~\ref{fig:sharp-flat-deformations} anticipates the discussion in \S~\ref{subsub:deformations-as-mode-isomorphisms}, where we show that deformations induce transformations of pitch-class sets.

\begin{figure}[t]
    \centering

    \begin{subfigure}[b]{1\textwidth}
        \centering
        \includegraphics[width=\linewidth]{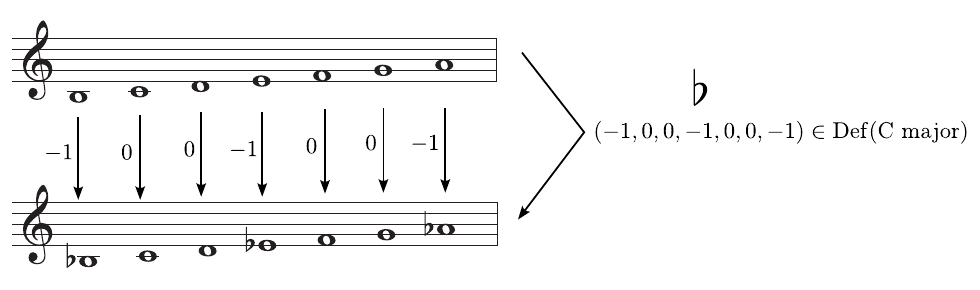}
        \caption{A flat deformation of the C-major diatonic scale.}
        \label{fig:flat-deformation}
    \end{subfigure}
    \hfill
    \begin{subfigure}[b]{1\textwidth}
        \centering
        \includegraphics[width=\linewidth]{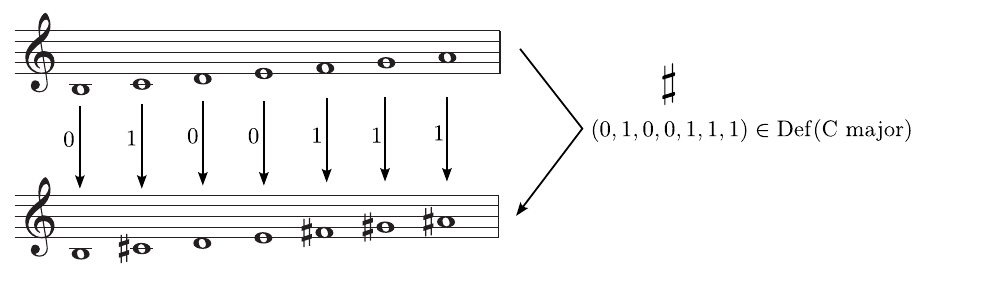}
        \caption{A sharp deformation of the C-major diatonic scale.}
        \label{fig:sharp-deformation}
    \end{subfigure}

    \caption{Examples of flat and sharp deformations acting on the C-major diatonic scale.}
    \label{fig:sharp-flat-deformations}
\end{figure}

\subsubsection{Order Structure on Deformation Spaces}
\label{subsub:order-structure-on-deformation-spaces}

The space $\operatorname{Def}(X)$ carries a natural partial order. For
$\Delta,\Delta'\in\operatorname{Def}(X)$,
\[
\Delta\le\Delta'
\iff
\Delta_i\le\Delta_i'
\quad\text{for every }i.
\]
The subspaces $\operatorname{Def}_\sharp(X)$ and $\operatorname{Def}_\flat(X)$ inherit this partial order.

\begin{proposition}
The spaces $\operatorname{Def}_\sharp(X)$ and $\operatorname{Def}_\flat(X)$ are order anti-isomorphic.
\end{proposition}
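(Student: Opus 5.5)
The plan is to write down an explicit order-reversing bijection built from negation combined with a cyclic shift of indices, and then check the three required properties directly. Throughout, I read the coordinates of deformations as integer representatives: $\sharp_X(i)=\{0,\ldots,d_i-1\}$ and $\flat_X(i)=\{-(d_{i-1}-1),\ldots,0\}$, with $1\le d_i\le 11$. This is already implicit in the partial order of \S\ref{subsub:order-structure-on-deformation-spaces}, since $\Z_{12}$ itself carries no compatible order. Both spaces carry the coordinatewise order.

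The key observation is that negation carries $\sharp_X(i)=\{0,\ldots,d_i-1\}$ bijectively onto $\{-(d_i-1),\ldots,0\}$. This is exactly $\flat_X(i+1)$, with indices modulo $7$, because $\flat_X(i+1)$ is defined using $d_{(i+1)-1}=d_i$. So I define
\[
\Phi:\operatorname{Def}_\sharp(X)\longrightarrow\operatorname{Def}_\flat(X),\qquad \Phi(\Delta)_j \coloneqq -\Delta_{j-1}\quad(j\in[7],\ \text{indices mod }7).
\]
Musically, $\Phi$ converts "raise pitch $i$ toward pitch $i+1$" into "lower pitch $i+1$ toward pitch $i$" by the same amount. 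The proof then proceeds in three steps.

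First, well-definedness: since $\Delta_{j-1}\in\sharp_X(j-1)$, we have $-\Delta_{j-1}\in\flat_X(j)$. Second, bijectivity: the inverse is $\Psi(\Gamma)_i\coloneqq-\Gamma_{i+1}$, which lands in $\sharp_X(i)$ by the same computation read backwards, and $\Psi\circ\Phi$ and $\Phi\circ\Psi$ are visibly the identity. Third, order reversal: if $\Delta\le\Delta'$ coordinatewise, then $\Delta_{j-1}\le\Delta'_{j-1}$ for every $j$, so $-\Delta'_{j-1}\le-\Delta_{j-1}$, that is, $\Phi(\Delta')\le\Phi(\Delta)$. The converse holds by applying the same argument to $\Psi$. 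Hence $\Phi$ is an order anti-isomorphism.

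The only real subtlety is the index bookkeeping. The naive coordinatewise negation $\Delta_i\mapsto-\Delta_i$ does \emph{not} work in general, because $\flat_X(i)$ is controlled by $d_{i-1}$ rather than $d_i$; the cyclic shift by one repairs this. It is harmless for the order, since permuting coordinates preserves the product order. The other point to state explicitly is the integer-representative convention, so that "$\le$" is meaningful. Once these two points are fixed, the remaining checks are routine.
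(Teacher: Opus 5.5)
Your proof is correct and is essentially the paper's argument: the paper uses the same map $(\Gamma(\Delta))_{i+1}=-\Delta_i$ (negation combined with a cyclic shift of indices), and proves surjectivity with the same inverse $\Delta_i=-\Delta'_{i+1}$. Your version is somewhat more careful, since it makes explicit three points the paper leaves implicit: the well-definedness identity $-\sharp_X(i)=\flat_X(i+1)$, the reflection of order via the inverse, and the integer-representative convention needed for $\le$ to make sense.
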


\begin{proof}
First, we will show that there exists a map $\Gamma:\operatorname{Def}_\sharp(X)\to\operatorname{Def}_\flat(X)$ that is order-reversing. We then show that $\Gamma$ is injective and surjective, and hence bijective.

Let  $\Delta = (\Delta_0, \ldots, \Delta_6) \in \operatorname{Def}_\sharp(X)$. To every sharpening of the $i$th pitch class of $X$ by $\Delta_i$ there corresponds a flattening of the $(i+1)$th pitch class by the same amount. Accordingly, define
\[
\Gamma : \operatorname{Def}_\sharp(X)
\longrightarrow
\operatorname{Def}_\flat(X)
\]
by
\[
(\Gamma(\Delta))_{i+1}=-\Delta_{i},
\]
where indices are taken modulo $7$. Clearly $\Gamma$ is order-reversing, since the more a pitch class is sharpened, the more the adjacent pitch class is flattened.

We now show that $\Gamma$ is bijective. First, suppose $\Delta,\Delta'\in\operatorname{Def}_\sharp(X)$ satisfy $\Gamma(\Delta)=\Gamma(\Delta')$. Then
\[
(-\Delta_6,-\Delta_0,\ldots,-\Delta_5)
=
(-\Delta'_6,-\Delta'_0,\ldots,-\Delta'_5),
\]
and hence $-\Delta_i=-\Delta'_i$ for every $i$. Therefore $\Delta=\Delta'$, so $\Gamma$ is injective.

Now let $\Delta'=(\Delta'_0,\ldots,\Delta'_6)\in\operatorname{Def}_\flat(X)$. To every flattening of the $i$th pitch class of $X$ by $\Delta'_i$ there corresponds a sharpening of the $(i-1)$th pitch class by the same amount. Thus $\Delta=(-\Delta'_1,-\Delta'_2,\ldots,-\Delta'_0)$ belongs to $\operatorname{Def}_\sharp(X)$ and satisfies $\Gamma(\Delta)=\Delta'$. Hence $\Gamma$ is surjective.
\end{proof}

\subsubsection{Deformations as Mode Isomorphisms}
\label{subsub:deformations-as-mode-isomorphisms}

We now show how a deformation of $X \in \Hept$ extends canonically to every scale in its translation orbit $[X]$. Let $\Delta=(\Delta_0,\ldots,\Delta_{n-1})\in\operatorname{Def}(X)$. Then $\Delta$ determines a function
\begin{equation}
\begin{matrix}
\Delta^X : & X & \longrightarrow & \Z_{12} \\
& x_i & \longmapsto & x_i+\Delta_i,
\end{matrix}
\end{equation}
where $x_i=\mathsf{norm}_X(i)$.

Since translations preserve the interval tuple $\operatorname{ints}(X)$, they also preserve the deformation spaces. Hence, if $X'\in[X]$, then $\operatorname{Def}(X')=\operatorname{Def}(X)$, and the same deformation vector $\Delta$ induces a map
\[
\begin{matrix}
\Delta^{X'} : & X' & \longrightarrow &\Z_{12},\\
& x'_i & \longmapsto & x'_i+\Delta_i.
\end{matrix}
\]
Accordingly, $\Delta$ induces a map on the translation orbit $\Delta : [X] \to \Hept$, given by
\[
\Delta(X')
=
\{\Delta^{X'}(x) \mid x\in X'\}.
\]
We may therefore denote the deformation space over the whole translation orbit $[X]$ as $\Def(X)$.

It remains to show that the image of $\Delta$ is itself a translation orbit. In particular, we show that $\Delta$ commutes with translations.

\begin{proposition}
\label{prop:equivariance}
Let $X\in\Hept$ and let $\Delta\in\Def(X)$. Then $\Delta$ is
$\mathbf T$-equivariant; that is,
\[
\Delta\circ\mathbf T_p
=
\mathbf T_p\circ\Delta
\]
for every $p\in\mathbb Z_{12}$.
Consequently,
\[
\operatorname{im}(\Delta)
=
[\Delta(X)].
\]
\end{proposition}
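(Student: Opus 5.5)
The plan is to reduce everything to the behaviour of the normal order under translation. The key lemma is
\[
\mathsf{norm}_{\bfT_p X}(i) = \mathsf{norm}_X(i) + p \qquad \text{for all } i \in [7].
\]
In words, translating $X$ by $p$ translates each entry of its normal order by $p$ and keeps the indexing.

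To prove the lemma, I would use the fact that $\bfT_p$ is a bijection $X \to \bfT_p X$ preserving cyclic ascending order in $\Z_{12}$. Hence the seven cyclic rotations of the ascending listing of $X$ correspond bijectively to those of $\bfT_p X$. Corresponding rotations have the same span, since a span is a difference of pitch classes and differences are translation invariant. The left-packing tie-break compares consecutive differences from the first element, so it too is translation invariant.

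So the rotation chosen as normal order for $X$ maps to the one chosen for $\bfT_p X$. This is the step I expect to be the main obstacle. The paper's convention defines normal order via representatives in $\{0,\dots,11\}$, and one must check that the selection criteria refer only to intervals. If two rotations tie completely, they give the same interval pattern; the choice is then ambiguous, but the lemma still holds once we fix the convention of taking the rotation whose first element is chosen consistently, i.e.\ transported by $\bfT_p$. I would state this explicitly as the convention implicit in the paper's claim that $\operatorname{ints}$ and $\Def$ are constant on $[X]$.

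Given the lemma, the equivariance is a direct computation. For $X' \in [X]$, write $x'_i = \mathsf{norm}_{X'}(i)$. Then $\bfT_p X'$ has $i$th normal-order entry $x'_i + p$. Applying $\Delta$ gives
\[
\Delta(\bfT_p X') = \{x'_i + p + \Delta_i\} = \bfT_p\{x'_i + \Delta_i\} = \bfT_p \Delta(X').
\]
This uses that $\Def(\bfT_p X') = \Def(X')$, which holds because $\operatorname{ints}$ is unchanged by the lemma, so the same $\Delta$ applies.

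For the consequence, I would argue
\[
\operatorname{im}(\Delta) = \{\Delta(\bfT_p X)\}_p = \{\bfT_p \Delta(X)\}_p = [\Delta(X)],
\]
using that every element of $[X]$ is $\bfT_p X$ for some $p$. It remains to note that $\Delta(X) \in \Hept$. Sharp entries lie in $\{0,\dots,d_i-1\}$, so $x_i + \Delta_i$ stays strictly below $x_{i+1}$ in the cyclic order; flat entries are handled symmetrically. The seven images are therefore distinct and $\Delta(X)$ has seven elements.
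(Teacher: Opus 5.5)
Your proof is correct and is essentially the paper's argument: the paper's one-line computation $\Delta(x_i+p)=x_i+p+\Delta_i=\mathbf T_p(\Delta(x_i))$ tacitly uses your lemma $\mathsf{norm}_{\mathbf T_p X}(i)=\mathsf{norm}_X(i)+p$, and you are right to make that lemma explicit, along with the check that $\Delta(X)$ is again heptatonic. The complete-tie case you hedge against cannot occur: two rotations with the same interval pattern would make $\operatorname{ints}(X)$ invariant under a nontrivial cyclic shift of a $7$-tuple, forcing all seven steps to be equal, which is impossible because $7$ does not divide $12$. So drop the fallback convention, which could not work in any case for a set with $\mathbf T_p(X)=X$, $p\neq 0$, since $\mathsf{norm}_{\mathbf T_p X}=\mathsf{norm}_X$ depends on the set alone.
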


\begin{proof}
For every $p\in\mathbb Z_{12}$, we have $\Delta(\mathbf T_p(X)) = \mathbf T_p(\Delta(X))$, since if $x_i\in X$, then
\[
\Delta(x_i+p) = 
x_i+p+\Delta_i =
\mathbf T_p(\Delta(x_i)).
\]
Hence
\[
\operatorname{im}(\Delta)
=
\{\mathbf T_p(\Delta(X))\mid p\in\mathbb Z_{12}\}
=
[\Delta(X)],
\]
which is a translation orbit.
\end{proof}

\begin{definition}[Deformation Symmetry]
Let $X\in\Hept$ and let $\Delta\in\operatorname{Def}(X)$. If $[\Delta(X)]=[X]$, then $\Delta$ is called a \emph{deformation symmetry} of the translation orbit $[X]$.
\end{definition}

Deformations $\Delta\in\operatorname{Def}(X)$ give rise to mode isomorphisms of the form
\[
\phi : (X,{\mu_i})
\longrightarrow
(\Delta(X),{\mu_j}).
\]
We call such mode isomorphisms \emph{deformations}. Deformations act by changing both the underlying scale and the mode type. Table~\ref{tab:mode-isomorphisms} compares this behavior with the two previously introduced classes of mode isomorphism.

\begin{table}[h]
\centering
\begin{threeparttable}
\begin{tabular}{lccc}
\toprule
 & \textbf{Mode} & \textbf{Scale} & \textbf{Tonic} \\
\midrule
$\mathbf T_p \in \bfT$ & Preserved & Changed   & Changed \\
$\mathbf R_q \in \bfR$ & Changed   & Preserved & Changed \\
$\Delta \in \Def(X)$      & Changed   & Changed   & Preserved\tnote{$\dagger$} \\
\bottomrule
\end{tabular}
\begin{tablenotes}
\small
\item[$\dagger$] In the common case; deformations need not preserve the tonic in general.
\end{tablenotes}
\end{threeparttable}
\caption{The three classes of mode isomorphisms and the data they preserve.}
\label{tab:mode-isomorphisms}
\end{table}

\begin{example}
Let $X=\{0,2,4,5,7,9,11\}$ be the C major scale, so that $(X,\mu_1)$ is the C Ionian mode. Let $\Delta$ be the flat deformation that sends
\[
4\mapsto3,\qquad
9\mapsto8,\qquad
11\mapsto10,
\]
and fixes every other pitch class. Then the induced deformation
\[
\phi : (X,\mu_1)
\longrightarrow
(\Delta(X),\mu_6)
\]
carries the C Ionian mode to the C Aeolian mode. 
\end{example}

For each mode $(X,\mu_i)$, let $\tau : \Z_7 \times X \longrightarrow X$ denote the corresponding torsor action defined in \eqref{eq:torsor}. We write $q+x\coloneqq\tau(q,x)$. Thus $q+x$ is the pitch class lying $q$ steps above $x$. 

\begin{proposition}
Let $(X,\mu_i)$ and $(Y,\mu_j)$ be modes, and let $\Delta:X\to Y$ be a deformation satisfying $\mu_j\circ\Delta=\mu_i$. Then $\Delta$ induces the translation
\begin{equation}
\label{eq:delta}
\begin{matrix}
\delta : & \Z_7& \longrightarrow& \Z_7,\\
& n & \longmapsto & n+(j-i),
\end{matrix}
\end{equation}
on mode indices. Equivalently, $\mu_{\delta(n)}\circ\Delta=\mu_n$ for every $n\in\Z_7$.
\end{proposition}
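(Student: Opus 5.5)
The plan is to unwind the definitions of $\mu_n$ on both sides and reduce everything to an identity in $\Z_7$. By Definition~\ref{def:mode} we have $\mu_n = \T_n\circ\mu_X$ for every $n\in\Z_7$, where $\mu_X : X\to\Z_7$ is the normal-order labelling~\eqref{eq:mu}. Similarly $\mu_m = \T_m\circ\mu_Y$ on $Y$. Here we write $\mu_X,\mu_Y$ to distinguish the base labellings of the two scales, which the paper denotes uniformly by $\mu$.

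Since each $\T_k$ is a bijection of $\Z_7$ with $\T_k\circ\T_l=\T_{k+l}$, the hypothesis $\mu_j\circ\Delta=\mu_i$ can be rewritten as $\T_j\circ\mu_Y\circ\Delta=\T_i\circ\mu_X$. Applying $\T_{-j}$ to both sides gives the key relation
\[
\mu_Y\circ\Delta=\T_{i-j}\circ\mu_X .
\]
Thus the deformation shifts base labels by the constant $i-j$. This is the only place the hypothesis is used, and the step I expect to require the most care is simply keeping the sign conventions straight, because $\T$ appears on different sides.

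With this relation in hand, fix $n\in\Z_7$ and compute
\[
\mu_{\delta(n)}\circ\Delta=\T_{n+j-i}\circ\mu_Y\circ\Delta=\T_{n+j-i}\circ\T_{i-j}\circ\mu_X=\T_n\circ\mu_X=\mu_n ,
\]
which is the asserted equivalence. Two further points remain.
\begin{enumerate}
\item $\delta$ is a translation of $\Z_7$ by definition, namely $\T_{j-i}$, so it is a bijection on mode indices.
\item It is the unique index map with this property. If $\mu_m\circ\Delta=\mu_n$, then $\T_m\circ\T_{i-j}=\T_n$, so $m=n+j-i$. Here we use that $\Delta$ is a bijection $X\to Y$ (it is a mode isomorphism) and that $\T_k=\T_l$ forces $k=l$.
\end{enumerate}
The uniqueness justifies the phrase ``induces'': a deformation that carries the mode $(X,\mu_i)$ to $(Y,\mu_j)$ carries every mode of $X$ to a mode of $Y$, shifting the index uniformly by $j-i$.
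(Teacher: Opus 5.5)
Your proof is correct, but it takes a somewhat different route from the paper's.

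The paper never unwinds the base labelling. It sets $n=i+q$ and chains
$\mu_{i+q}(x)=\mu_i(q+x)=\mu_j(\Delta(q+x))=\mu_j(q+\Delta(x))=\mu_{j+q}(\Delta(x))$,
where $q+x$ is the torsor action of \eqref{eq:torsor}. The argument therefore rests on $\Delta$ commuting with stepping through the scale. That presentation makes the musical reason visible: a deformation preserves step relations, so moving the tonic $q$ steps in the source moves it $q$ steps in the target. However, the equivariance $\Delta(q+x)=q+\Delta(x)$ is used there without comment. It does follow from the hypothesis, since $\Delta=\mu_j^{-1}\circ\mu_i$ conjugates one torsor action into the other.

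Your version sidesteps the torsor entirely. Writing $\mu_n=\T_n\circ\mu$ and using $\T_k\circ\T_l=\T_{k+l}$, you extract the single consequence $\mu_Y\circ\Delta=\T_{i-j}\circ\mu_X$ of the hypothesis and finish by arithmetic in $\Z_7$. This is fully self-contained and needs no unstated equivariance. You also prove uniqueness of the index map, which the paper leaves implicit in the word ``induces''.

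One small correction in that uniqueness step: the cancellation giving $\T_m\circ\T_{i-j}=\T_n$ removes $\mu_X$ on the right. It therefore uses that $\mu_X$ is surjective (a bijection $X\to\Z_7$), not that $\Delta$ is a bijection.
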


\begin{proof}
Let $q\in\Z_7$. For every $x\in X$,
\begin{align*}
\mu_{i+q}(x)
&=\mu_i(q+x) \\
&=\mu_j\bigl(\Delta(q+x)\bigr) \\
&=\mu_j\bigl(q+\Delta(x)\bigr) \\
&=\mu_{j+q}\bigl(\Delta(x)\bigr).
\end{align*}
Hence $\mu_{j+q}\circ\Delta=\mu_{i+q}$. Therefore the induced map on mode indices is the translation $n\mapsto n+(j-i)$.
\end{proof}

\begin{example}[Iterated Parallel-Minor Deformation]
\label{ex:iterated-parallel-minor-deformation}
Let $X=\{0,2,4,5,7,9,11\}$ be the C-major scale, and consider the deformation
\[
\Delta=(-1,0,0,-1,0,0,-1).
\]
As in the preceding example, this deformation carries the C Ionian mode $(X,\mu_1)$ to the C Aeolian mode $(\mathbf T_3(X),\mu_6)$. Thus, on the scale component, $\Delta$ acts by the chromatic translation $\mathbf T_3$, while its induced action on mode indices is $\delta(i)=i+5\pmod 7$.

The deformation may therefore be iterated. If
\[
X_k\coloneqq\mathbf T_{3k}(X)
\qquad\text{and}\qquad
i_k\coloneqq i+5k\pmod 7,
\]
then
\[
\Delta^k(X,\mu_1)=(X_k,\mu_{1_k}).
\]
The first seven iterations are shown in Figure~\ref{fig:iterated-parallel-minor-deformation}. Explicitly, the mode indices follow the cycle
\[
1\longmapsto6\longmapsto4\longmapsto2
\longmapsto7\longmapsto5\longmapsto3
\longmapsto1,
\]
whereas the scale components follow the sequence
\[
X\longmapsto
\mathbf T_3(X)\longmapsto
\mathbf T_6(X)\longmapsto
\mathbf T_9(X)\longmapsto
X\longmapsto
\mathbf T_3(X)\longmapsto
\mathbf T_6(X)\longmapsto
\mathbf T_9(X).
\]
Consequently, after seven iterations the mode index has returned to its initial value, but the scale component has not. This reflects the fact that the induced translation $\delta(i)=i+5$ has order $7$ on the mode indices, whereas $\mathbf T_3$ has order $4$ in the group $\bfT$ of chromatic translations; therefore the deformation itself has order $\operatorname{lcm}(7,4)=28$ on the corresponding modes.

\begin{figure}[h!]
\centering
\includegraphics[width=0.6\textwidth]{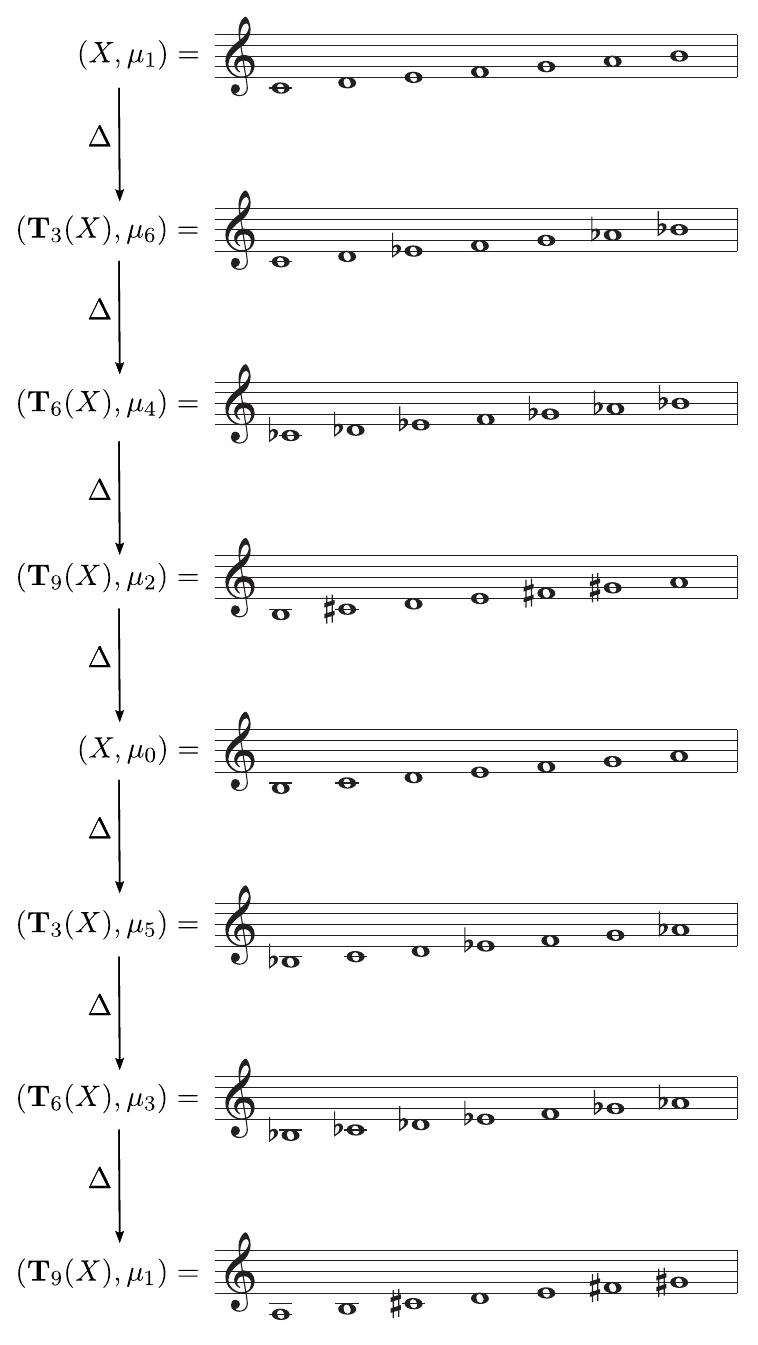}
\caption{The first seven iterations of the deformation $\Delta=(-1,0,0,-1,0,0,-1)$, beginning with the C Ionian mode $(X,\mu_1)$. At each step, the underlying scale is translated upward by three semitones, while the mode index is translated by five elements of $\Z_7$. Thus the $k$th iteration of $\Delta$ is $(\mathbf T_{3k}(X),\mu_{1+5k})$. After seven iterations, the mode index returns to $1$, while the scale component is $\mathbf T_9(X)$.}
\label{fig:iterated-parallel-minor-deformation}
\end{figure}
\end{example}

\subsubsection{The Deformation Quiver}
\label{subsub:the-deformation-quiver}

Translations and mode rotations are defined uniformly on every object in the category $\HMode$ of heptatonic modes with mode isomorphisms. Thus each translation $\bfT_p$ and each mode rotation $\bfR_q$ determines an endofunctor
\[
\bfT_p,\bfR_q:\HMode\longrightarrow\HMode.
\]
For example, the translation $\bfT_p$ acts on objects by $(X,\mu_i)\mapsto(\bfT_p(X),\mu_i)$, and sends every mode isomorphism $\phi:(X,\mu_i) \to(Y,\mu_j)$ to the mode isomorphism
\[
\bfT_p(\phi) : 
(\bfT_p(X),\mu_i)
\longrightarrow
(\bfT_p(Y),\mu_j),
\]
so that the diagram
\[\begin{tikzcd}
	{(X, \mu_i)} && {(Y, \mu_j)} \\
	\\
	{(\bfT_p(X), \mu_i)} && {(\bfT_p(Y), \mu_j)}
	\arrow["\phi", from=1-1, to=1-3]
	\arrow["{\bfT_p}"', from=1-1, to=3-1]
	\arrow["{\bfT_p}", from=1-3, to=3-3]
	\arrow["{\bfT_p(\phi)}"', from=3-1, to=3-3]
\end{tikzcd}\]
commutes. An analogous construction applies to the rotations $\bfR_q$.

A deformation $\Delta\in\Def(X)$, however, is defined only relative to a single translation orbit of scales and therefore does not extend naturally to an endofunctor on $\HMode$. Moreover, deformations do not admit a canonical composition. Given $\Delta : X \to Y$ and $\Delta' : Y \to Z$, there need not exist a distinguished deformation $X \to Z$ representing their composite. Thus, unlike translations and rotations, whose morphisms form categories of modes, deformations do not endow a set of isomorphic modes with the structure of a category. Instead, they are more naturally viewed as the arrows of a quiver. 

\begin{definition}[Deformation Quiver]
Define
\[
\mathcal{D} \coloneqq  \coprod_{X\in\mathcal H}\Def(X).
\]
Each deformation has a source and target scale, giving maps 
\[
s,t:\mathcal{D} \longrightarrow \mathcal H, \qquad s(\Delta) = X, \quad t(\Delta) = Y.
\]
The quadruple  $Q \coloneqq (\mathcal H, \mathcal{D},s,t)$ therefore defines a quiver, which we call the \emph{deformation quiver} over $\Hept$.
\end{definition}

Since deformations in $\Def(X)$ extend to all $X' \in [X]$, it is natural to pass to the quotient by the translation action. Writing $\mathcal H/\mathbf T$ for the set of translation orbits of heptatonic scales, define
\[
\mathcal{D}/\mathbf T
=
\coprod_{[X]\in\mathcal H/\mathbf T} \Def(X),
\]
together with the induced source and target maps
\[
s/\mathbf T,\;
t/\mathbf T:
\mathcal{D}/\mathbf T
\longrightarrow
\mathcal H/\mathbf T.
\]
The resulting quiver $Q/\bfT$ records the deformations between translation classes of scales. In \S~\ref{sec:a-hierarchy-of-fibrations-over-heptatonic-scales} we will extend $Q/\bfT$ to a category $\HDef$, which underlies a hierarchy of fibrations.

\subsubsection{A Constrained Construction Scheme for Compositional Applications}
\label{subsub:a-constrained-construction-scheme}

The full deformation quiver is extremely large and therefore of limited practical use as a compositional tool. In practice, one may instead work with subquivers obtained by imposing musically salient constraints. For example, one may restrict to deformations of maximum magnitude $\epsilon$, or discard translation orbits not of interest for a given application.  

The following construction scheme is one that I have employed in my own compositional practice.
\begin{enumerate}
    \item Choose a collection $H \subset \Hept/\bfT$ of translation orbits satisfying desired intervallic constraints.
    \item Fix a maximum deformation magnitude $\epsilon$.
\end{enumerate}
These two choices give rise to a canonical quiver construction. First, rather than labeling arrows by deformations, we first record only the existence of sufficiently small deformations. Thus we introduce an arrow from $[X]$ to $[Y]$ whenever there exists a deformation $\Delta \in \Def(X)$ whose magnitude satisfies $\operatorname{mag}(\Delta)\le\epsilon$. In other words, we connect two translation orbits whenever they differ by a sufficiently small deformation. The resulting quiver therefore records which translation orbits are ``close'' with respect to the bound $\epsilon$.

For translation orbits $[X],[Y]\in H$, say that $[Y]$ is \emph{$\epsilon$-reachable} from $[X]$ if there exists $\Delta\in\Def(X)$ such that $\operatorname{mag}(\Delta)\leq\epsilon$ and $[\Delta(X)]=[Y]$. This defines a relation
\[
\mathscr{R}\ \subseteq\ H\times H,
\qquad
[X]\,\mathscr{R}\,[Y]
\iff
[Y]\text{ is $\epsilon$-reachable from }[X].
\]

\begin{definition}
The \emph{$\epsilon$-adjacency quiver} of $H$ is the quiver $H^\epsilon=(H,A,s,t)$, where the arrow set is
\[
A\coloneqq
\{\,([X],[Y])\in H\times H \mid [X]\,\mathscr{R}\,[Y]\,\},
\]
and the source and target maps $s,t:A\to H$ are the coordinate projections,
\[
s([X],[Y])=[X],
\qquad
t([X],[Y])=[Y].
\]
\end{definition}

Thus there is at most one arrow between any ordered pair of translation orbits. The quiver $H^\epsilon$ therefore records only the existence of admissible deformations, without distinguishing between different deformations realizing the same adjacency. Figure~\ref{fig:heptatonic-deformations-constrained} illustrates the $\epsilon$-adjacency quiver $H^\epsilon$, where $H\subset\Hept/\bfT$ consists of the translation orbits whose successive scale steps are at most three semitones, and $\epsilon=1$. Thus an arrow records the existence of a deformation from one translation orbit to another obtained by raising or lowering a single scale degree by one semitone.

\begin{sidewaysfigure}
    \centering
    \includegraphics[width=\textheight]{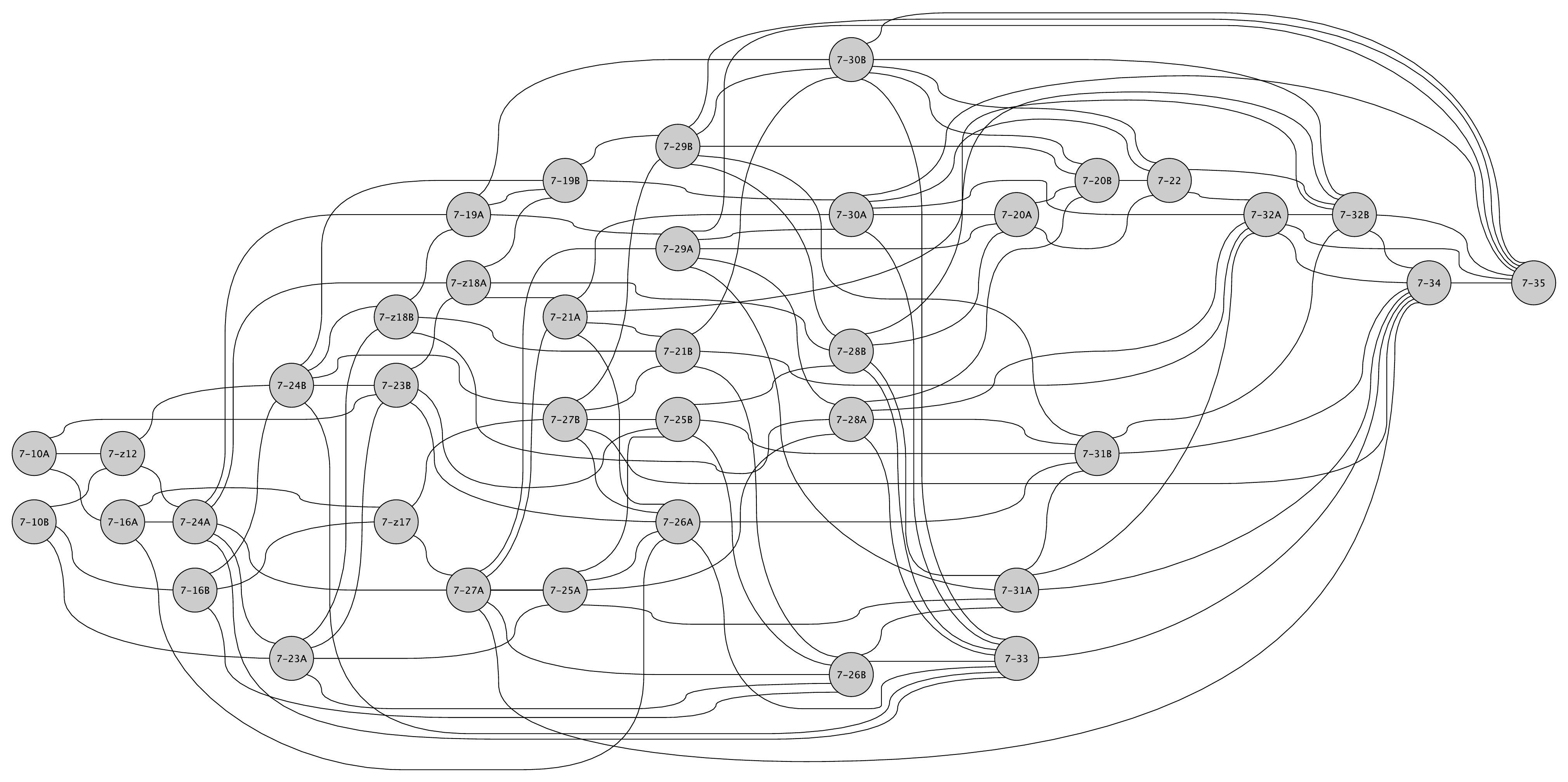}
    \caption{The $\epsilon$-adjacency quiver $H^\epsilon$ for heptatonic scales with $\epsilon=1$. Vertices are translation orbits whose successive scale steps are all at most three semitones. Two vertices are joined by an arrow whenever they are related by a deformation of magnitude at most $1$, that is, by raising or lowering a single pitch class by one semitone.}
    \label{fig:heptatonic-deformations-constrained}
\end{sidewaysfigure}

\subsubsection{Fibered Deformation Quivers}
\label{subsub:fibered-deformation-quivers}

The $\epsilon$-adjacency quiver $H^\epsilon$ records only which pairs of translation orbits are sufficiently close. One may now enlarge this quiver by replacing each arrow with a collection of deformations between the corresponding translation orbits. For example, in common-practice harmony, one may take $H$ to consist of the translation orbits of the diatonic, harmonic minor, and melodic minor scales. These three translation orbits are pairwise connected in $H^1$, since each pair differs by a deformation of magnitude $1$. Relative to the Aeolian mode, the harmonic minor is obtained by raising the seventh degree by one semitone; relative to the Ionian mode, the melodic minor is obtained by lowering the third degree by one semitone; and relative to the harmonic minor, the melodic minor is obtained by raising the sixth degree by one semitone.

However, the deformations used in practice are not restricted to those of magnitude at most $\epsilon$. For example, within the natural minor mode one may raise both the sixth and seventh degrees to obtain the melodic minor mode, producing a deformation of magnitude $2$. Likewise, from the Ionian mode one may obtain the parallel minor by lowering the third, sixth, and seventh degrees, producing a deformation of magnitude $3$. The role of the $\epsilon$-adjacency quiver is therefore not necessarily to constrain the deformations themselves, but to constrain which pairs of translation orbits may be related by deformation. Once an edge of $H^\epsilon$ has been chosen, deformations of arbitrary magnitude between the corresponding modes may be considered. Hence we may enlarge the $\epsilon$-adjacency quiver by assigning to each of its arrows a collection of deformations realizing the corresponding adjacency.

Let $\Def(X,Y)$ denote the set of deformations $\Delta\in\Def(X)$ such that $\Delta(X)\in[Y]$. We then specify, for each arrow $a:[X]\to[Y]$ of $H^\epsilon$, a subset
\[
\mathcal E(a) \subseteq \Def(X,Y),
\]
so that $\mathcal E$ is an indexed family of sets over $A$. The expanded arrow set is the dependent sum of this family,
\[
A^{\mathcal E}
\coloneqq
\sum_{a\in A}\mathcal{E}(a),
\]
whose elements are pairs $(a,\Delta)$ consisting of an arrow $a$ of $H^\epsilon$ together with a deformation belonging to $\mathcal{E}(a)$. 

There is a canonical projection
\[
\pi : A^\mathcal{E} \longrightarrow A
\]
sending $(a,\Delta)$ to $a$, and the fiber over an adjacency arrow $a$ is therefore precisely the chosen collection $\mathcal{E}(a)$ of deformations. Thus every arrow of the $\epsilon$-adjacency quiver is replaced by a fiber of deformation arrows between the same pair of translation orbits.

\begin{definition}
The \emph{$\mathcal{E}$-expansion} of the $\epsilon$-adjacency quiver $H^\epsilon$ is the quiver
\[
H^{\epsilon,\mathcal{E}}\coloneqq(H,A^\mathcal{E},s_\mathcal{E},t_\mathcal{E}),
\]
where
\[
s_\mathcal{E}(a,\Delta)=s(a),
\qquad
t_\mathcal{E}(a,\Delta)=t(a).
\]
\end{definition}

\begin{example}
Let $H^\epsilon=(H,A,s,t)$ be the $\epsilon$-adjacency quiver of Figure~\ref{fig:heptatonic-deformations-constrained}. Define
\[
\mathcal E(a)
=
\{\Delta\in\Def(s(a),t(a))\mid \operatorname{mag}(\Delta)\le3\}
\]
for each $a \in A$, so that $\mathcal E(a)$ consists of all deformations between the corresponding translation orbits whose magnitude is at most $3$.
\end{example}

\section{Orbit Covers}
\label{sec:orbit-covers}

In \cite{flieder2026scales}, orbit covers were introduced as a distinguished class of coverings of a scale, namely those obtained by translating a fixed subset under the action of the scale's translation group (Figure~\ref{fig:orbit-cover-examples}). Although that paper classified orbit covers, it did not define morphisms between them. Consequently, it did not endow orbit covers with the structure of a category.

\begin{figure}[h!]
\centering

\begin{subfigure}[t]{0.7\textwidth}
    \centering
    \includegraphics[width=\linewidth]{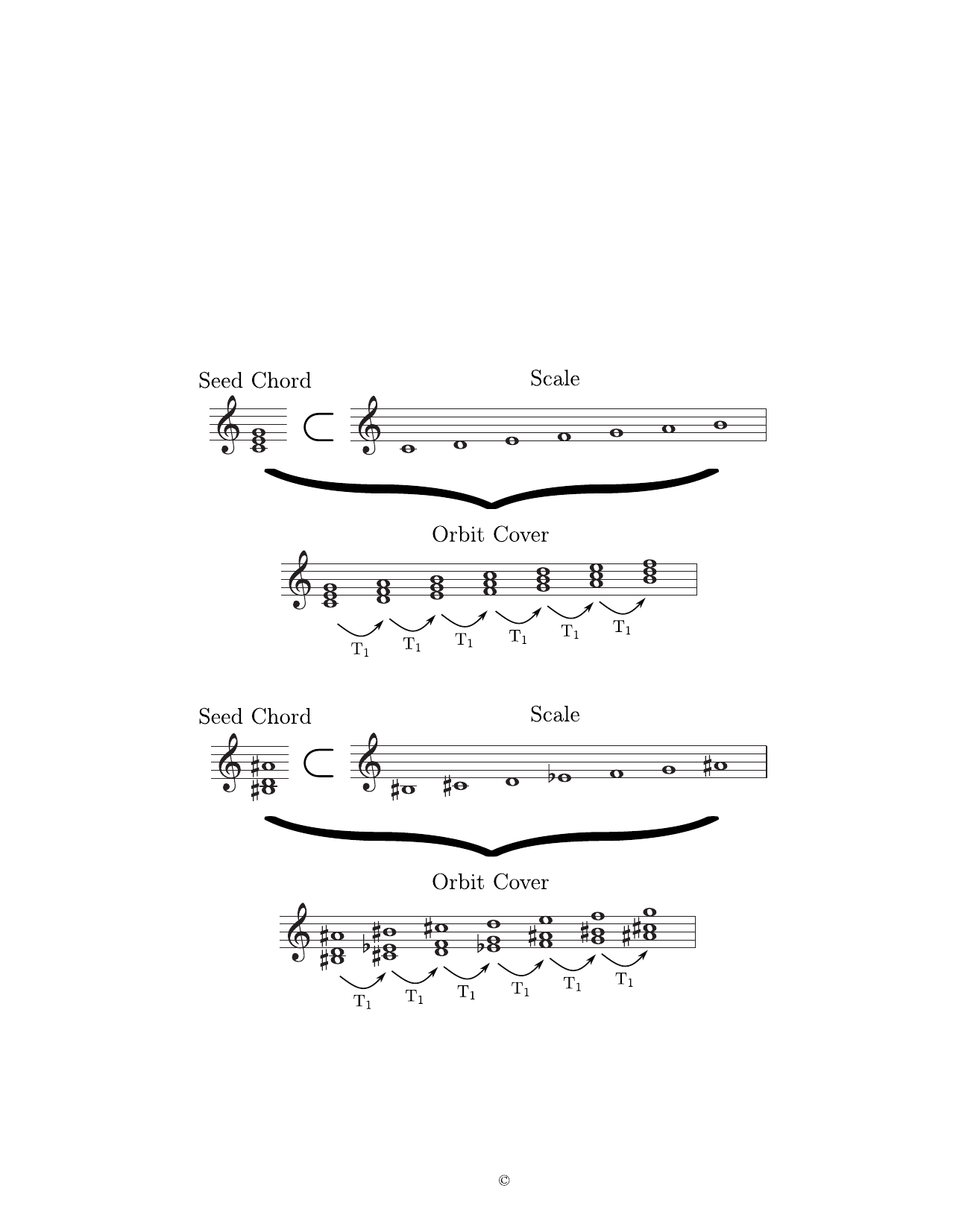}
    \caption{The orbit cover of the C-major scale generated by a tertian triad.}
\end{subfigure}
\hfill
\begin{subfigure}[t]{0.7\textwidth}
    \centering
    \includegraphics[width=\linewidth]{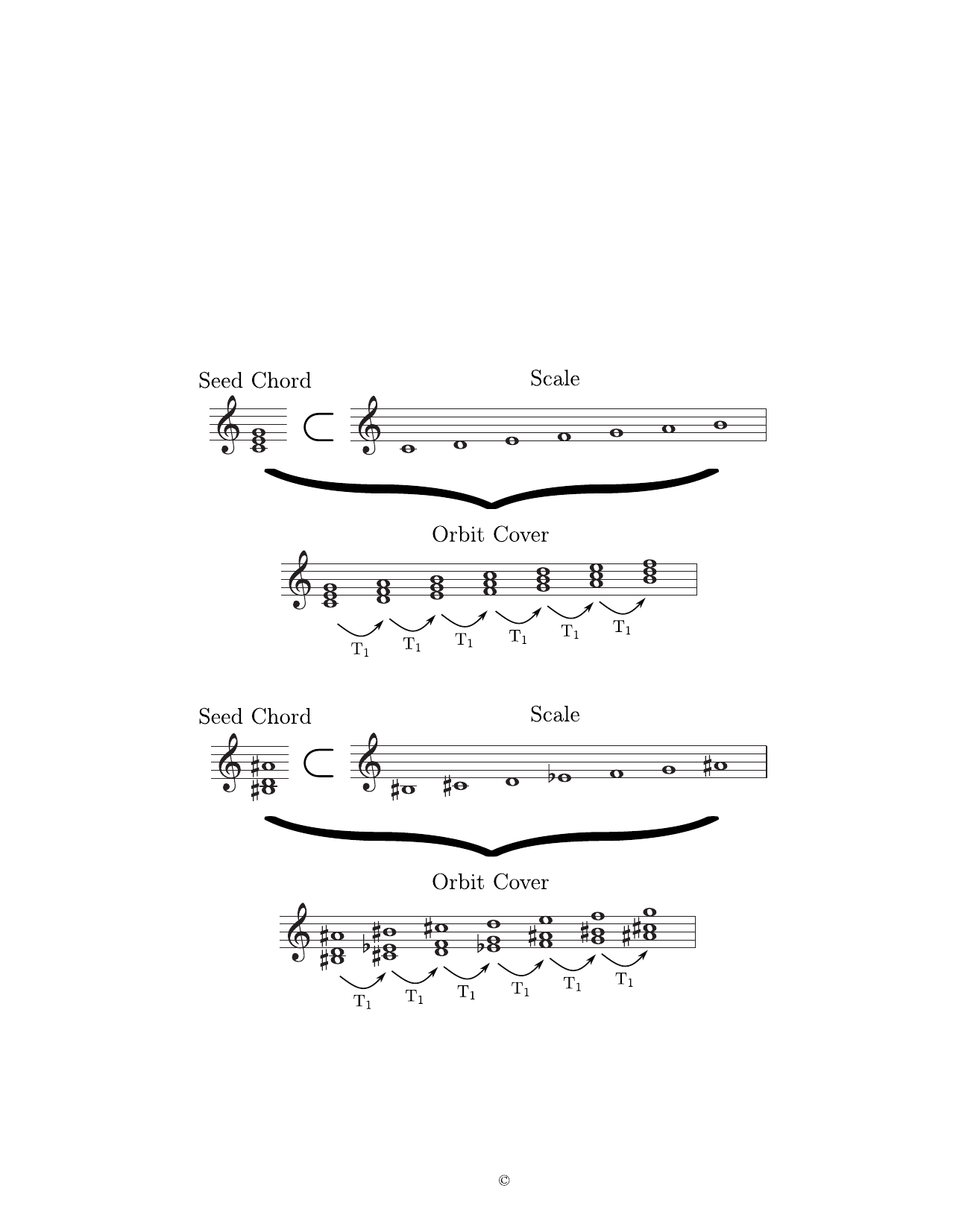}
    \caption{An orbit cover of a non-diatonic heptatonic scale generated by the subset consisting of the first, third, and seventh scale degrees.}
\end{subfigure}

\caption{Examples of scalar orbit covers. In each case, a subset (labeled ``seed chord'') of the scale is translated through the action of the scale's translation group, producing an orbit of subsets that covers the scale.}
\label{fig:orbit-cover-examples}
\end{figure}

The main subtlety in forming such a category is that scale and mode morphisms preserve the cyclic ordering of a scale, whereas orbit cover morphisms arise from arbitrary group homomorphisms of the scale-degree group and need not preserve this ordering. Consequently, orbit covers possess a notion of morphism that differs from that of scales or modes. We begin by defining the category of scalar orbit covers, and then show how to incorporate modal information.

\subsection{The Category of Scalar Orbit Covers}
\label{sub:the-category-of-scalar-orbit-covers}

In \cite{flieder2026scales}, orbit covers were classified with respect to their common-tone structure. Our objective here is to endow scalar orbit covers with the structure of a category $\ScaleOrbCov$ by introducing an appropriate notion of morphism. One consequence, established as Theorem~\ref{th:nerve-isomorphism} below, is that the common-tone structure exhibited by the chords of an orbit cover is captured by the isomorphism classes in $\ScaleOrbCov$. This recovers, in categorical language, the principal classification result of \cite[\S~4.2]{flieder2026scales}.

\begin{definition}[Scalar Orbit Cover]
Let $X$ be a scale with $|X| = n$, and let $\varnothing\neq A\subseteq X$. The \emph{scalar orbit cover} generated by $A$ is the collection
\[
X^{(A)} = \{\T_i(A)\mid i\in\Z_n\}.
\]
\end{definition}

To define morphisms between scalar orbit covers, note that every group homomorphism $h:\Z_n\to\Z_m$ induces, after choosing modal orderings $\mu_i:X\to\Z_n$ and $\mu_j:Y\to\Z_m$, a unique map $h_{i,j}:X\to Y$ making the diagram
\begin{equation}
\label{eq:h_ij}
\begin{tikzcd}
X && Y\\
\\
\Z_n && \Z_m
\arrow["{h_{i,j}}", from=1-1, to=1-3]
\arrow["{\mu_i}"', from=1-1, to=3-1]
\arrow["{\mu_j}", from=1-3, to=3-3]
\arrow["{ h}"', from=3-1, to=3-3]
\end{tikzcd}
\end{equation}
commute. This map transports $h$---a homomorphism of scale-degree groups---into the corresponding group homomorphism between the modes $(X,\oplus_{\mu_i})$ and $(Y,\oplus_{\mu_j})$.

\begin{remark}
Fix a modal ordering $\mu_i:X\to\Z_n$, and let $\mu:Y\to\Z_m$ denote the normal modal ordering on $Y$. Every modal ordering on $Y$ is uniquely of the form $\mu_j=\T_j\circ\mu$ for some $j\in\Z_m$ (Definition~\ref{def:mode}). Consequently, the data of a group homomorphism $h:\Z_n\to\Z_m$ together with a target modal ordering $\mu_j$ is equivalent to the data of an affine map
\[
a:\Z_n\longrightarrow \Z_m,\qquad a(x)=h(x)+j.
\]
\end{remark}

\begin{definition}[Morphisms of Scalar Orbit Covers]
\label{def:morphisms-Scaorb}
Let $X^{(A)}$ and $Y^{(B)}$ be scalar orbit covers. A \emph{morphism of scalar orbit covers} 
\[
h_{i,j}:X^{(A)} \longrightarrow Y^{(B)}
\] 
is such an induced map $h_{i,j} : X \to Y$ satisfying the property that, for every chord $A'\in X^{(A)}$, there exists a chord $B'\in Y^{(B)}$ such that $h_{i,j}(A')\subseteq B'$.
\end{definition}

Composition is associative, and identity maps are again morphisms. Scalar orbit covers therefore form a category, denoted $\ScaleOrbCov$.

\begin{example}
Let $X=\{0,2,4,5,7,9,11\}$ be the C-major scale. Let $A=\{0,4,7\}$ be the C-major triad, and let $B=\{0,5,9,11\}$. Define the group automorphism
\[
\begin{matrix}
h : & \Z_7 & \longrightarrow & \Z_7 \\
& x & \longmapsto & 5x \pmod 7.
\end{matrix}
\]
Then $h$ induces the map
\[
h_{1,1}:X\longrightarrow X
\]
between the Ionian modes given by
\[
\begin{matrix}
X & \xlongrightarrow{h_{1,1}} & X \\
0 & \longmapsto & 0 \\
2 & \longmapsto & 9 \\
4 & \longmapsto & 5 \\
5 & \longmapsto & 2 \\
7 & \longmapsto & 11 \\
9 & \longmapsto & 7 \\
11 & \longmapsto & 4.
\end{matrix}
\]
The image of the seed chord is
\[
h_{1,1}(A)=\{0,5,11\},
\]
which is contained in the subset $B=\{0,5,9,11\}$. Consequently,
\[
h_{1,1}:X^{(A)}\longrightarrow X^{(B)}
\]
extends to a morphism of scalar orbit covers, since for every chord $A'\in X^{(A)}$ there exists a chord $B'\in X^{(B)}$ such that $h_{1,1}(A')\subseteq B'$.

Figure~\ref{fig:scalar-orbit-cover-morphism} illustrates this morphism in musical notation. For each translated tertian triad in the source orbit cover, its image is contained in the corresponding translated quartal tetrachord of the target orbit cover.
\end{example}

\begin{figure}[ht]
\centering
\includegraphics[width=0.6\textwidth]{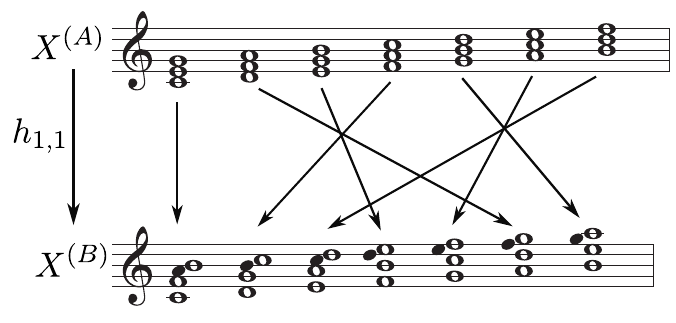}

\caption{A morphism of scalar orbit covers induced by the automorphism $k\mapsto5k$ of $\Z_7$. For each $k\in\Z_7$, the $k$th chord of the tertian orbit cover $X^{(A)}$ is mapped into the $(5k)$th chord of the quartal orbit cover $X^{(B)}$, where indices are taken modulo $7$. The filled noteheads indicate pitches of the target chord that are not contained in the image of the corresponding source chord. Thus, the image of each source chord is contained in its target chord, illustrating the defining condition $h_{1,1}(A_k)\subseteq B_{5k}$.}
\label{fig:scalar-orbit-cover-morphism}
\end{figure}

\subsection{The Category of Modal Orbit Covers}
\label{sub:the-category-of-modal-orbit-covers}

A scalar orbit cover records only the collection of translated chords, forgetting the choice of tonic and hence the underlying modal structure. To recover this information, we introduce \emph{modal orbit covers}, which enrich scalar orbit covers by equipping them with a mode.

\begin{definition}[Modal Orbit Cover]
Let $(X,\mu_i)$ be a mode with tonic $t\in X$, and let $\varnothing\neq A\subseteq X$ with $t\in A$. The corresponding \emph{modal orbit cover} is the pair $(X^{(A)},\mu_i)$.
\end{definition}

Morphisms of modal orbit covers are given in the same way as morphisms of scalar orbit covers (Definition~\ref{def:morphisms-Scaorb}).  Therefore modal orbit covers form a category, denoted $\ModeOrbCov$.

There are canonical forgetful functors
\[
U_{\ScaleOrbCov}:\ModeOrbCov\longrightarrow\ScaleOrbCov
\]
and
\[
U_{\Mode}:\ModeOrbCov\longrightarrow\Mode,
\]
which forget the chosen mode and the orbit cover, respectively.

\subsection{Rooted Orbit Covers}
\label{sub:rooted-orbit-covers}

The requirement that $t\in A$ endows every modal orbit cover with a canonical \emph{rooting}, which assigns a distinguished root pitch class to each translate $\T_i(A)$ in the orbit cover. This rooting further equips every chord with canonical degree and inversion data.

As shown in \cite{flieder2026scales}, $k$-chordal scalar orbit covers of an $n$-note scale are classified by interval compositions
\begin{equation}
\label{eq:int-comp-type}
\mathbf C(n,k)
=
\left\{
(\mathbf c_1,\ldots,\mathbf c_k) \in \Z_{>0}^k
\;\middle|\;
\sum_{j=1}^k \mathbf c_j=n
\right\}
\end{equation}
modulo cyclic rotation.\footnote{This classification has precedent in the work of Clough \cite{clough1979aspects}.} For a modal orbit cover $(X^{(A)},\mu_i)$, the subset $\mu_i(A)\subseteq\Z_n$ contains $0$. Traversing its elements in positive cyclic order beginning at $0$ therefore determines a unique interval composition $\mathbf c\in\mathbf C(n,k)$. Writing $\mathbf c=(\mathbf c_1,\ldots,\mathbf c_k)$, let
\begin{equation}
\label{eq:part-sums}
S_b=\sum_{j=1}^b\mathbf c_j,
\qquad
S_0=0.
\end{equation}
The interval composition $\mathbf c$ determines the chord
\begin{equation}
\label{eq:int-comp}
\hat{\mathbf c}(x)
=
\{ \T_{S_b}(x) \mid 0\le b<k \},
\end{equation}
and hence a bijection
\[
\begin{matrix}
\hat{\mathbf c} : & X& \longrightarrow & X^{(A)},\\
& x& \longmapsto & \hat{\mathbf c}(x).
\end{matrix}
\]
The inverse of this bijection gives the \emph{chordal degree map}
\begin{equation}
\label{eq:chordal-degree}
\deg
=
\mu_i\circ\hat{\mathbf c}^{-1} : X^{(A)} \longrightarrow \Z_n.
\end{equation}

Lastly, the interval composition $\mathbf c$ determines a canonical ordering of every chord,
\[
\bigl( x, \T_{S_1}(x), \dots, \T_{S_{k-1}}(x) \bigr),
\]
whose first element is the distinguished root. The inversions of a rooted chord are therefore obtained by cyclically rotating this ordered tuple, agreeing with the classical notion of chord inversion in tonal harmony.

\begin{example}
Let $X=\{0,2,4,5,7,9,11\}$ be the C-major scale, and let $A=\{0,4,7\}$ be the C-major triad. Then $(X^{(A)},\mu_1)$ is the C-major Ionian tertian orbit cover. The subset $\mu_1(A)=\{0,2,4\}\subset\Z_7$ has consecutive intervals $2$, $2$, $3$, and therefore determines the interval composition $\mathbf c=(2,2,3)$. Consequently, each rooted tertian triad with root note $x\in X$ is given by
\[
\hat{\mathbf c}(x)
=
\{x,\T_2(x),\T_4(x)\}.
\]

Now let $B=\{0,4,9\}$. Then $\mu_1(B)=\{0,2,5\}\subset\Z_7$ has consecutive intervals $2$, $3$, $2$, and therefore determines the interval composition $\mathbf d=(2,3,2)$. Accordingly, each rooted chord with root note $x\in X$ is given by
\[
\hat{\mathbf d}(x)
=
\{x,\T_2(x),\T_5(x)\}.
\]

Thus, $X^{(A)}=X^{(B)}$ as scalar orbit covers, since they consist of the same collection of translated chords. However, the modal orbit covers $(X^{(A)},\mu_1)$ and $(X^{(B)},\mu_1)$ are distinct, because they induce different interval compositions, $\mathbf c=(2,2,3)$ and $\mathbf d=(2,3,2)$, respectively. Thus, although the underlying collection of chords is identical, the distinguished root of each chord—and hence its associated degree and inversion data—differs.

Figure~\ref{fig:rooted-orbit-covers} compares the two modal orbit covers in musical notation. Although they determine the same scalar orbit cover, the different choices of rooted seed chord induce distinct interval compositions and therefore different root, degree, and inversion data.

\begin{figure}[ht]
\centering
\begin{subfigure}[t]{0.6\textwidth}
    \centering
    \includegraphics[width=\linewidth]{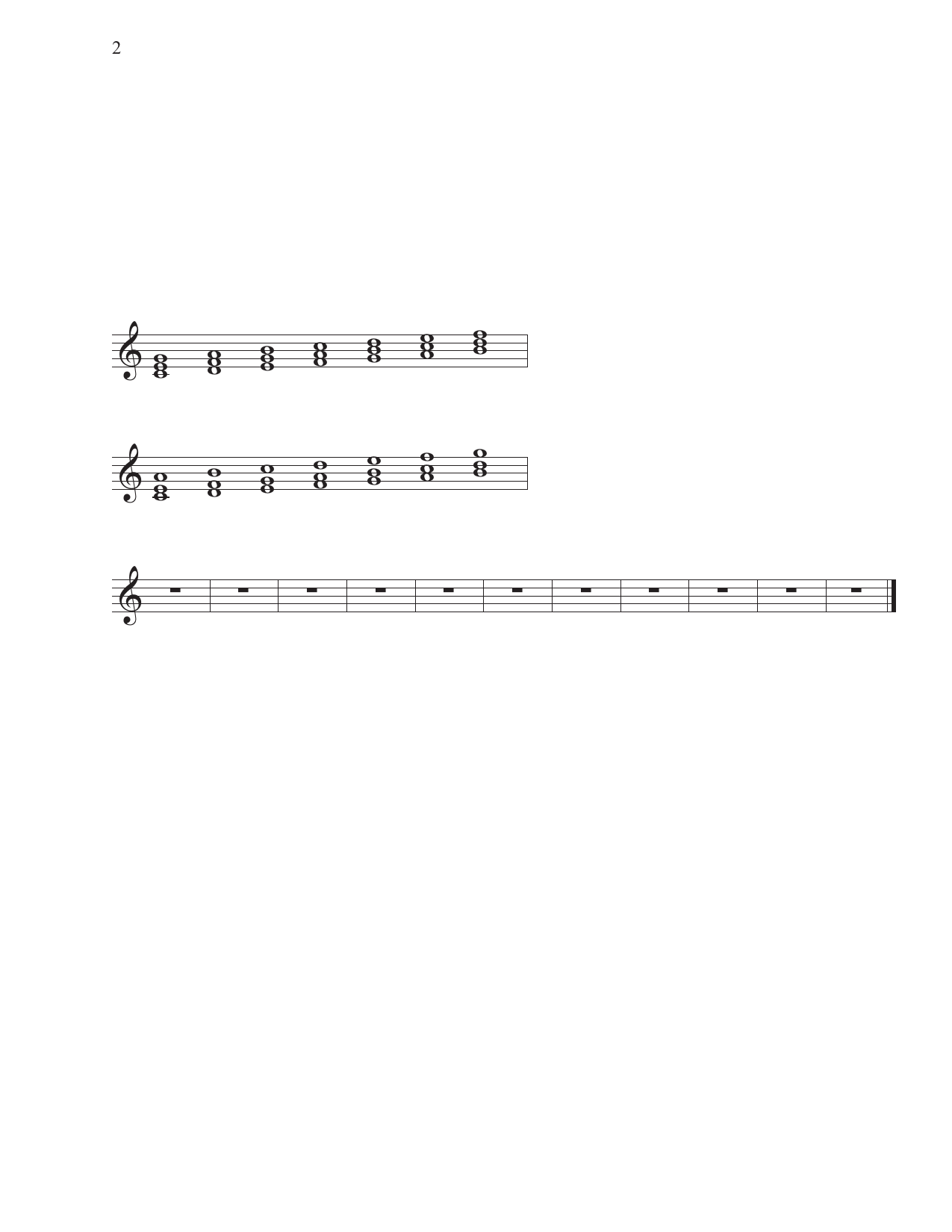}
    \caption{The rooted orbit cover $X^{(A)}$ determined by $\mathbf c=(2,2,3)$.}
\end{subfigure}
\hfill
\begin{subfigure}[t]{0.6\textwidth}
    \centering
    \includegraphics[width=\linewidth]{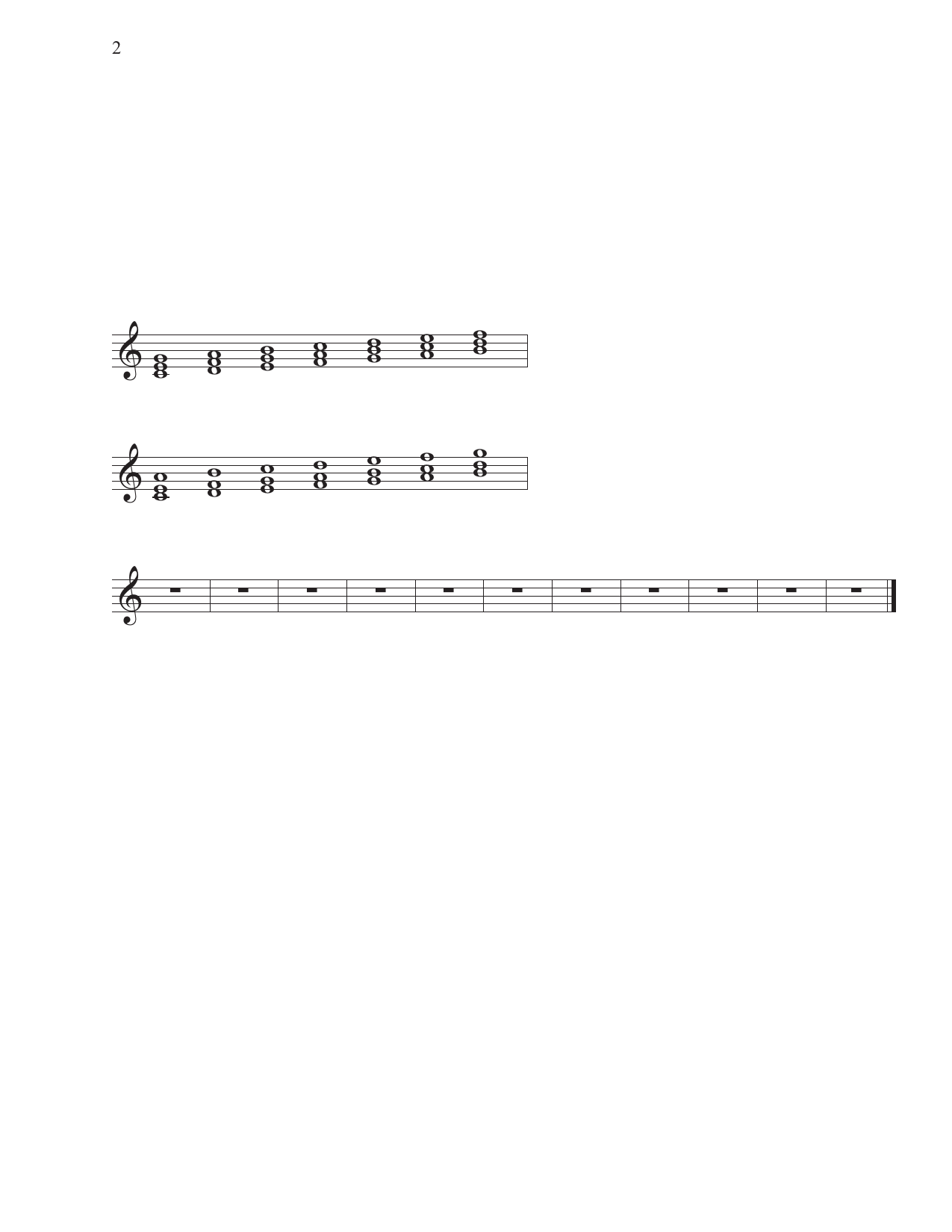}
    \caption{The rooted orbit cover $X^{(B)}$ determined by $\mathbf d=(2,3,2)$.}
\end{subfigure}
\caption{Two modal orbit covers having the same underlying scalar orbit cover but different rootings.}
\label{fig:rooted-orbit-covers}
\end{figure}
\end{example}

\subsection{Nerves of Orbit Covers}
\label{sub:nerves-of-orbit-covers}

While an orbit cover records a collection of chords, it does not record how those chords intersect. This common-tone structure is expressed by the nerve of the orbit cover.

\begin{definition}[Nerve]
Let $X^{(A)}$ be an orbit cover. Its \emph{nerve} is the simplicial complex whose vertices are the chords of $X^{(A)}$, and whose simplices are the finite collections of chords having nonempty intersection.
\end{definition}

Thus an edge of the nerve records a pair of chords sharing at least one pitch class, a triangle records three mutually intersecting chords, and higher-dimensional simplices record larger collections of chords with a common tone.

\begin{example}
\label{ex:diatonic-nerve}
Figure~\ref{fig:diatonic-nerve} shows the nerve of the triadic orbit cover of the C-major scale. The seven vertices correspond to the seven diatonic triads, while an edge joins two triads whenever they share a pitch class. For example, the triads $\{\mathrm{C},\mathrm{E},\mathrm{G}\}$ and $\{\mathrm{E},\mathrm{G},\mathrm{B}\}$ intersect in the pitches $\mathrm{E}$ and $\mathrm{G}$, and therefore are joined by an edge.

More generally, each pitch class determines a triangle. For instance, the pitch
class $\mathrm{C}$ belongs to the triads $\{\mathrm{C},\mathrm{E},\mathrm{G}\}$,  $\{\mathrm{A},\mathrm{C},\mathrm{E}\}$, and $\{\mathrm{F},\mathrm{A},\mathrm{C}\}$, so these three vertices span one of the shaded triangles in the figure. Similarly, every pitch class of the scale determines a unique maximal simplex, giving a total of seven triangles.
\end{example}

\begin{figure}[ht]
\centering
\includegraphics[width=0.65\textwidth]{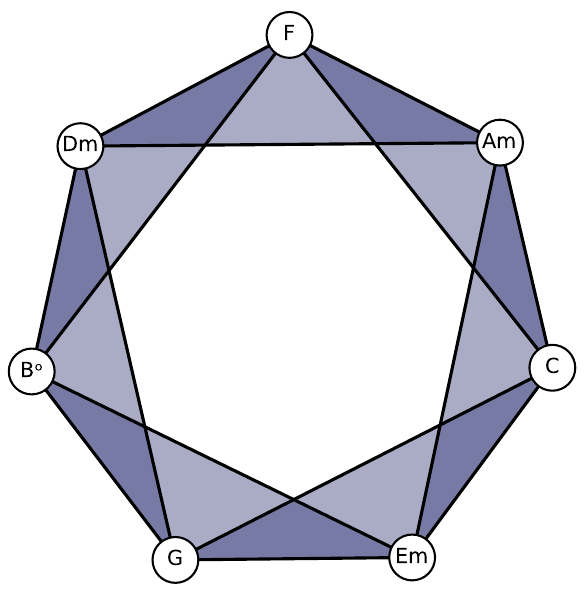}
\caption{The nerve of the C-major triadic orbit cover. Vertices correspond to
the seven triads of the cover, while each shaded triangle represents the three
triads containing a common pitch class.}
\label{fig:diatonic-nerve}
\end{figure}

Example~\ref{ex:diatonic-nerve} suggests the following general phenomenon for rooted orbit covers.

\begin{proposition}
Let $(X^{(A)},\mu_i)$ be a rooted $k$-chordal orbit cover. Then each pitch class $x\in X$ determines a unique $(k-1)$-simplex of $N(X^{(A)})$, namely the simplex whose vertices are the chords containing $x$.
\end{proposition}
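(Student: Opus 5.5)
Work in scale-degree coordinates: transport everything along the bijection $\mu_i : X \to \Z_n$, so that $X$ becomes $\Z_n$, the translations $\T_j$ become $m \mapsto m+j$, and the seed $A$ becomes $\mu_i(A)=\{S_0,S_1,\ldots,S_{k-1}\}$ with $S_0=0$, using the partial sums \eqref{eq:part-sums} of the interval composition $\mathbf c\in\mathbf C(n,k)$. Through the rooting bijection $\hat{\mathbf c}$, every chord of $X^{(A)}$ is $\hat{\mathbf c}(y)=\{y+S_b \mid 0\le b<k\}$ for a unique root $y\in\Z_n$. Since $\hat{\mathbf c}$ is a bijection, the vertices of $N(X^{(A)})$ correspond exactly to the $n$ roots.

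Next, fix a pitch class $x$ and list the chords containing it. We have $x\in\hat{\mathbf c}(y)$ exactly when $y=x-S_b$ for some $b$, so the chords containing $x$ are $\hat{\mathbf c}(x-S_b)$ for $0\le b<k$. The roots $x-S_b$ are pairwise distinct because $0=S_0<S_1<\cdots<S_{k-1}<n$, so these are $k$ distinct vertices. They share the common element $x$, so by the definition of the nerve they span a simplex of dimension $k-1$.

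Uniqueness follows from the construction: the vertex set of this simplex is determined as the set of all chords containing $x$, so $x$ determines exactly one such simplex. It is also maximal among simplices whose chords all contain $x$.

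The main point needing care is distinctness of the $k$ chords. This depends on $\hat{\mathbf c}$ being a bijection, i.e.\ on distinct roots giving distinct chords. That would fail if $\mu_i(A)$ were periodic in $\Z_n$, since then $X^{(A)}$ has fewer than $n$ members. For a rooted orbit cover I will therefore use that $\hat{\mathbf c}$ is a bijection, as asserted in \S~\ref{sub:rooted-orbit-covers}, and treat chords indexed by roots as distinct vertices. I will add a remark that the statement should be read with this convention in the periodic case.
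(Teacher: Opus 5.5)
Your argument is correct and is essentially the paper's proof made explicit. The paper observes that, since every chord is a translate of $A$, each pitch class lies in exactly $k$ chords, which share $x$ and so span a $(k-1)$-simplex; your computation of these chords as $\hat{\mathbf c}(x-S_b)$ with pairwise distinct roots is what justifies that count. Your caveat about periodic seeds is well placed, since the paper's count silently assumes the translates $\T_j(A)$ are pairwise distinct, which fails exactly when $\mu_i(A)$ is periodic (for $n=7$ this happens only for $A=X$, where $X^{(A)}=\{X\}$ and the nerve is a single vertex).
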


\begin{proof}
Since every chord is a translate of the generating $k$-chord $A$, each pitch class belongs to exactly $k$ chords of the orbit cover. These chords have nonempty intersection, namely $\{x\}$, and therefore determine a $(k-1)$-simplex of the nerve.
\end{proof}

Thus the top-dimensional simplices of the nerve are canonically indexed by the pitch classes of the underlying scale.

Every morphism of scalar orbit covers induces a simplicial map between the corresponding nerves. Thus the nerve construction defines a functor
\[
N:\ScaleOrbCov\longrightarrow\mathbf{SimpComp}.
\]

\begin{theorem}[{\cite[\S~4.2]{flieder2026scales}}]
\label{th:nerve-isomorphism}
Two scalar orbit covers are isomorphic in $\ScaleOrbCov$ if and only if their nerves are isomorphic as simplicial complexes.
\end{theorem}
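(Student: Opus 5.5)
The plan is to reduce both sides of the equivalence to a statement about the seed subsets $\mu_i(A)\subseteq\Z_n$ and $\mu_j(B)\subseteq\Z_m$, and then compare the two reduced statements.

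\textbf{Step 1: isomorphisms in $\ScaleOrbCov$ are affine maps carrying seed to seed.} Let $h_{i,j}:X^{(A)}\to Y^{(B)}$ be an isomorphism. Its underlying map is induced by a group homomorphism $h:\Z_n\to\Z_m$, and it has an inverse of the same kind. So $h$ is a group isomorphism: $n=m$ and $h(x)=ux$ with $u\in\Z_n^\times$. By the Remark preceding Definition~\ref{def:morphisms-Scaorb}, on scale-degree coordinates $h_{i,j}$ is then an affine bijection $a(x)=ux+c$.

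The morphism condition gives $a(\mu_i(A))\subseteq \T_r(\mu_j(B))$ for some $r$, so $|A|\le|B|$. Applying the same argument to the inverse gives $|B|\le |A|$. Hence $a$ sends the seed onto a translate of the seed. Because $a$ commutes with translations up to the multiplier $u$, it then maps the chords of $X^{(A)}$ bijectively onto the chords of $Y^{(B)}$. Conversely, any affine bijection with this property induces an isomorphism. In summary, the two covers are isomorphic iff $\mu_j(B)$ is an affine image $u\,\mu_i(A)+c$ with $u$ a unit.

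\textbf{Step 2: the forward direction.} This is immediate from Step 1. A bijection on chords that is induced by a bijection on pitch classes preserves the property ``having nonempty common intersection''. So it is a simplicial isomorphism, which is just the functor $N$ applied to an isomorphism.

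\textbf{Step 3: describe the nerve explicitly.} Index the chord $\T_s(A)$ by $s\in\Z_n$, and write $A_0=\mu_i(A)$. The chords containing the pitch class of degree $x$ are exactly those indexed by $x-A_0$. Every simplex consists of chords sharing some common tone, so it lies inside one of the sets $x-A_0$. These $n$ sets are pairwise distinct translates, so the facets of $N(X^{(A)})$ are precisely the translates of $-A_0$; this refines the Proposition above.

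A simplicial isomorphism $N(X^{(A)})\cong N(Y^{(B)})$ therefore amounts to the following data. First, the vertex counts agree, so $n=m$. Second, there is a bijection $\sigma$ of $\Z_n$ carrying the family $\{x-A_0\}_{x}$ onto the family $\{y-B_0\}_{y}$. In other words, the two cyclic hypergraphs generated by $-A_0$ and $-B_0$ are isomorphic, and in particular $|A|=|B|$ from the facet dimension.

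\textbf{Step 4: the converse, which is the main obstacle.} We must upgrade an arbitrary hypergraph isomorphism $\sigma$ to an affine one $x\mapsto ux+c$; by Step 1 that gives the required isomorphism in $\ScaleOrbCov$. This is exactly the \emph{CI (Cayley isomorphism) property} of $\Z_n$ for cyclic set systems.

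I would argue as follows. The translation group $\Z_n$ acts regularly on the first hypergraph as a group of automorphisms. Conjugating by $\sigma$ turns the regular translation group of the second hypergraph into a second regular cyclic subgroup of the automorphism group of the first. It suffices to show that these two regular cyclic subgroups are conjugate inside that automorphism group. Once they are, we may modify $\sigma$ by an automorphism so that it normalizes $\Z_n$. A permutation of $\Z_n$ normalizing the translation group is affine, which finishes the proof.

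For $n=7$, and more generally for $n$ prime, conjugacy follows from Sylow's theorem. A regular cyclic subgroup of order $p$ is a Sylow $p$-subgroup of the automorphism group, because $p^2\nmid p!$, and Sylow $p$-subgroups are conjugate. For general $n$ I would invoke Pálfy's theorem, which says $\Z_n$ is CI for all such combinatorial structures exactly when $\gcd(n,\varphi(n))=1$ or $n=4$.

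So the proof as planned is complete in the heptatonic case $n=7$. For other $n$ I expect the statement may require this arithmetic hypothesis, and I would check whether the classification in \cite[\S~4.2]{flieder2026scales} implicitly assumes it.
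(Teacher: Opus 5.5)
Your proposal is correct for two covers over $7$-note scales, but it takes a genuinely different route from the paper. The paper's proof is a one-line reduction. It identifies isomorphisms in $\ScaleOrbCov$ with the automorphisms of $\Z_n$ treated in \cite{flieder2026scales} and declares the theorem a restatement of the classification in \S~4.2 there, so no argument about nerves appears at all.

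You reconstruct that content directly:
\begin{itemize}
\item Step~1 is the isomorphism case of Theorem~\ref{th:main-morphism-characterization}, with the cardinality squeeze upgrading $f(A)\subseteq B$ to equality.
\item Step~3 identifies the facets of the nerve with the translates of $-\mu_i(A)$.
\item Step~4 recognizes the converse as a Cayley-isomorphism problem, which in prime order is settled by the Sylow argument (the classical Bays--Lambossy theorem).
\end{itemize}
What this buys is transparency: the nontrivial direction uses that $7$ is prime, which the citation hides.

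Two repairs are needed, and they also show that your restriction to $n=7$ is not merely a convenience.
\begin{itemize}
\item Indexing chords by $s\mapsto\T_s(A)$, and asserting that the $n$ facets are distinct, requires $A$ to be aperiodic. So the case $A=X$, whose nerve is a single vertex, must be handled separately.
\item The inference ``vertex counts agree, so $n=m$'' fails in general. The singleton seed in the augmented triad $\{0,4,8\}$ and the seed $\{0,6\}$ in the whole-tone scale both yield three isolated vertices, and $A=X$, $B=Y$ always yield a point. Since an isomorphism in $\ScaleOrbCov$ needs a bijection $X\to Y$, the theorem as literally stated is false across scale sizes and needs $|X|=|Y|$.
\end{itemize}

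Finally, Pálfy's theorem concerns all relational structures. So $\gcd(n,\varphi(n))\neq 1$ does not by itself produce a counterexample among single-base-block developments. For composite $n$ your argument leaves the converse open rather than refuted, and your hedged wording correctly reflects this.
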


\begin{proof}
By Definition~\ref{def:morphisms-Scaorb}, isomorphisms in $\ScaleOrbCov$ are precisely the group automorphisms considered in \cite{flieder2026scales}. The theorem is therefore an immediate reformulation of the results established in \cite[\S~4.2]{flieder2026scales}.
\end{proof}

Thus, the common-tone topology of an orbit cover is completely determined by its isomorphism class in $\ScaleOrbCov$.

\section{A Hierarchy of Fibrations over Heptatonic Scales}
\label{sec:a-hierarchy-of-fibrations-over-heptatonic-scales}

The preceding sections introduced the principal musical structures studied in this paper. We first developed the theory of modes and scales, then restricted attention to seven-note scales and their three classes of isomorphisms. We next introduced orbit covers, which consist of harmonic subsets of a scale, and showed that they naturally form categories. Finally, we observed that the nerve construction captures the common-tone topology of an orbit cover.

We now assemble these constructions, along with some additional ones, into the foundational framework of the paper. Our objective is to build a hierarchy of fibrations over the category of heptatonic scale types and their deformations, with each successive layer adding further musical structure. Beginning with translation classes of scales, we equip them successively with action groupoids, orbit covers, deformation families, and harmonic functions. The resulting hierarchy culminates in a category that simultaneously encodes scale types, modes, orbit covers, chromatic alterations, and harmonic functions (Figure~\ref{fig:hierarchy-of-fibrations}).

Specifically, over each scale type we form an action-groupoid category, generalizing the common-practice operations of transposing between diatonic scales and shifting among their modes---for instance, transposing to the dominant and then moving to the relative minor. Over each mode in this fiber, we then assign orbit-cover data, as in common-practice harmony, where tertian triads are built over each diatonic mode. Next, we equip deformation families, which formalize the alteration of scale tones: raising the seventh degree in minor to obtain a leading tone, lowering the second and sixth degrees in major to obtain a Neapolitan chord, raising the fourth degree in major to obtain a secondary dominant, and so on. Finally, we assign harmonic functions, generalizing how common-practice tonality labels the chords of a key as tonic, subdominant, or dominant. This hierarchy of fibrations thus encodes all of these levels of structure coherently within a single total category, forming the basis for the ultimate object of the paper: the formalization of \emph{harmonic practices} in \S~\ref{sec:formalizing-harmonic-practices}---a general theory from which common-practice harmony arises as a special case.

\begin{figure}[h]
\centering
\[\begin{tikzcd}[row sep=large]
	{\text{Functional harmonic vocabularies}} \\
	{\text{Harmonic vocabularies}} \\
	{\text{Orbit covers}} \\
	{\text{Modes}} \\
	{\text{Scale types}}
	\arrow[from=1-1, to=2-1, "\S~\ref{sub:harmonic-function-fibrations}"']
	\arrow[from=2-1, to=3-1, "\S~\ref{sub:the-deformation-family-fibrations}"']
	\arrow[from=3-1, to=4-1, "\S~\ref{sub:orbit-cover-fibrations}"']
	\arrow[from=4-1, to=5-1, "\S~\ref{sub:action-groupoid-fibrations}"']
\end{tikzcd}\]
\caption{The hierarchy of fibrations built in this section, read from base to summit. A choice of scale instance and tonic turns a scale type into a mode (\S~\ref{sub:action-groupoid-fibrations}, the action-groupoid fibrations). A choice of generating chord turns a mode into an orbit cover (\S~\ref{sub:orbit-cover-fibrations}, the orbit-cover fibrations). A choice of admissible chromatic alterations---a \emph{deformation family}---turns an orbit cover into a harmonic vocabulary (\S~\ref{sub:the-deformation-family-fibrations}, the deformation-family fibration), and a choice of functional assignment labels the chords of a harmonic vocabulary with harmonic functions, yielding a functional harmonic vocabulary (\S~\ref{sub:harmonic-function-fibrations}, the harmonic-function fibrations). Each downward arrow is the forgetful projection of the corresponding fibration, recovering the layer beneath.}
\label{fig:hierarchy-of-fibrations}
\end{figure}

For the remainder of this section, we restrict attention to seven-note scales and modes in $\Z_{12}$, since these are of particular musical interest. The constructions below, however, generalize in the evident way to $n$-note scales and modes.

Let $\Hept$ denote the set of heptatonic pitch-class subsets of $\Z_{12}$, and let $\Hept/\mathbf T$ denote the set of their translation orbits. For each $[X]\in\Hept/\mathbf T$, recall (Definition~\ref{def:deformation-spaces}) the type $\Def(X)$ of deformations of $[X]$. By Proposition~\ref{prop:equivariance}, every deformation $\Delta\in\Def(X)$ extends uniquely to a map $\Delta:[X] \to [Y]$ between translation orbits. Closing these maps under composition yields the category $\HDef$, whose objects are the translation orbits in $\Hept/\mathbf T$ and whose morphisms are the resulting deformation maps.

The remainder of this section constructs the hierarchy of fibrations over $\HDef$.

\subsection{Grothendieck Fibrations}
\label{sub:grothendieck-fibrations}

Let us briefly recall the notion of a Grothendieck fibration.\footnote{For fuller accounts, see \cite{borceux1994handbook,jacobs1999categorical}.} Suppose $\mathcal C$ is a category, and let $F:\mathcal C\to\Cat$ be a functor assigning to each $C\in\operatorname{Ob}(\mathcal C)$ a category $F(C)$, and to each morphism $f:C\to C'$ a functor $F(f):F(C)\to F(C')$. One may think of $F$ as assigning an additional layer of structure to each object of $\mathcal C$. An object $X\in\operatorname{Ob}(F(C))$ represents a choice of additional structure attached to the base object $C$, while a morphism $f:C\to C'$ transports this structure via the functor $F(f)$. Thus every pair $(C,X)$, where $X\in\operatorname{Ob}(F(C))$, is carried by $f$ to the pair $(C',F(f)(X))$. Morphisms within the category $F(C)$ alter the additional structure while leaving the underlying object $C$ fixed.

The Grothendieck construction assembles all such pairs into a single category, whose objects consist of base objects together with their chosen additional structure, and whose morphisms simultaneously transport both the underlying objects and their attached structure. The resulting projection back to the base category is called a \emph{Grothendieck fibration}.

Given a functor $F:\mathcal C\to\Cat$, we denote its Grothendieck construction by $\int F$, yielding a fibration $\int F\to\mathcal C$. For each object $C\in\operatorname{Ob}(\mathcal C)$, the fiber over $C$ is canonically equivalent to the category $F(C)$.

\begin{example}
The first level of the hierarchy constructed below arises this way. Let $\Gpd$ denote the category of groupoids, and define $\Act_\bfT:\HDef\to\Gpd$ by sending each translation orbit $[X]$ to its translation action groupoid $[X]\sslash\bfT$ (Definition~\ref{def:action-groupoid}), and each deformation $\Delta:[X]\to[Y]$ to the induced functor between action groupoids. The Grothendieck construction then yields a fibration
\[
P_\bfT:\int\ActT\longrightarrow\HDef,
\]
whose objects are pairs $([X],X')$ with $X'\in[X]$, and whose fiber over $[X]$ is the action groupoid $[X]\sslash\bfT$. A base morphism $\Delta:[X]\to[Y]$ lifts to a functor $\Delta$ between the corresponding fibers (see Figure~\ref{fig:act-groupoid-fibration}). This fibration, together with three further fibrations built the same way, is developed in detail below (\S~\ref{sub:action-groupoid-fibrations}--\ref{sub:harmonic-function-fibrations}).
\end{example}

\begin{sidewaysfigure}
\centering
\includegraphics[width=1\textheight]{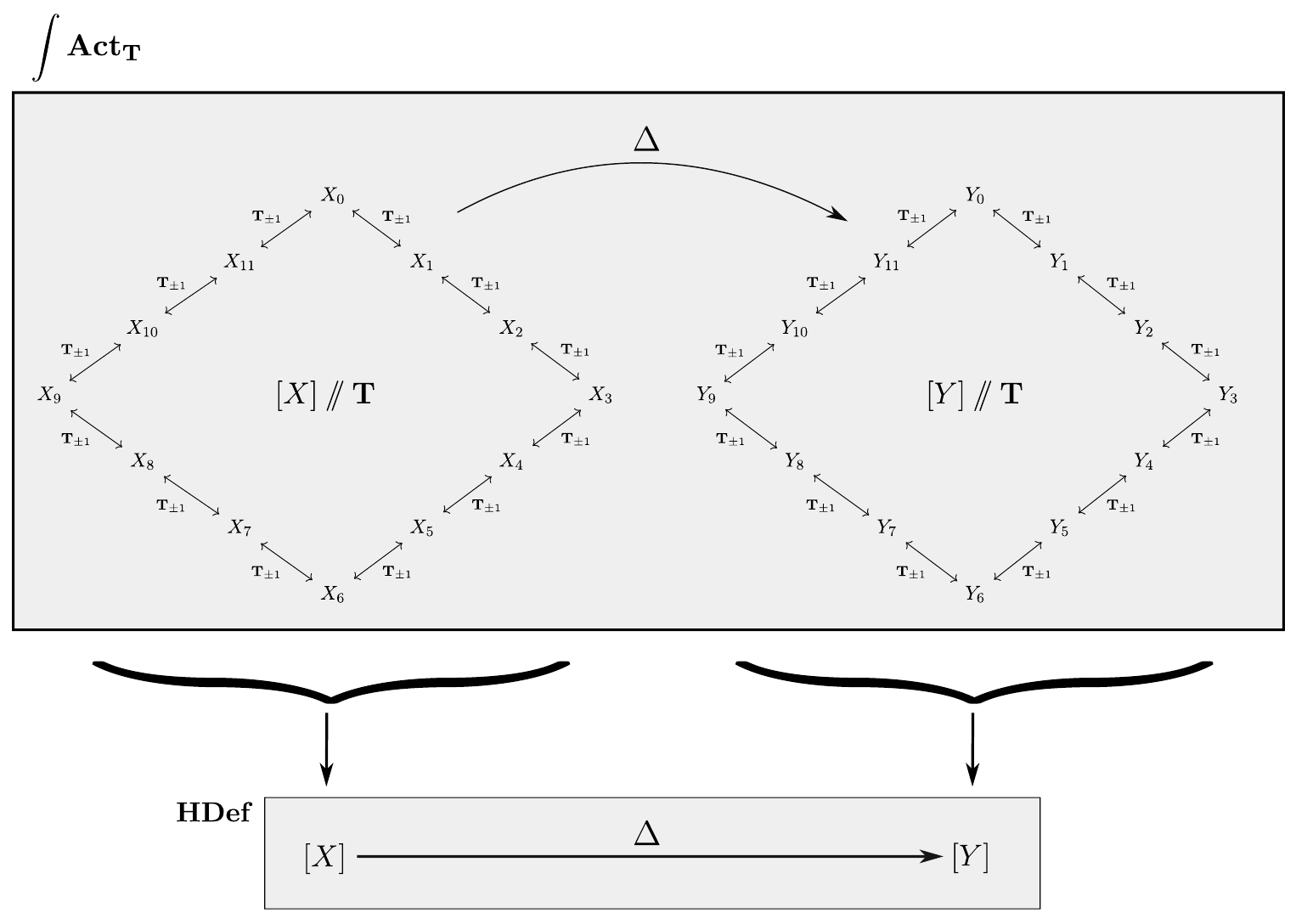}
\caption{The fibration $P_\bfT:\int\Act_\bfT\to\HDef$. The fibers over $[X]$ and $[Y]$ are the translation action groupoids $[X]\sslash\bfT$ and $[Y]\sslash\bfT$, each a 12-cycle under $\bfT_{\pm1}$. A morphism $\Delta:[X]\to[Y]$ in the base $\HDef$ lifts to the functor $\Delta$ between fibers, which the Grothendieck construction assembles into the total category $\int\Act_\bfT$.}
\label{fig:act-groupoid-fibration}
\end{sidewaysfigure}

Applying this construction repeatedly to $\HDef$ yields the hierarchy of fibrations shown in Figure~\ref{fig:fibration-hierarchy}, in which each level enriches the preceding one with additional musical structure. Reading from bottom to top, the hierarchy successively equips each scale class with action groupoids, orbit covers, deformation families, and harmonic functions.

\begin{figure}[!h]
\[\begin{tikzcd}
	&& {\int \Func} && \\
	\\
	&& {\int \DefFam} \\
	\\
	&& {\int \OrbCovTR} \\
	\\
	{\int \OrbCovT} \\
	&& {\int \ActTR} \\
	\\
	{\int \ActT} &&&& {\int \ActR} \\
	\\
	&& \HDef
	\arrow["{P_\Func}", from=1-3, to=3-3]
	\arrow["{P_{\OrbCovTR}}", from=3-3, to=5-3]
	\arrow["{\int\rho_\bfT}"', from=5-3, to=7-1]
	\arrow["{P_{\ActTR}}", from=5-3, to=8-3]
	\arrow["{P_{\ActT}}"', from=7-1, to=10-1]
	\arrow["{\int\pi_\bfT}"', from=8-3, to=10-1]
	\arrow["{\int\pi_\bfR}", from=8-3, to=10-5]
	\arrow["{P_{\bfT \times \bfR}}", from=8-3, to=12-3]
	\arrow["{P_\bfT}"', from=10-1, to=12-3]
	\arrow["{P_\bfR}", from=10-5, to=12-3]
\end{tikzcd}\]
\caption{The hierarchy of fibrations constructed in this paper. Each level enriches the preceding one with additional musical structure.}
\label{fig:fibration-hierarchy}
\end{figure}

\subsection{Action Groupoid Fibrations}
\label{sub:action-groupoid-fibrations}

We begin by recalling the notion of an action groupoid.

\begin{definition}[Action Groupoid]
\label{def:action-groupoid}
Given a set $S$ and a group action $\alpha:G\times S\to S$, the \emph{action groupoid} $S\sslash G$ is the category whose objects are the elements $s\in S$ and whose morphisms $g:s\to t$ are the group elements satisfying $\alpha(g,s)=t$.
\end{definition}

We now define the action-groupoid fibrations over $\HDef$.

\subsubsection{The Translation-Action-Groupoid Fibration}
\label{subsub:the-translation-action-groupoid-fibration}

We define the functor
\[
\Act_{\mathbf T}:\HDef\longrightarrow\Gpd
\]
as follows.

\begin{enumerate}
\item On objects, it sends each translation orbit $[X]\in\Hept/\mathbf T$ to the translation action groupoid  $[X]\sslash\mathbf T$.

\item On morphisms, it sends each deformation map $\Delta:[X]\to[Y]$ to the induced deformation functor $\Delta:[X]\sslash\mathbf T \to [Y]\sslash\mathbf T$,
defined by:
\begin{enumerate}
\item on objects, $X' \to \Delta(X')$;
\item on morphisms, it acts as the identity.
\end{enumerate}
\end{enumerate}

The functor $\Act_\bfT$ gives rise, via the Grothendieck construction, to the fibration
\[
P_\bfT : \int \ActT \longrightarrow \HDef. 
\]
An object of $\int\Act_\bfT$ is a pair $([X],Y)$, where $Y\in[X]$. A morphism
\[
(\Delta,\bfT_p):([X],Y)\longrightarrow([W],Z)
\]
consists of a deformation $\Delta:[X]\to[W]$ together with a translation satisfying 
\[
\Delta(\bfT_p(Y))=Z.
\]
The fiber over $[X]$ is therefore the translation action groupoid $[X]\sslash\bfT$. The deformation component $\Delta$ of a morphism induces the corresponding map between translation action groupoids.

\subsubsection{The Rotation-Action-Groupoid Fibration}
\label{subsub:the-rotation-action-groupoid-fibration}

We define the functor
\[
\Act_{\bfR}:\HDef\longrightarrow\Gpd
\]
as follows.

\begin{enumerate}
\item On objects, it sends each translation orbit $[X]\in\Hept/\bfT$ to the rotation action groupoid $\Z_7\sslash\bfR$, whose objects are the modal degrees $i\in\Z_7$, and whose morphisms are the rotations $\bfR_q:i\to i+q$.

\item On morphisms, it sends each deformation map $\Delta:[X]\to[Y]$ to the induced deformation functor $\Delta:\Z_7\sslash\bfR \to \Z_7\sslash\bfR$, defined by:
\begin{enumerate}
\item on objects, $i \to \delta(i)$, where $\delta:\Z_7\to\Z_7$ is the translation of modal degrees induced by $\Delta$ (see \eqref{eq:delta});

\item on morphisms, it acts as the identity. 
\end{enumerate}
\end{enumerate}

The functor $\Act_{\bfR}$ gives rise, via the Grothendieck construction, to the fibration
\[
P_\bfR:\int\Act_{\bfR}\longrightarrow\HDef.
\]
An object of $\int\Act_{\bfR}$ is a pair $([X],i)$, which specifies the $i$th mode of every scale in the translation orbit $[X]$. A morphism
\[
(\Delta,\bfR_q):([X],\mu_i)\longrightarrow([Y],\mu_j)
\]
consists of a deformation $\Delta:[X]\to[Y]$ together with a rotation satisfying 
\[
\delta(\bfR_q(i)) = \delta(i + q) = j.
\]
The fiber over $[X]$ is therefore the rotation action groupoid $\Z_7\sslash\bfR$. The deformation component $\Delta$ of a morphism induces the corresponding map between rotation action groupoids.

\subsubsection{The Translation-Rotation-Action-Groupoid Fibration}
\label{subsub:the-translation-rotation-action-groupoid-fibration}

This fibration combines the translation- and rotation-action-groupoid fibrations. We define the functor
\[
\Act_{\bfT\times\bfR}:\HDef\longrightarrow\Gpd
\]
as follows.

\begin{enumerate}
\item On objects, it sends each translation orbit $[X]\in\Hept/\bfT$ to the action groupoid 
\[
([X]\times\Z_7)\sslash(\bfT\times\bfR),
\] 
which is canonically isomorphic to the product $\Act_{\bfT}([X])\times\Act_{\bfR}([X])$. Its objects may therefore be identified with the modes $(X',\mu_i)$, where $X'\in[X]$ and $i\in\Z_7$. A morphism is a pair
\[
(\bfT_p,\bfR_q) : (X',\mu_i) \longrightarrow (\bfT_p(X'),\mu_{i+q}).
\]

\item On morphisms, it sends each deformation map $\Delta:[X] \to [Y]$ to the induced deformation functor
\[
\Delta:
([X]\times\Z_7)\sslash(\bfT\times\bfR)
\longrightarrow
([Y]\times\Z_7)\sslash(\bfT\times\bfR),
\]
defined by:
\begin{enumerate}
\item on objects, $(X',\mu_i) \to (\Delta(X'),\mu_{\delta(i)})$, where $\delta:\Z_7\to\Z_7$ is the translation of scale degrees induced by~$\Delta$;

\item on morphisms, it acts as the identity. Indeed, since $\delta(i+q)=\delta(i)+q$, we have
\[
\Delta(\bfT_p,\bfR_q)
=
(\bfT_p,\bfR_q):
(\Delta(X'),\mu_{\delta(i)})
\longrightarrow
(\Delta(\bfT_p(X')),\mu_{\delta(i+q)}).
\]
\end{enumerate}
\end{enumerate}

The functor $\Act_{\bfT\times\bfR}$ gives rise, via the Grothendieck construction, to the fibration
\[
P_{\bfT \times \bfR} : \int\Act_{\bfT\times\bfR} \longrightarrow \HDef.
\]
Moreover, there are natural projection functors
\[
\int \pi_{\bfT}: \int\Act_{\bfT\times\bfR} \longrightarrow \int\Act_{\bfT}
\]
and
\[
\int \pi_{\bfR}:\int\Act_{\bfT\times\bfR}\longrightarrow\int\Act_{\bfR},
\]
defined on fibers by 
\[ 
([X]\times\Z_7)\sslash(\bfT\times\bfR)
\longmapsto
[X]\sslash\bfT
\] 
and 
\[
([X]\times\Z_7)\sslash(\bfT\times\bfR)
\longmapsto
\Z_7\sslash\bfR,
\]
respectively.

Since all three fibrations are indexed by the common base category $\HDef$, the projection functors send each lifted deformation morphism to the corresponding lifted deformation morphism. The projections are morphisms of fibrations over $\HDef$; equivalently, the following diagram commutes:
\[\begin{tikzcd}
	& {\int \Act_{\bfT \times \bfR}} & \\
	\\
	{\int \Act_{\bfT}} && {\int \Act_{\bfR}} \\
	\\
	& \HDef
	\arrow["{\int\pi_\bfT}"', from=1-2, to=3-1]
	\arrow["{\int\pi_\bfR}", from=1-2, to=3-3]
	\arrow["{P_{\bfT \times \bfR}}", from=1-2, to=5-2]
	\arrow["{P_\bfT}"', from=3-1, to=5-2]
	\arrow["{P_\bfR}", from=3-3, to=5-2]
\end{tikzcd}\]

\subsection{Orbit Cover Fibrations}
\label{sub:orbit-cover-fibrations}

The next layer of our hierarchy of fibrations enriches the action groupoids by equipping their objects with orbit cover data.

The following equivariance result shows that group automorphisms commute with deformations, and is an essential component in constructing the orbit-cover fibrations.

\begin{proposition}
\label{prop:equivariance-of-deformations}
Let $h:\Z_7 \to \Z_7$ be a group automorphism, and let $h_{i,j}:X\to X$ be the induced map satisfying $\mu_j\circ h_{i,j}= h\circ \mu_i$ (see \eqref{eq:h_ij}). Let $\Delta:X\to \Delta(X)$ be a deformation, and let $\delta:\Z_7\to \Z_7$ be its induced translation on mode indices, as defined in \eqref{eq:delta}. Then the induced map $h_{\delta(i),\delta(j)}:\Delta(X)\to \Delta(X)$ satisfies
\[
h_{\delta(i),\delta(j)}\circ \Delta
=
\Delta\circ h_{i,j},
\]
i.e., the diagram
\[\begin{tikzcd}
	X && X \\
	\\
	{\Delta(X)} && {\Delta(X)}
	\arrow["{h_{i,j}}", from=1-1, to=1-3]
	\arrow["\Delta"', from=1-1, to=3-1]
	\arrow["\Delta", from=1-3, to=3-3]
	\arrow["{h_{\delta(i),\delta(j)}}"', from=3-1, to=3-3]
\end{tikzcd}\]
commutes.
\end{proposition}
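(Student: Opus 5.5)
We will verify the identity pointwise, by pushing everything down to the scale-degree group $\Z_7$ and using that the modal orderings are bijections. The two inputs are the defining relation of $h_{i,j}$, namely $\mu_j\circ h_{i,j}=h\circ\mu_i$ from \eqref{eq:h_ij}, and the relation on mode indices from the proposition establishing \eqref{eq:delta}: $\mu_{\delta(n)}\circ\Delta=\mu_n$ for every $n\in\Z_7$. Here $\mu_n$ on $X$ and $\mu_{\delta(n)}$ on $\Delta(X)$ denote the modal orderings of the respective scales. Since $\mu_{\delta(j)}:\Delta(X)\to\Z_7$ is a bijection, it suffices to show
\[
\mu_{\delta(j)}\circ h_{\delta(i),\delta(j)}\circ\Delta \;=\; \mu_{\delta(j)}\circ\Delta\circ h_{i,j}.
\]

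First compute the left side. By the defining property of $h_{\delta(i),\delta(j)}$ on $\Delta(X)$, it equals $h\circ\mu_{\delta(i)}\circ\Delta$, and by the index relation with $n=i$ this is $h\circ\mu_i$. Now compute the right side. By the index relation with $n=j$, it equals $\mu_j\circ h_{i,j}$, which is again $h\circ\mu_i$ by \eqref{eq:h_ij}. The two sides agree, and cancelling $\mu_{\delta(j)}$ yields the commutative square.

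The only point requiring care is the applicability of the index relation. It was proved for a deformation $\Delta:(X,\mu_i)\to(Y,\mu_j)$ satisfying $\mu_j\circ\Delta=\mu_i$ for some base pair of indices, with $\delta$ the resulting translation. So I will fix such a base pair, say $i_0$ and $j_0=\delta(i_0)$, for the given deformation $\Delta:X\to\Delta(X)$. Then I will invoke the ``equivalently'' clause at the two arbitrary indices $i$ and $j$ appearing in $h_{i,j}$.

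This is the main, essentially bookkeeping, obstacle. It rests on $\Delta$ commuting with the torsor actions, $\Delta(q+x)=q+\Delta(x)$, which is what makes $\delta$ a translation independent of the index. One should check that this is implicit in regarding $\Delta$ as a mode isomorphism. Note also that $h$ being an automorphism is not actually needed: the same argument works for any homomorphism $h:\Z_7\to\Z_7$.
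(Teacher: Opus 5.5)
Your proof is correct and is the same argument as the paper's. The paper nests $h_{i,j}:X\to X$ inside $h_{\delta(i),\delta(j)}:\Delta(X)\to\Delta(X)$ over $h:\Z_7\to\Z_7$, observes that the two triangles ($\mu_{\delta(n)}\circ\Delta=\mu_n$), the inner square and the outer square all commute by definition, and concludes; your equational computation is that diagram chase written out, and it also makes explicit the step the paper leaves tacit, namely that the required square only follows after cancelling the bijection $\mu_{\delta(j)}$.
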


\begin{proof}
The claim is precisely the commutativity of the following diagram. Each of its constituent regions commutes by the defining properties of $\Delta$, $h_{i,j}$, and $\delta$. Hence the outer rectangle commutes.
\[\begin{tikzcd}
	{\Delta(X)} &&& {\Delta(X)} \\
	& X & X \\
	\\
	{\Z_7} &&& {\Z_7}
	\arrow["{h_{\delta(i), \delta(j)}}", from=1-1, to=1-4]
	\arrow["{\mu_{\delta(i)}}"', from=1-1, to=4-1]
	\arrow["{\mu_{\delta(j)}}", from=1-4, to=4-4]
	\arrow["\Delta"', from=2-2, to=1-1]
	\arrow["{h_{i,j}}", from=2-2, to=2-3]
	\arrow["{\mu_i}", from=2-2, to=4-1]
	\arrow["\Delta", from=2-3, to=1-4]
	\arrow["{\mu_j}"', from=2-3, to=4-4]
	\arrow["h"', from=4-1, to=4-4]
\end{tikzcd}\]
\end{proof}

\subsubsection{The Scalar Orbit-Cover Fibration}
\label{subsub:the-scalar-orbit-cover-fibration}

We first enrich the translation action groupoids by assigning a scalar orbit cover to each translate. Accordingly, we define a functor
\[
\OrbCovT:
\int\ActT
\longrightarrow
\Cat.
\]
An object of $\int\ActT$ is a pair $([X],Y)$, where $Y\in[X]$. To such an object we assign the category $\OrbCovT([X],Y)$, whose
\begin{enumerate}
\item objects are scalar orbit covers $Y^{(A)}$;
\item morphisms are scalar orbit-cover morphisms $h_{i,j}: Y^{(A)} \to Y^{(B)}$,
as defined in Definition~\ref{def:morphisms-Scaorb}.
\end{enumerate}

Now let $(\Delta,\bfT_p):
([X],Y)
\to
([W],Z)$ be a morphism of $\int\ActT$, so that $\Delta(\bfT_p(Y))=Z$. We define
\[
\OrbCovT(\Delta,\bfT_p):
\OrbCovT([X],Y)
\longrightarrow
\OrbCovT([W],Z)
\]
by
\begin{enumerate}
\item on objects, $Y^{(A)} \mapsto (\Delta(\bfT_p(Y^{(A)}))$;
\item on morphisms, $h_{i,j}
\mapsto
h_{\delta(i),\delta(j)}$,
by Proposition \ref{prop:equivariance-of-deformations}.
\end{enumerate}
Functoriality follows from the fact that  $\Delta \circ h_{i,j} = h_{\delta(i),\delta(j)} \circ \Delta$, from Proposition \ref{prop:equivariance-of-deformations}. Applying the Grothendieck construction therefore yields a fibration
\[
P_{\ActT}:
\int\OrbCovT
\longrightarrow
\int\ActT.
\]
An object of $\int\OrbCovT$ is a triple $([X],Y,Y^{(A)})$, which we abbreviate as $([X],Y^{(A)})$. The fiber over $([X],Y)$ is precisely the category $\OrbCovT([X],Y)$ of scalar orbit covers on the translate $Y$.

\subsubsection{Classifying and Characterizing Orbit-Cover Morphisms}
\label{subsub:classifying-characterizing-orbit-cover-morphisms}

Fix orbit covers $X^{(A)}$ and $Y^{(B)}$, generated respectively by interval compositions $\mathbf{c}$ and $\mathbf{d}$ (recalling the discussion of interval compositions from \S~\ref{sub:rooted-orbit-covers}). The condition of Definition~\ref{def:morphisms-Scaorb} states that a group automorphism $h : \Z_n \to \Z_n$ extends to an orbit cover morphism $X^{(A)} \to Y^{(B)}$ whenever, for every chord $A' \in X^{(A)}$, there exists a chord $B' \in Y^{(B)}$ such that $h_{i,j}(A') \subseteq B'$. This leaves open the question of \emph{which} such $B'$ is chosen as the witness for the subset condition. Choosing, for each $A'$, a specific witness $B'$ yields not merely a verification that the condition holds, but an actual bijection between the chords of $X^{(A)}$ and $Y^{(B)}$. This is the data we will need in \S~\ref{sub:harmonic-function-fibrations} to transport harmonic-function assignments from chords in one orbit cover to chords in another. We now show that such chord maps are classified precisely by the morphisms in a category $\mathbf C(n)$ of interval compositions of an $n$-note scale, which we introduce next.

\paragraph{The Category of Interval Compositions}

We construct a category $\mathbf{C}(n)$ whose objects are interval compositions of an $n$-note scale. Its morphisms $\mathbf c \to \mathbf d$ are exactly the data needed to induce a bijection between the chords of $X^{(A)}$ and $Y^{(B)}$, realizing the morphism condition of Definition~\ref{def:morphisms-Scaorb}.

We start with some preliminary work. For an interval composition $\mathbf c \in \mathbf{C}(n,k)$, recall its partial sums $S_b(\mathbf c)$ from \eqref{eq:part-sums}, and the associated set $\hat{\mathbf c}(0) = \{S_0(\mathbf c),\ldots,S_{k-1}(\mathbf c)\}$ from \eqref{eq:int-comp}. Since the parts of $\mathbf c$ are positive, $S_0=0<S_1<\cdots<S_{k-1}<n$ are strictly increasing, so $\hat{\mathbf c}(0)$ is a $k$-element subset of $\Z_n$ containing $0$; write $\binom{\Z_n}{k}_0$ for the set of all such subsets.

\begin{proposition}
\label{prop:intcomp-subset-bijection}
The correspondence $\mathbf c \mapsto \hat{\mathbf c}(0)$ defines a bijection
\[
\mathbf C(n,k) \; \xrightarrow{\ \sim\ } \; \binom{\Z_n}{k}_0,
\]
with inverse sending $A = \{0=A_0<A_1<\cdots<A_{k-1}\} \in \binom{\Z_n}{k}_0$ to
\[
\mathbf a \coloneqq (A_1-A_0,\ldots, n-A_{k-1}) \in \mathbf C(n,k).
\]
\end{proposition}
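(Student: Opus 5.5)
The plan is to check directly that the two maps, $\Phi:\mathbf c\mapsto\hat{\mathbf c}(0)$ and $\Psi:A\mapsto\mathbf a$, are well defined and mutually inverse. A set map with a two-sided inverse is a bijection, so no separate injectivity or surjectivity argument is needed. Throughout, I identify elements of $\Z_n$ with their representatives in $\{0,\ldots,n-1\}$. This lets the strict inequalities in the statement be read as inequalities of integers.

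\textbf{Well-definedness.} For $\Phi$, this was already noted just before the statement. Since each part $\mathbf c_j$ is positive, the partial sums satisfy $0=S_0<S_1<\cdots<S_{k-1}<S_k=n$. Hence they are $k$ distinct residues in $\{0,\ldots,n-1\}$, and $\hat{\mathbf c}(0)$ is a $k$-subset of $\Z_n$ containing $0$. For $\Psi$, take $A$ listed in increasing order $0=A_0<\cdots<A_{k-1}\le n-1$. The entries $A_{b}-A_{b-1}$ for $1\le b\le k-1$ and the last entry $n-A_{k-1}$ are all positive integers. Their sum telescopes to $A_{k-1}-A_0+n-A_{k-1}=n$, so $\mathbf a\in\mathbf C(n,k)$.

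\textbf{$\Psi\circ\Phi=\mathrm{id}$.} The increasing enumeration of $\hat{\mathbf c}(0)$ is exactly $S_0<S_1<\cdots<S_{k-1}$. The successive differences are therefore $S_b-S_{b-1}=\mathbf c_b$ for $1\le b\le k-1$. The last entry is $n-S_{k-1}=S_k-S_{k-1}=\mathbf c_k$. This recovers $\mathbf c$.

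\textbf{$\Phi\circ\Psi=\mathrm{id}$.} The partial sums of $\mathbf a$ telescope to $S_b(\mathbf a)=A_b-A_0=A_b$ for $0\le b\le k-1$. Hence $\hat{\mathbf a}(0)=A$.

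The only point needing care is the interpretation of $\mathbf C(n,k)$. Equation~\eqref{eq:int-comp-type} defines it as a set of tuples, but the surrounding text also speaks of classification ``modulo cyclic rotation.'' The bijection only holds for \emph{ordered} compositions, because rotating $\mathbf c$ changes $\hat{\mathbf c}(0)$. I will therefore state at the outset that $\mathbf C(n,k)$ is taken literally as the set of ordered tuples in \eqref{eq:int-comp-type}. This matches the rooted setting, where $0\in A$ fixes the starting point of the traversal.
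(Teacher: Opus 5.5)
Your proposal is correct and takes essentially the same route as the paper: check that both maps are well defined, then show they are mutually inverse by telescoping the partial sums. You are slightly more explicit than the paper, which verifies only $\hat{\mathbf a}(0)=A$ and cites the earlier strict-monotonicity remark for the other direction; your caveat that $\mathbf C(n,k)$ must be read as ordered tuples rather than rotation classes is also the correct reading.
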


\begin{proof}
Injectivity and well-definedness of $\mathbf c \mapsto \hat{\mathbf c}(0)$ were established above. Conversely, given $A=\{0=A_0<\cdots<A_{k-1}\}\in\binom{\Z_n}{k}_0$, the entries of $\mathbf a=(A_1-A_0,\ldots,n-A_{k-1})$ are positive integers summing to $n$, with partial sums $S_b(\mathbf a)=A_b$ for $0\le b<k$, so $\hat{\mathbf a}(0)=A$. These two constructions are mutually inverse.
\end{proof}

Via this identification, we may treat interval compositions $\mathbf c\in\mathbf C(n,k)$ interchangeably with objects of $\binom{\Z_n}{k}_0$. We now turn to morphisms, building toward Definition~\ref{def:intcomp-category} below, which assembles $\mathbf C(n)$ as a category once the right notion of morphism is in hand.

\begin{convention}
\label{conv:scale-degree-identification}
Equipping $X$ and $Y$ with modes $\mu_i : X \to \Z_n$ and $\mu_j : Y \to \Z_n$, we  identify a chord $A \subseteq X$ (resp.\ $B \subseteq Y$) with its image $\mu_i(A) \subseteq \Z_n$ (resp.\ $\mu_j(B)$), writing both simply as $A$ (resp.\ $B$), and using $X^{(A)}$ and $\Z_n^{(A)}$ interchangeably for the corresponding orbit cover.
\end{convention}

Fix orbit covers $X^{(A)}$ and $Y^{(B)}$, and suppose $h : \Z_n \to \Z_n$ is a group automorphism inducing an orbit cover morphism $h_{i,j} : X^{(A)} \to Y^{(B)}$, in the sense of Definition~\ref{def:morphisms-Scaorb}. The modal identifications $\mu_i, \mu_j$ transport $h_{i,j}$ to $h$ itself (Convention~\ref{conv:scale-degree-identification}), so the covering condition of Definition~\ref{def:morphisms-Scaorb} becomes a condition on chords of $\Z_n^{(A)}$ and $\Z_n^{(B)}$.

Since $0 \in A$ and $h$ is a group automorphism, $0 = h(0) \in h(A)$. Consequently, any chord $B'\in \Z_n^{(B)}$ witnessing $h(A) \subseteq B'$ must itself contain $0$. It therefore suffices to search among translates of $B$ containing $0$; these are precisely the translates $\T_{-b}(B)$ for $b \in B$, each itself an object of $\mathbf C(n)$. We will show below that the existence of such a translate $B'$ with $h(A) \subseteq B'$ is not only sufficient but \emph{necessary} for the existence of an orbit cover morphism $X^{(A)} \to Y^{(B)}$, and moreover that a choice of such $B'$ determines an induced bijection on chords.

We record what the aforementioned data amounts to. Seeking a translate $\T_{-b}(B)$ with $h(A) \subseteq \T_{-b}(B)$ is the same as seeking $b \in B$ such that the affine automorphism
\[
f(x) \coloneqq h(x) + b
\]
satisfies $f(A) \subseteq B$. Thus the data of a chord-level witness is exactly the data of an affine automorphism $f : \Z_n \to \Z_n$ with linear part $h$, satisfying $f(A) \subseteq B$. This is more than $h$ alone provides: $h$ only tells us that \emph{some} chord $B' \in \Z_n^{(B)}$ exists for each $A' \in \Z_n^{(A)}$ such that $h(A') \subseteq B'$, whereas $f$ additionally selects the witnessing translate, and so, as we shall soon see, determines a specific bijection of chords $\Z_n^{(A)} \to \Z_n^{(B)}$. Since $h$ is a group automorphism of $\Z_n$, it is of the form $h(x) = ux$ for a unique unit $u \in \Z_n^\times$; we write $u$ for this unit going forward, so that $f(x) = ux+b$.

\begin{lemma}
\label{lem:affine-translate-commute}
Let $u \in \Z_n^\times$ and $j \in \Z_n$. Then
\[
u \circ \T_j = \T_{uj} \circ u.
\]
\end{lemma}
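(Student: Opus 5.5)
The identity is an equality of maps $\Z_n \to \Z_n$, so I will prove it by evaluating both sides at an arbitrary $x \in \Z_n$ and comparing the results. Throughout, $u$ denotes the automorphism $h(x)=ux$ and $\T_j$ denotes translation $x \mapsto x+j$ in the scale-degree group $\Z_n$, as in Definition~\ref{def:mode}.

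The computation uses only the distributivity of multiplication over addition in the ring $\Z_n$. On the left,
\[
(u \circ \T_j)(x) = u(x+j) = ux + uj.
\]
On the right,
\[
(\T_{uj} \circ u)(x) = ux + uj .
\]
These agree for every $x$, which proves the identity. No invertibility of $u$ is needed here; the hypothesis $u \in \Z_n^\times$ only guarantees that both sides are automorphisms. So the same identity holds for any endomorphism of $\Z_n$.

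There is no genuine obstacle. The one point I would make explicit is notational: the symbol $u$ is used both for the unit and for the automorphism $h$ it defines. The translation index on the right-hand side, $uj$, is computed as a product in $\Z_n$.

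I would add one remark for later use, since this lemma is presumably what lets the characterization move translates of chords past $h$. It follows directly that $u(\T_j(A)) = \T_{uj}(u(A))$ for any subset $A \subseteq \Z_n$. Consequently, $h$ sends the chord $\T_j(A)$ of $\Z_n^{(A)}$ to a translate of $h(A)$ by $uj$. Hence, if $f(x)=ux+b$ satisfies $f(A)\subseteq B$, then
\[
h(\T_j(A)) \subseteq \T_{uj-b}(B).
\]
This is the chord-level bijection $\T_j(A) \mapsto \T_{uj-b}(B)$ anticipated in the discussion preceding the lemma.
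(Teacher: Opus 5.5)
Your proof is correct and is essentially the paper's own argument: evaluate both sides at an arbitrary $x \in \Z_n$ and use distributivity, $u(x+j) = ux + uj$. Your side remarks are also correct: invertibility of $u$ is not needed for the identity, and the containment $u(\T_j(A)) \subseteq \T_{uj-b}(B)$ matches how the lemma is used for the induced chord bijection.
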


\begin{proof}
For $x \in \Z_n$, we have
\begin{align*}
u \circ \T_j(x) & = u(x + j) \\
& = ux + uj \\
& = \T_{uj} \circ u(x). 
\end{align*}
Hence $u \circ \T_j = \T_{uj} \circ u$.
\end{proof}

\begin{proposition}
\label{prop:induced-chord-bijection}
Let $f \coloneqq \T_b \circ u$, i.e.\ $f(x) = ux+b$, and suppose $f(A) \subseteq B$, equivalently $u(A) \subseteq \T_{-b}(B)$. Then
\[
\begin{matrix}
\hat f : & \Z_n^{(A)} & \longrightarrow & \Z_n^{(B)} \\
&  \T_m(A)& \longmapsto &\T_{um-b}(B)
\end{matrix}
\]
is a well-defined bijection.
\end{proposition}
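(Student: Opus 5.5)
The plan is to parametrize both orbit covers by $\Z_n$ and to realize $\hat f$ as an affine bijection of $\Z_n$. The covers are
\[
\Z_n^{(A)}=\{\T_m(A)\mid m\in\Z_n\},\qquad \Z_n^{(B)}=\{\T_m(B)\mid m\in\Z_n\},
\]
so the maps $m\mapsto \T_m(A)$ and $m\mapsto \T_m(B)$ are surjections $\Z_n\to\Z_n^{(A)}$ and $\Z_n\to\Z_n^{(B)}$. Under these parametrizations, $\hat f$ is meant to be the map on chords induced by the affine map
\[
g:\Z_n\longrightarrow\Z_n,\qquad g(m)=um-b .
\]
Since $u\in\Z_n^\times$, $g$ is a bijection of $\Z_n$, with inverse $m\mapsto u^{-1}(m+b)$.

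First I would justify that $\T_{um-b}(B)$ is the correct target. By Lemma~\ref{lem:affine-translate-commute},
\[
u(\T_m(A))=\T_{um}(u(A))\subseteq \T_{um}(\T_{-b}(B))=\T_{um-b}(B).
\]
This uses the hypothesis $u(A)\subseteq\T_{-b}(B)$, and it shows that $\T_{um-b}(B)$ witnesses the covering condition of Definition~\ref{def:morphisms-Scaorb} for the chord $\T_m(A)$. This part is a one-line computation.

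The main obstacle is well-definedness, since the parametrization $m\mapsto\T_m(A)$ need not be injective. Suppose $\T_m(A)=\T_{m'}(A)$, so that $s=m'-m$ lies in the stabilizer $H_A=\{s\in\Z_n\mid \T_s(A)=A\}$. We then need $\T_{um-b}(B)=\T_{um'-b}(B)$, which is exactly the condition $us\in H_B$. So the key step is to prove the inclusion $u(H_A)\subseteq H_B$.

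In general this inclusion can fail. Take $n=6$, $A=\{0,3\}$, $B=\{0,1,3\}$, $u=1$, $b=0$: here $A\subseteq B$, but $3\in H_A$ while $3\notin H_B$. So I would use the setting in force in this section. For $n=7$ prime, every proper nonempty subset of $\Z_7$ has trivial stabilizer. If $A=\Z_n$, then the containment $u(A)\subseteq\T_{-b}(B)$ forces $B=\Z_n$, and both covers consist of a single chord. Otherwise $B$ also has trivial stabilizer: $B$ is proper, or else $B=\Z_n$ and the statement is trivial. In the nontrivial case both parametrizations are bijections.

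With that in hand, $\hat f$ is the composite of the bijection $\Z_n^{(A)}\cong\Z_n$, then $g$, then $\Z_n\cong\Z_n^{(B)}$. This composite is well defined and bijective, and its inverse is $\T_m(B)\mapsto\T_{u^{-1}(m+b)}(A)$.

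For general $n$ I would instead try to derive $u(H_A)\subseteq H_B$ from the orbit-cover morphism hypothesis. Failing that, I would add the assumption that the two covers have equally many chords and that this stabilizer inclusion holds, flagging it as the genuinely necessary extra condition.
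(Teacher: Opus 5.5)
Your core argument is the paper's: the lemma gives $u(\T_m(A))=\T_{um}(u(A))\subseteq\T_{um-b}(B)$, and bijectivity comes from composing the bijection $m\mapsto um-b$ of $\Z_n$ with the parametrizations $m\mapsto\T_m(A)$ and $m\mapsto\T_m(B)$.

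The paper calls that containment ``well-definedness'' and silently treats both parametrizations as bijections. In effect it reads an orbit cover as a $\Z_n$-indexed family of (rooted) chords, as when it asserts in \S~\ref{sub:rooted-orbit-covers} that $\hat{\mathbf c}:X\to X^{(A)}$ is a bijection. Under that reading nothing more needs checking, and the statement holds for every $n$. Under the literal definition $X^{(A)}=\{\T_i(A)\mid i\in\Z_n\}$, you are right that independence of the representative $m$ is a genuine issue. Your $n=6$ example is a valid counterexample: the two covers have $3$ and $6$ chords respectively, so no bijection can exist.

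The gap is in your $n=7$ reduction. The case $A\subsetneq\Z_7$, $B=\Z_7$ is not trivial; it is itself a counterexample under the set reading.
\begin{itemize}
\item Take $A=\{0,2,4\}$, $B=\Z_7$ and $f=\mathrm{id}$. This is a morphism of $\mathbf C(7)$, and the paper's Figure~\ref{fig:isoclasses} has arrows into $\{[(1,1,1,1,1,1,1)]\}$.
\item Here $\Z_7^{(A)}$ has seven chords while $\Z_7^{(B)}=\{\Z_7\}$ has one.
\item So $\hat f$ is well defined (it is constant) but not injective.
\end{itemize}
Under the set reading you must exclude this case explicitly. Under the indexed reading your stabilizer analysis is unnecessary altogether.

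Your plan to derive $u(H_A)\subseteq H_B$ from the orbit-cover morphism hypothesis for general $n$ also cannot work, since your own $n=6$ example satisfies that hypothesis. The sharp condition is simpler than you suggest:
\begin{itemize}
\item Every subgroup of the cyclic group $\Z_n$ is characteristic, so $u(H_A)=H_A$.
\item Hence $\hat f$ is well defined if and only if $H_A\subseteq H_B$.
\item Since $m\mapsto um-b$ is onto, $\hat f$ is then bijective if and only if $H_A=H_B$.
\item By uniqueness of subgroups of a given order in $\Z_n$, $H_A=H_B$ is equivalent to the two covers having equally many chords.
\end{itemize}
So ``equally many chords'' alone is the right extra hypothesis; the stabilizer inclusion then comes for free.
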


\begin{proof}
For each $m \in \Z_n$, Lemma~\ref{lem:affine-translate-commute} gives $u(\T_m(A)) = \T_{um}(u(A))$, and applying $\T_{um}$ to both sides of $u(A) \subseteq \T_{-b}(B)$ gives
\[
u(\T_m(A)) = \T_{um}(u(A)) \subseteq \T_{um}(\T_{-b}(B)) = \T_{um-b}(B),
\]
so $\hat f$ is well-defined. Since $u \in \Z_n^\times$, the map $m \mapsto um - b$ is a bijection of $\Z_n$; composing with the parametrizations $m \mapsto \T_m(A)$ and $m \mapsto \T_m(B)$ of $\Z_n^{(A)}$ and $\Z_n^{(B)}$ shows $\hat f$ is a bijection.
\end{proof}

\begin{definition}[The Category $\mathbf C(n)$]
\label{def:intcomp-category}
The category $\mathbf C(n)$ of interval compositions of $n$ is defined as follows.
\begin{enumerate}
    \item The object class is $\mathrm{Ob}(\mathbf C(n)) \coloneqq \coprod_{k=1}^n \binom{\Z_n}{k}_0$, identified via Proposition~\ref{prop:intcomp-subset-bijection} with interval compositions $\mathbf c \in \mathbf C(n,k)$, $1\le k\le n$; accordingly, we  write $\mathbf c$ for the object $\hat{\mathbf c}(0)$, and $\mathrm{Hom}_{\mathbf C(n)}(\mathbf c,\mathbf d)$ for $\mathrm{Hom}_{\mathbf C(n)}(\hat{\mathbf c}(0),\hat{\mathbf d}(0))$.
    \item For objects $A, B$, a \emph{morphism} $A \to B$ is an affine automorphism $f : \Z_n \to \Z_n$, $f(x) = ux+b$ for some $u\in\Z_n^\times$, $b\in\Z_n$, satisfying $f(A) \subseteq B$.
\end{enumerate}
Composition is composition of the underlying affine automorphisms, and the identity morphism on $A$ is $\mathrm{id}$ (the case $u=1$, $b=0$). Composition is well-defined, since a composite of affine automorphisms is again an affine automorphism, and $g(f(A)) \subseteq g(B) \subseteq C$ whenever $f(A)\subseteq B$ and $g(B)\subseteq C$. Hence $\mathbf C(n)$ is a category.
\end{definition}

\begin{remark}
By Proposition~\ref{prop:induced-chord-bijection}, every morphism $f\in\mathrm{Hom}_{\mathbf C(n)}(A,B)$ determines a canonical induced bijection $\hat f : \Z_n^{(A)} \to \Z_n^{(B)}$ on chords.
\end{remark}

\begin{theorem}
\label{th:main-morphism-characterization}
An orbit cover morphism $X^{(A)} \to Y^{(B)}$ exists if and only if there is an affine automorphism $f : \Z_n \to \Z_n$, $f(x) = ux+b$ for some $u\in\Z_n^\times$, $b\in\Z_n$, satisfying
\[
f(A) \subseteq B,
\]
equivalently
\[
u(A) \subseteq \T_{-b}(B).
\]
That is, $X^{(A)} \to Y^{(B)}$ admits an orbit cover morphism if and only if $\mathrm{Hom}_{\mathbf C(n)}(A,B) \neq \varnothing$.
\end{theorem}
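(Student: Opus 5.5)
The proof splits into the two directions of the biconditional. Throughout, we work under Convention~\ref{conv:scale-degree-identification}: the modal orderings $\mu_i,\mu_j$ transport any orbit cover morphism $h_{i,j}:X^{(A)}\to Y^{(B)}$ to the group automorphism $h=u\in\Z_n^\times$ itself. The covering condition of Definition~\ref{def:morphisms-Scaorb} then reads as follows: for every $m\in\Z_n$ there exists $m'\in\Z_n$ with $u(\T_m(A))\subseteq\T_{m'}(B)$. Here $A,B\subseteq\Z_n$ both contain $0$, since modal orbit covers are rooted at the tonic.

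For the \emph{only if} direction, suppose an orbit cover morphism exists with linear part $u$. Apply the covering condition to the single chord $A=\T_0(A)$. This gives some translate $B'=\T_{c}(B)$ with $u(A)\subseteq B'$. Because $0\in A$ and $u(0)=0$, we have $0\in B'$, so $-c\in B$. Writing $b\coloneqq -c$, we get $B'=\T_{-b}(B)$ with $b\in B$. Set $f(x)=ux+b$. Then $f(A)=\T_b(u(A))\subseteq\T_b\T_{-b}(B)=B$, so $f\in\mathrm{Hom}_{\mathbf C(n)}(A,B)$. In fact, the membership $0\in B'$ is not needed for this step. Any witness $\T_c(B)$ already yields $f(x)=ux-c$ with $f(A)\subseteq B$, and I will note this simplification to keep the argument clean.

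For the \emph{if} direction, suppose $f(x)=ux+b$ satisfies $f(A)\subseteq B$. Take $h\coloneqq u$, and let $h_{i,j}$ be the map induced via \eqref{eq:h_ij}. Proposition~\ref{prop:induced-chord-bijection} already shows that $u(\T_m(A))\subseteq\T_{um-b}(B)$ for every $m$. This is precisely the covering condition of Definition~\ref{def:morphisms-Scaorb}, once it is transported back along $\mu_i$ and $\mu_j$. Hence $h_{i,j}$ is an orbit cover morphism. The equivalence $f(A)\subseteq B\iff u(A)\subseteq\T_{-b}(B)$ is immediate: apply the bijection $\T_{-b}$ to both sides.

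The main obstacle is conceptual rather than computational. The argument must be careful that Definition~\ref{def:morphisms-Scaorb} allows only induced maps $h_{i,j}$ whose $h$ is an automorphism; in the heptatonic setting with $n=n'$ these are exactly multiplications by units. It must also justify that the transport via the modes introduces no extra affine shift. The remark following \eqref{eq:h_ij} shows that a choice of target mode $\mu_j$ can contribute a translation, but the covering condition is stated over all translates of $B$, so such a shift is absorbed into the choice of $b$. I would verify this absorption explicitly, checking that replacing $\mu_j$ by $\T_s\circ\mu_j$ replaces $f$ by $\T_s\circ f$. This still maps $A$ into a translate of $B$, which is again in $\mathrm{Hom}_{\mathbf C(n)}(A,\T_{s}B)$; since $\T_s(B)$ and $B$ generate the same orbit cover, this leaves nonemptiness of the Hom-set up to re-rooting unaffected.
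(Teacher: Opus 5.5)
Your proposal is correct and follows the paper's proof essentially step for step. Sufficiency uses the translate containment $u(\T_m(A))\subseteq\T_{um-b}(B)$ from Proposition~\ref{prop:induced-chord-bijection}, and necessity applies the covering condition to the chord $A$ itself and reads off $b$ from the witnessing translate. Your two refinements are valid but not spelled out in the paper: $0\in B'$ is not actually needed, since any witness $\T_c(B)$ already gives $f(x)=ux-c$, and changing the target mode only composes $f$ with a translation, so the existence of a morphism does not depend on the choice of modes.
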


\begin{proof}
\emph{Sufficiency.} Suppose $f(x)=ux+b$ satisfies $f(A)\subseteq B$, equivalently $u(A) \subseteq \T_{-b}(B)$. Setting $h(x) \coloneqq ux$, Proposition~\ref{prop:induced-chord-bijection} shows that $h$ satisfies the covering condition of Definition~\ref{def:morphisms-Scaorb} (with witness $\T_{-b}(B) \in Y^{(B)}$ for the chord $A$, and, by translation-equivariance, the corresponding translate of $B$ for every other chord of $X^{(A)}$) and moreover induces a bijection $\hat f : \Z_n^{(A)} \to \Z_n^{(B)}$ on chords. Hence $h$ extends to an orbit cover morphism $X^{(A)} \to Y^{(B)}$.

\emph{Necessity.} Suppose $h:\Z_n\to\Z_n$ is a group automorphism inducing an orbit cover morphism $X^{(A)} \to Y^{(B)}$. By Definition~\ref{def:morphisms-Scaorb}, there exists $B' \in \Z_n^{(B)}$ with $h(A) \subseteq B'$. Since $0 \in A$ and $h(0)=0$, we have $0 \in h(A) \subseteq B'$, so $B' = \T_{-b}(B)$ for some $b \in B$, as established above. Setting $f(x) \coloneqq ux+b$, this is exactly the condition $u(A) \subseteq \T_{-b}(B)$, i.e.\ $f(A)\subseteq B$.
\end{proof}

\begin{convention}
\label{conv:tonic-rooted-notation}
For an interval composition $\mathbf c\in\mathbf C(n,k)$ and a scale $X$ with chosen tonic $t\in X$, i.e.\ $\mu_i(t)=0$, we write $(X^{(\mathbf c)},\mu_i)$ for the modal orbit cover $X^{(\hat{\mathbf c}(t))}$, where $\hat{\mathbf c}(t)$ is as in \eqref{eq:int-comp}.
\end{convention}

\begin{remark}
\label{rem:induced-map-in-composition-notation}
Under Convention~\ref{conv:tonic-rooted-notation}, $\T_m(A) = \hat{\mathbf c}(m)$ and $\T_{um-b}(B) = \hat{\mathbf d}(um-b)$, so the induced bijection of Proposition~\ref{prop:induced-chord-bijection} may be written
\[
\hat f(\hat{\mathbf c}(z)) = \hat{\mathbf d}(uz - b), \qquad z \in \Z_n.
\]
\end{remark}

\begin{example}
\label{ex:rotation-witness-nonuniqueness}
Let $\mathbf c=(2,2,3)$, the tertian triad type, so $\hat{\mathbf c}(0)=\{0,2,4\}$, and let $\mathbf d=(2,2,1,2)$, so $\hat{\mathbf d}(0)=\{0,2,4,5\}$. Take $u=1$.

The morphism $f_0(x)=x$ (i.e.\ $u=1$, $b=0$) witnesses containment trivially, since $\hat{\mathbf c}(0)\subseteq\hat{\mathbf d}(0)$. But so does $f_1(x)=x-2$ (i.e.\ $u=1$, $b=-2$), since
\[
\hat{\mathbf c}(0) - 2 = \{-2,0,2\} \pmod 7 = \{0,2,5\} \subseteq \hat{\mathbf d}(0).
\]
Both $f_0,f_1 \in \mathrm{Hom}_{\mathbf C(7)}(\mathbf c,\mathbf d)$, and by Remark~\ref{rem:induced-map-in-composition-notation} induce distinct bijections
\[
\hat f_0(\hat{\mathbf c}(z)) = \hat{\mathbf d}(z),
\qquad
\hat f_1(\hat{\mathbf c}(z)) = \hat{\mathbf d}(z+2).
\]
Identifying $\Z_7$ with the degrees of the Ionian mode ($0,1,2,3,4,5,6\leftrightarrow \mathrm C,\mathrm D,\mathrm E,\mathrm F,\mathrm G,\mathrm A,\mathrm B$), the root triad $\hat{\mathbf c}(0) = \{0,2,4\} = \{\mathrm C,\mathrm E,\mathrm G\}$ is sent by the two witnesses to
\[
\hat f_0(\{0,2,4\}) = \{0,2,4,5\} = \{\mathrm C,\mathrm E,\mathrm G,\mathrm A\},
\qquad
\hat f_1(\{0,2,4\}) = \{0,2,4,6\} = \{\mathrm C,\mathrm E,\mathrm G,\mathrm B\}.
\]
That is, $\hat f_0$ extends the C-major triad to $\mathrm C6$, while $\hat f_1$ extends it to $\mathrm C\mathrm{maj}7$---and by the same computation applied to every $z\in\Z_7$, $\hat f_0$ uniformly adds the sixth to every tertian triad in the orbit cover while $\hat f_1$ uniformly adds the seventh. Both are legitimate instances of the canonical induced chord map for the same unit $u=1$.
\end{example}

Figure~\ref{fig:isoclasses} shows the isomorphism classes of orbit covers over a heptatonic scale, according to the interval compositions that generate them. Rows are arranged by $k$, the number of parts, from $k=7$ at the top to $k=1$ at the bottom. Each node lists an isomorphism class as a set of rotation classes $[\mathbf c]$, and an arrow between two nodes indicates the existence of an orbit cover morphism between the orbit covers generated by representatives of the respective classes.

\begin{figure}[h!]
\[
\begin{tikzcd}[column sep=small, row sep=large]
& \left\{[(1,1,1,1,1,1,1)]\right\} & \\
& \left\{[(1,1,1,1,1,2)]\right\} & \\
& {\left\{\begin{array}{l} [(1,1,1,1,3)] \\{} [(1,1,1,2,2)] \\{} [(1,1,2,1,2)] \end{array}\right\}} & \\
{\left\{\begin{array}{l} [(1,1,1,4)] \\{} [(1,2,1,3)] \\{} [(1,2,2,2)] \end{array}\right\}} & & {\left\{\begin{array}{l} [(1,1,2,3)] \\{} [(1,1,3,2)] \end{array}\right\}} \\
{\left\{\begin{array}{l} [(1,1,5)] \\{} [(1,3,3)] \\{} [(2,2,3)] \end{array}\right\}} & & {\left\{\begin{array}{l} [(1,2,4)] \\{} [(1,4,2)] \end{array}\right\}} \\
& {\left\{\begin{array}{l} [(1,6)] \\{} [(2,5)] \\{} [(3,4)] \end{array}\right\}} & \\
& \left\{(7)\right\} &
	\arrow[from=2-2, to=1-2]
	\arrow[from=3-2, to=2-2]
	\arrow[from=4-1, to=3-2]
	\arrow[from=4-3, to=3-2]
	\arrow[from=5-1, to=4-1]
	\arrow[from=5-1, to=4-3]
	\arrow[from=5-3, to=4-1]
	\arrow[from=5-3, to=4-3]
	\arrow[from=6-2, to=5-1]
	\arrow[from=6-2, to=5-3]
	\arrow[from=7-2, to=6-2]
\end{tikzcd}
\]
\caption{Isomorphism classes of scalar orbit covers over a heptatonic scale, by number of parts $k=1,\dots,7$ (bottom to top).}
\label{fig:isoclasses}
\end{figure}

\subsubsection{The Modal Orbit-Cover Fibration}
\label{subsub:the-modal-orbit-cover-fibration}

The scalar orbit-cover fibration admits a modal analogue, obtained by replacing translation action groupoids with translation-rotation action groupoids. We define a functor
\[
\OrbCovTR:
\int\ActTR
\longrightarrow
\Cat.
\]
An object of $\int\ActTR$ is a triple $([X],Y,\mu_i)$, where $Y\in[X]$ and $\mu_i$ is a mode on $Y$. To such an object we assign the category $\OrbCovTR([X],Y,\mu_i)$, whose objects are modal orbit covers $(Y^{(\mathbf{c})},\mu_i)$, for $\mathbf c \in \mathbf C(n,k)$, and whose morphisms, for fixed $(Y,\mu_i)$, are given by
\[
\mathrm{Hom}_{\OrbCovTR([X],Y,\mu_i)}\bigl((Y^{(\mathbf c)},\mu_i),\,(Y^{(\mathbf d)},\mu_i)\bigr) \coloneqq \mathrm{Hom}_{\mathbf C(n)}(\mathbf c,\mathbf d).
\]
For $f \in \mathrm{Hom}_{\mathbf C(n)}(\mathbf c,\mathbf d)$, we write $\hat f_{i,i} : Y^{(\mathbf c)} \to Y^{(\mathbf d)}$ for the canonical chord bijection it determines via Proposition~\ref{prop:induced-chord-bijection}.

Now let
\[
(\Delta,\bfT_p,\bfR_q):
([X],Y,\mu_i)
\longrightarrow
([W],Z,\mu_j)
\]
be a morphism of $\int\ActTR$. We define
\[
\OrbCovTR(\Delta,\bfT_p,\bfR_q):
\OrbCovTR([X],Y,\mu_i)
\longrightarrow
\OrbCovTR([W],Z,\mu_j)
\]
by
\begin{enumerate}
\item on objects, $(Y^{(\mathbf{c})},\mu_i) \mapsto (Z^{(\mathbf{c})},\mu_j)$;
\item on morphisms, it acts as the identity $f \mapsto f$.
\end{enumerate}
Again, functoriality follows from the equivariance of modal orbit-cover morphisms under deformations (Proposition~\ref{prop:equivariance-of-deformations}).

Applying the Grothendieck construction therefore yields a fibration
\[
P_{\ActTR}:
\int\OrbCovTR
\longrightarrow
\int\ActTR.
\]
An object of $\int\OrbCovTR$ is thus a quadruple $([X],Y,\mu_i,Y^{(\mathbf{c})})$, which we abbreviate as $([X],Y^{(\mathbf{c})},\mu_i)$. The fiber over $([X],Y,\mu_i)$ is precisely the category $\OrbCovTR([X],Y,\mu_i) \cong \mathbf{C}(7)$ of modal orbit covers on the mode $(Y,\mu_i)$. A morphism of $\int\OrbCovTR$ is correspondingly a quadruple
\[
(\Delta,\bfT_p,\bfR_q,f):
([X],Y^{(\mathbf{c})},\mu_i)
\longrightarrow
([W],Z^{(\mathbf{d})},\mu_j),
\]
consisting of a base morphism 
\[
(\Delta,\bfT_p,\bfR_q):([X],Y,\mu_i)\longrightarrow([W],Z,\mu_j)
\] 
of $\int\ActTR$ together with a fiber morphism
\[
f : \bigl(Z^{(\mathbf c)},\mu_j\bigr) \longrightarrow (Z^{(\mathbf d)},\mu_j)
\]
in $\OrbCovTR([W],Z,\mu_j)$, where $f\in\mathrm{Hom}_{\mathbf{C}(7)}(\mathbf c,\mathbf d)$. By Proposition~\ref{prop:induced-chord-bijection}, every such morphism determines a canonical bijection $\hat f_{i,j}:Y^{(\mathbf c)}\to Z^{(\mathbf d)}$, which we use in \S~\ref{sub:harmonic-function-fibrations} to define the harmonic-function fibration $\Func$.

Finally, forgetting the translational part of $f$ and retaining only the linear part $\ell(f)$ defines a functor
\[
\OrbCovTR([X],Y,\mu_i)
\longrightarrow
\OrbCovT([X],Y),
\qquad
(Y^{(\mathbf c)}, \mu_i) \longmapsto Y^{(\mathbf c)}, \qquad
f\longmapsto \ell(f),
\]
which assemble into a natural transformation
\[
\rho_{\bfT}:
\OrbCovTR
\Longrightarrow
\OrbCovT\circ\pi_{\bfT}.
\]
Applying the Grothendieck construction yields a morphism of fibrations
\[
\int\rho_{\bfT}:
\int\OrbCovTR
\longrightarrow
\int\OrbCovT,
\]
covering the projection $\int \pi_{\bfT}:\int\ActTR\to\int\ActT$.

The hierarchy of fibrations constructed thus far is summarized in Figure~\ref{fig:hierarchy-orbcov}.
\begin{figure}[h]
\[\begin{tikzcd}
	&& {\int \OrbCovTR} && \\
	\\
	{\int \OrbCovT} \\
	&& {\int \ActTR} \\
	\\
	{\int \ActT} &&&& {\int \ActR} \\
	\\
	&& \HDef
	\arrow["{\int \rho_\bfT}"', from=1-3, to=3-1]
	\arrow["{P_{\ActTR}}", from=1-3, to=4-3]
	\arrow["{P_{\ActT}}"', from=3-1, to=6-1]
	\arrow["{\int \pi_\bfT}"', from=4-3, to=6-1]
	\arrow["{\int \pi_\bfR}", from=4-3, to=6-5]
	\arrow["{P_{\bfT \times \bfR}}", from=4-3, to=8-3]
	\arrow["{P_\bfT}"', from=6-1, to=8-3]
	\arrow["{P_\bfR}", from=6-5, to=8-3]
\end{tikzcd}\]
\caption{The hierarchy of fibrations constructed through \S~\ref{sub:orbit-cover-fibrations}.}
\label{fig:hierarchy-orbcov}
\end{figure}

\paragraph{Restriction to Modal Families}
\label{para:restriction-to-modal-families}

The modal orbit-cover fibration above applies to all seven modes over each translation orbit. In many applications, however, one wishes to work only with a selected collection of modes. For example, common-practice harmony typically uses only the Ionian and Aeolian modes. Such choices are modeled by restricting the ambient translation-rotation action groupoid to a full subgroupoid.

Let $I\subseteq\Z_7$. For each translation orbit $[X]$, let 
\[
\ActTR([X];I) \longhookrightarrow \ActTR([X])
\] 
denote the full subgroupoid of $\ActTR([X])$ whose objects are the modes $(Y,\mu_i)$, for \mbox{$Y\in[X]$} and $i\in I$. Restricting the local orbit-cover functor
\[
\OrbCovTR([X]):
\ActTR([X])
\longrightarrow
\Cat
\]
to this subgroupoid yields a functor
\[
\OrbCovTR([X];I):
\ActTR([X];I)
\longrightarrow
\Cat.
\]
Applying the Grothendieck construction gives a restricted modal orbit-cover fibration
\[
\int\OrbCovTR([X];I)
\longrightarrow
\ActTR([X];I).
\]
Its objects are precisely the modal orbit covers whose modal indices belong to $I$.

If $I\subseteq J$, then the inclusion $\ActTR([X];I) \hookrightarrow \ActTR([X];J)$ induces an inclusion of restricted modal orbit-cover fibrations
\[
\int\OrbCovTR([X];I)
\longhookrightarrow
\int\OrbCovTR([X];J).
\]
Now let $\Delta: \ActTR([X]) \to \ActTR([Y])$ be a deformation functor, and let $\delta:\Z_7\to\Z_7$ be the induced map on modal indices. Then $\Delta$ restricts to a functor
\[
\Delta_I:
\ActTR([X];I)
\longrightarrow
\ActTR([Y];\delta(I)).
\]
Since $\OrbCovTR$ is functorial, this restriction induces a functor
\[
\widetilde{\Delta}_I:
\int\OrbCovTR([X];I)
\longrightarrow
\int\OrbCovTR([Y];\delta(I)),
\]
making the diagram
\[\begin{tikzcd}
	{\int \OrbCovTR([X]; I)} && {\int \OrbCovTR([Y]; \delta(I))} \\
	\\
	{\ActTR([X]; I)} && {\ActTR([Y]; \delta(I))}
	\arrow["{\tilde{\Delta}_I}", from=1-1, to=1-3] 
	\arrow["{P_{\ActTR}}"', from=1-1, to=3-1]
	\arrow["{P_{\ActTR}}", from=1-3, to=3-3]
	\arrow["{\Delta_I}"', from=3-1, to=3-3]
\end{tikzcd}\]
commute.

Thus every deformation induces a morphism between the restricted modal orbit-cover fibrations. Thus modal families arise as full subfibrations of the ambient modal orbit-cover fibration.

\subsubsection{The Cadential Set Construction}
\label{subsub:the-cadential-set-construction}

We now present the cadential-set construction for orbit covers. Cadential sets were formulated by Mazzola \cite[Chapters 26 and 27]{mazzola2002topos} and Muzzulini \cite{muzzulini1995musical} to explain the process of modulation as described by Schoenberg \cite{schoenberg1978theory}; in Mazzola's study, these were identified for diatonic triadic orbit covers. The cadential sets $\mathscr{C}(Y^{(\mathbf c)})$ of an orbit cover $Y^{(\mathbf c)}$ in the translation orbit $[X^{(\mathbf c)}]$ are those subsets $\Gamma\subseteq Y^{(\mathbf c)}$ of triads such that only $Y^{(\mathbf c)}\in[X^{(\mathbf c)}]$ contains these chords. They are called \emph{cadential} because they uniquely identify $Y^{(\mathbf c)}$: a modulation from some other $Z^{(\mathbf c)}$ to $Y^{(\mathbf c)}$ can be regarded as fully accomplished once the chords of $\Gamma$ have occurred, since their occurrence alone firmly establishes the new key $Y^{(\mathbf c)}$ (see Figure~\ref{fig:cadential-set}).

\begin{figure}[htbp]
\centering
\includegraphics[width=0.6\textwidth]{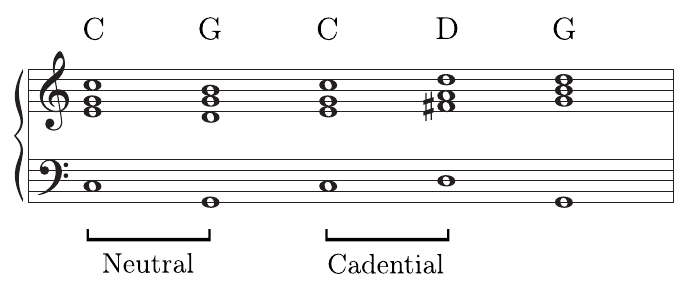}
\caption{The progression $\mathrm{C}$--$\mathrm{G}$--$\mathrm{C}$--$\mathrm{D}$--$\mathrm{G}$, realizing a modulation from C major to G major. The opening $\mathrm{C}$--$\mathrm{G}$ progression is \emph{neutral}: the $\mathrm{C}$-major and $\mathrm{G}$-major triads each occur diatonically in both $\mathrm{C}$ major and $\mathrm{G}$ major, so together they do not yet single out the $\mathrm{G}$ major tonality. Once the second $\mathrm{C}$ chord and the $\mathrm{D}$ chord have sounded, however, $\Gamma=\{\mathrm{C},\mathrm{D}\}$ is \emph{cadential}: $\mathrm{G}$ major is the only orbit cover in the translation orbit whose diatonic triads include both a $\mathrm{C}$-major and a $\mathrm{D}$-major chord, so the pair $\Gamma$ uniquely identifies it. The closing $\mathrm{G}$ chord is the tonic arrival a listener hears as confirming the cadence, but $\mathrm{G}$ major is already determined the moment every chord in $\Gamma$ has sounded.}
\label{fig:cadential-set}
\end{figure}

While we make no use of the cadential-set construction in the theoretical developments that follow, it is nonetheless useful data to have on hand for actually composing modulations within our framework. Later, when we formulate harmonic practices as generalizations of common-practice tonality (\S~\ref{sub:harmonic-practices}), it will be helpful to have cadential sets on hand, so that a composer working within this framework knows how to uniquely target the orbit cover of a modulation.

Note that the cadential-set construction makes no use of modal data, as it depends only on the scalar orbit-cover fibration. Specifically, for each scalar orbit cover $X^{(\mathbf c)}$, the corresponding translation action groupoid $[X^{(\mathbf c)}]\sslash\mathbf T$ forms a canonical subgroupoid of $\int\OrbCovT([X])$, whose objects are the translates of $X^{(\mathbf c)}$ and whose morphisms are translations. It is on these translation action groupoids that the cadential-set construction is defined.

\begin{definition}[Cadential Set]
Let $Y^{(\mathbf c)}\in[X^{(\mathbf c)}]$. A subset $\Gamma\subseteq Y^{(\mathbf c)}$ is called a \emph{cadential set} for $Y^{(\mathbf c)}$ if
\[
\Gamma\subseteq Z^{(\mathbf c)}
\implies
Z^{(\mathbf c)}=Y^{(\mathbf c)}
\]
for every $Z^{(\mathbf c)}\in[X^{(\mathbf c)}]$. A cadential set is called \emph{minimal} if none of its proper subsets is cadential.
\end{definition}

Thus a cadential set is a collection of chords that uniquely identifies an orbit cover within its translation orbit. 

In what follows, the term ``cadential set'' will always mean ``minimal cadential set.''

For each orbit-cover groupoid  $[X^{(\mathbf c)}] \sslash \bfT \hookrightarrow \int \OrbCovT([X])$ and $Y^{(\mathbf c)} \in [X^{(\mathbf c)}]$, define
\[
\mathscr{C}(Y^{(\mathbf c)}) \coloneqq
\left\{
\Gamma\subseteq Y^{(\mathbf c)}
\mid
\Gamma\text{ is a cadential set for }Y^{(\mathbf c)}
\right\}.
\]

\begin{proposition}
Let $\bfT_p : Y^{(\mathbf c)} \to Z^{(\mathbf c)}$ be a translation in $[X^{(\mathbf c)}] \sslash \bfT$. If $\Gamma$ is a cadential set for $Y^{(\mathbf c)}$, then
\[
\bfT_p(\Gamma)=\{\bfT_p(\gamma)\mid\gamma\in\Gamma\}
\]
is a cadential set for $Z^{(\mathbf c)}$.
\end{proposition}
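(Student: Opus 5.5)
The plan is to exploit the fact that $\bfT_p$ acts on the translation orbit $[X^{(\mathbf c)}]$ as a bijection that respects chord membership. Concretely, for any $W^{(\mathbf c)}\in[X^{(\mathbf c)}]$ and any chord $\gamma$, we have $\gamma\in W^{(\mathbf c)}$ if and only if $\bfT_p(\gamma)\in\bfT_p(W^{(\mathbf c)})$. This holds because $\bfT_p$ is a bijection of $\Z_{12}$ and $\bfT_p(W^{(\mathbf c)})=\{\bfT_p(\beta)\mid\beta\in W^{(\mathbf c)}\}$. That last identity comes from the rooted description of chords: $\bfT_p$ carries the root $x$ to $x+p$ and commutes with the scale steps $\T_{S_b}$, just as in the proof of Proposition~\ref{prop:equivariance}. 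It follows that $\bfT_p(\Gamma)\subseteq \bfT_p(Y^{(\mathbf c)})=Z^{(\mathbf c)}$.

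\emph{Cadentiality.} Suppose $\bfT_p(\Gamma)\subseteq W^{(\mathbf c)}$ for some $W^{(\mathbf c)}\in[X^{(\mathbf c)}]$. Applying $\bfT_{-p}$ and using the equivalence above gives $\Gamma\subseteq \bfT_{-p}(W^{(\mathbf c)})$. The orbit cover $\bfT_{-p}(W^{(\mathbf c)})$ again lies in $[X^{(\mathbf c)}]$, since the orbit is closed under translation. Because $\Gamma$ is cadential for $Y^{(\mathbf c)}$, this forces $\bfT_{-p}(W^{(\mathbf c)})=Y^{(\mathbf c)}$, and hence $W^{(\mathbf c)}=\bfT_p(Y^{(\mathbf c)})=Z^{(\mathbf c)}$.

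\emph{Minimality.} Any proper subset of $\bfT_p(\Gamma)$ has the form $\bfT_p(\Gamma')$ for a proper subset $\Gamma'\subsetneq\Gamma$, because $\bfT_p$ is injective on chords. If $\bfT_p(\Gamma')$ were cadential for $Z^{(\mathbf c)}$, the argument above run with $\bfT_{-p}$ would make $\Gamma'$ cadential for $Y^{(\mathbf c)}$. That contradicts the minimality of $\Gamma$.

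The main obstacle is purely bookkeeping: checking that translating an orbit cover chordwise agrees with the object-level action $\bfT_p(Y^{(\mathbf c)})=Z^{(\mathbf c)}$ in the groupoid $[X^{(\mathbf c)}]\sslash\bfT$. This follows from the fact that translations commute with scale steps. Once it is in place, everything else is the symmetric use of $\bfT_{\pm p}$.
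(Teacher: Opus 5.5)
Your proof is correct, and its cadentiality step is the paper's argument: apply $\bfT_{-p}$ to $\bfT_p(\Gamma)\subseteq W^{(\mathbf c)}$, use that $\Gamma$ is cadential to get $\bfT_{-p}(W^{(\mathbf c)})=Y^{(\mathbf c)}$, and translate back to obtain $W^{(\mathbf c)}=Z^{(\mathbf c)}$. You also check that $\bfT_p(\Gamma)\subseteq Z^{(\mathbf c)}$ and that minimality is preserved. The paper's convention that ``cadential set'' means ``minimal cadential set'' requires both checks, but its own proof omits them, and your arguments for them are sound.
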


\begin{proof}
Suppose $\bfT_p(\Gamma)\subseteq W^{(\mathbf c)}$ for some $W^{(\mathbf c)}\in[X^{(\mathbf c)}]$. Applying $\bfT_{-p}$ to $\bfT_p(\Gamma)\subseteq W^{(\mathbf c)}$ and using that $\Gamma$ is cadential for $Y^{(\mathbf c)}$ forces $\bfT_{-p}(W^{(\mathbf c)})=Y^{(\mathbf c)}$, hence 
\[
W^{(\mathbf c)}=\bfT_p(Y^{(\mathbf c)})=Z^{(\mathbf c)}; 
\]
so $\bfT_p(\Gamma)$ is cadential.
\end{proof}

Hence every morphism $\bfT_p : Y^{(\mathbf c)} \to Z^{(\mathbf c)}$ induces a map
\[
\begin{matrix}
\bfT_p: & \mathscr{C}(Y^{(\mathbf c)}) & \longrightarrow & \mathscr{C}(Z^{(\mathbf c)}) \\
& \Gamma & \longmapsto & \{\,\bfT_p(\gamma)\mid\gamma\in\Gamma\,\}.
\end{matrix}
\]
Thus,
\[
\mathscr{C}:[X^{(\mathbf c)}] \sslash \bfT \longrightarrow [\mathscr{C}(X^{(\mathbf c)})] \sslash \bfT
\]
defines a functor (in fact, a groupoid isomorphism), assigning to each orbit cover its collection of cadential sets and transporting cadential sets along translations.

\paragraph{No Extension to Orbit-Cover Morphisms}
\label{no-extension-to-orbit-cover-morphisms}

In contrast with translations, an orbit-cover morphism $\hat f_{i,j}:X^{(\mathbf c)}\to X^{(\mathbf d)}$ between two orbit covers of a fixed scale $X$ does not, in general, induce a map $\mathscr{C}(X^{(\mathbf c)})\to\mathscr{C}(X^{(\mathbf d)})$ between the corresponding collections of cadential sets. Thus orbit-cover morphisms do not in general preserve cadentiality.

\begin{example}
Let $X = \{0,2,4,5,7,9,11\}$ be the C diatonic scale. Take $\mathbf c = (2,2,3)$, $\mathbf d= (3,3,1)$, so that $X^{(\mathbf c)}$ is the usual tertian orbit cover and $X^{(\mathbf d)}$ the quartal orbit cover. Define the affine automorphism $f(x)=5x$, which induces the orbit-cover morphism
\[
\begin{matrix}
X^{(\mathbf c)} & \xlongrightarrow{\hat f_{1,1}} & X^{(\mathbf d)} \\
\{0,4,7\} & \longmapsto & \{0,5,11\} \\ 
\{2,5,9\} & \longmapsto & \{9,2,7\} \\
\{4,7,11\} & \longmapsto & \{5,11,4\} \\ 
\{5,9,0\} & \longmapsto & \{2,7,0\} \\
\{7,11,2\} & \longmapsto & \{11,4,9\} \\
\{9,0,4\} &\longmapsto & \{7,0,5\} \\
\{11,2,5\} & \longmapsto & \{4,9,2\}.
\end{matrix}
\]
The cadential sets of $X^{(\mathbf c)}$ are
\[
\begin{matrix}
\{\{11,2,5\}\},\\
\{\{2,5,9\},\{4,7,11\}\},\\
\{\{2,5,9\},\{7,11,2\}\},\\
\{\{5,9,0\},\{4,7,11\}\},\\
\{\{5,9,0\},\{7,11,2\}\},
\end{matrix}
\]
whereas the cadential sets of $X^{(\mathbf d)}$ are
\[
\begin{matrix}
\{\{0,5,11\}\},\\
\{\{5,11,4\}\},\\
\{\{7,0,5\},\{11,4,9\}\}.
\end{matrix}
\]
Under the action of $\hat f_{i,j}$, the singleton cadential set $\{\{11,2,5\}\}$ is mapped to $\{\{4,9,2\}\}$, which is not a cadential set for $X^{(\mathbf d)}$. Hence the image of a cadential set need not be cadential, so $h$ does not induce a map $\mathscr{C}(X^{(\mathbf c)}) \to \mathscr{C}(X^{(\mathbf d)})$.

This has to do with the fundamentally chromatic nature of cadentiality. Whether a collection of chords is cadential depends on the interaction between the chromatic intervallic structure of a scale and the way its orbit cover is realized in that chromatic space. By contrast, an orbit-cover morphism is determined entirely by its action on the abstract scale-degree group $\Z_7$, and therefore does not preserve the chromatic information on which cadentiality depends.
\end{example}

We treat here the case where $h$ relates two covers of a single fixed scale $X$; combined with the translation action of \S~\ref{subsub:the-cadential-set-construction} and the deformation-equivariance of Proposition~\ref{prop:equivariance-of-deformations}, this recovers the general morphism $(\Delta,\bfT_p,h)$ of the ambient category in the evident, componentwise way. 

Every such orbit-cover morphism $h_{i,j}$ from $X^{(\mathbf c)}$ to $X^{(\mathbf d)}$ extends canonically to a functor
\[
h_{i,j}:[X^{(\mathbf c)}]\sslash\mathbf T
\longrightarrow
[X^{(\mathbf d)}]\sslash\mathbf T,
\]
and hence, composing with $\mathscr C$, to a functor $\mathscr C(h_{i,j})$ between the corresponding cadential-set groupoids. This extension costs nothing further. Since $h$ acts on $\Z_7$ while $\bfT_p$ is chromatic transposition, $h_{i,j}$ commutes with every $\bfT_p$ and simply retypes the $\mathbf c$-cover on each translate as the $\mathbf d$-cover on that same translate. Combined with the translation-equivariance of $\mathscr C$ established above, the square
\[\begin{tikzcd}
	{[X^{(\mathbf c)}] \sslash \bfT} && {[X^{(\mathbf d)}] \sslash \bfT} \\
	\\
	{[\mathscr{C}(X^{(\mathbf c)})] \sslash \bfT } && {[\mathscr{C}(X^{(\mathbf d)})] \sslash \bfT }
	\arrow["{h_{i,j}}", from=1-1, to=1-3]
	\arrow["{\mathscr{C}}"', from=1-1, to=3-1]
	\arrow["{\mathscr{C}}", from=1-3, to=3-3]
	\arrow["{\mathscr{C}(h_{i,j})}"', from=3-1, to=3-3]
\end{tikzcd}\]
commutes immediately: both composites send $\bfT_p$ to $\bfT_p$.

\begin{remark}
Let
\[
\widetilde{\Delta}:\int\OrbCovT([X])
\longrightarrow
\int\OrbCovT([Y])
\]
be the morphism of orbit-cover fibrations induced by a deformation $\Delta$, and let
\[
h_{i,j}:[X^{(\mathbf c)}]\sslash\bfT
\longrightarrow
[X^{(\mathbf d)}]\sslash\bfT
\]
be a functor between translation action subgroupoids of $\int\OrbCovT([X])$. These data determine a commuting cube (Figure~\ref{fig:commuting-cube}): the vertical direction is given by $h_{i,j}$, the rightward direction by the deformation-induced functor $\widetilde{\Delta}$, and the leftward direction by the cadential-set construction $\mathscr{C}$. 

\begin{figure}[htbp]
\[\begin{tikzcd}
	&& {[\mathscr{C}(\Delta(X^{(\mathbf c)}))] \sslash \bfT } & \\
	{[\mathscr{C}(X^{(\mathbf c)})] \sslash \bfT } \\
	&&& {[\Delta(X^{(\mathbf c)})] \sslash \bfT} \\
	& {[X^{(\mathbf c)}] \sslash \bfT} \\
	&& {[\mathscr{C}(\Delta(X^{(\mathbf d)}))] \sslash \bfT } \\
	{[\mathscr{C}(X^{(\mathbf d)})] \sslash \bfT } \\
	&&& {[\Delta(X^{(\mathbf d)})] \sslash \bfT} \\
	& {[X^{(\mathbf d)}] \sslash \bfT} \\
	&&& {\int \OrbCovT([Y])} \\
	& {\int \OrbCovT([X])}
	\arrow["{\mathscr{C}(h_{\delta(i),\delta(j)})}"{pos=0.4}, from=1-3, to=5-3]
	\arrow["\Delta", from=2-1, to=1-3]
	\arrow["{\mathscr{C}(h_{i,j})}", from=2-1, to=6-1]
	\arrow["{\mathscr{C}}"', from=3-4, to=1-3]
	\arrow["{h_{\delta(i),\delta(j)}}", from=3-4, to=7-4]
	\arrow["{\mathscr{C}}"', from=4-2, to=2-1]
	\arrow["\Delta"{pos=0.3}, from=4-2, to=3-4]
	\arrow["{h_{i,j}}"{pos=0.6}, from=4-2, to=8-2]
	\arrow["\Delta"{pos=0.4}, from=6-1, to=5-3]
	\arrow["{\mathscr{C}}", from=7-4, to=5-3]
	\arrow[hook, shorten <=15pt, shorten >=15pt, from=7-4, to=9-4]
	\arrow["{\mathscr{C}}", from=8-2, to=6-1]
	\arrow["\Delta", from=8-2, to=7-4]
	\arrow[hook, shorten <=15pt, shorten >=15pt, from=8-2, to=10-2]
	\arrow["{\tilde{\Delta}}", from=10-2, to=9-4]
\end{tikzcd}\]
\caption{The commuting cube relating orbit-cover morphisms, deformations, and cadential sets. Front vertical arrows are the orbit-cover morphism $h_{i,j}:[X^{(\mathbf c)}]\sslash\bfT\to[X^{(\mathbf d)}]\sslash\bfT$ and its $\Delta$-transport $h_{\delta(i),\delta(j)}$; arrows to the right are induced by the deformation $\Delta$, together with the functor $\widetilde\Delta$ it induces on the ambient total categories; arrows to the left are the cadential-set functor $\mathscr{C}$. Hooked arrows are the canonical inclusions of $[X^{(\mathbf d)}]\sslash\bfT$ and $[\Delta(X^{(\mathbf d)})]\sslash\bfT$ into $\int\OrbCovT([X])$ and $\int\OrbCovT([Y])$, respectively.}
\label{fig:commuting-cube}
\end{figure}
\end{remark}

\subsection{The Deformation Family Fibration}
\label{sub:the-deformation-family-fibrations}

We now enrich modal orbit covers with a family of deformations attached to each. A deformation family determines the allowable substitutions of notes and chordal degrees on both the underlying scale and its orbit cover, furnishing the data needed for harmonic substitution, pivot chords, and modulation, treated in \S~\ref{sub:harmonic-vocabularies}--\ref{sub:inheritance-of-harmonic-practices}.

We begin with a definition.

\begin{definition}[Deformation Family]
Let $([X],Y^{(\mathbf c)},\mu_i)$ be a modal orbit cover. A \emph{deformation family} for $([X],Y^{(\mathbf c)},\mu_i)$ is a subset 
\[
\mathcal{D}\subseteq \coprod_{[Y] \in \mathrm{Ob}(\HDef)} \operatorname{Hom}_{\HDef}([X],[Y]) 
\]
containing the identity morphism $\operatorname{id}_{[X]}$.
\end{definition}

A deformation family induces substitution structures on both the underlying scale and its orbit cover. Define
\begin{equation}
\label{eq:deformation-substitutions}
\mathcal{D}(Y)
\coloneqq
\coprod_{\Delta\in\mathcal D}\Delta(Y),
\qquad
\mathcal{D}(Y^{(\mathbf c)})
\coloneqq
\coprod_{\Delta\in\mathcal D}\Delta(Y^{(\mathbf c)}).
\end{equation}
These come equipped with projection maps
\begin{equation}
\begin{matrix}
p_Y : & \mathcal D(Y) & \longrightarrow & Y \\
& \Delta(y) & \longmapsto & y,
\end{matrix}
\end{equation}
and
\begin{equation}
\label{eq:proj-orb-cov}
\begin{matrix}
p_{Y^{(\mathbf c)}} : & \mathcal D(Y^{(\mathbf c)}) & \longrightarrow & Y^{(\mathbf c)} \\
& \Delta(A) & \longmapsto & A.
\end{matrix}
\end{equation}
Composing these projections with the degree maps $\mu_i:Y\to\mathbb Z_n$ and $\deg:Y^{(\mathbf c)}\to\mathbb Z_n$ (recalling \eqref{eq:chordal-degree}) relates substitutions to their scalar and chordal degrees.

The following definition identifies those chords common to two orbit covers as \emph{pivot chords}---the notion underlying the secondary chords and pivot-chord modulations discussed later (\S~\ref{sub:harmonic-vocabularies}--\ref{sub:inheritance-of-harmonic-practices}).

\begin{definition}[Direct Pivot Chords]
\label{def:direct-pivot-chords}
Let $Y^{(\mathbf c)}, Z^{(\mathbf c)} \in [X^{(\mathbf c)}]$ be orbit covers over the same translation orbit. Their \emph{direct pivot chords} are $Y^{(\mathbf c)}\cap Z^{(\mathbf c)}$.
\end{definition}
After equipping orbit covers with deformation families, additional pivot constructions become available. For deformation-equipped orbit covers
\[
M_1=([X],Y^{(\mathbf c)},\mu_i,\mathcal D),
\qquad
M_2=([X],Z^{(\mathbf c)},\mu_j,\mathcal E),
\]
there are four natural intersections:
\[
Y^{(\mathbf c)}\cap Z^{(\mathbf c)},\qquad
\mathcal D(Y^{(\mathbf c)})\cap Z^{(\mathbf c)},\qquad
Y^{(\mathbf c)}\cap\mathcal E(Z^{(\mathbf c)}),\qquad
\mathcal D(Y^{(\mathbf c)})\cap\mathcal E(Z^{(\mathbf c)}).
\]
The first of these is just the direct pivot chords of $Y^{(\mathbf c)}$ and $Z^{(\mathbf c)}$ (Definition~\ref{def:direct-pivot-chords}). Each of the middle two is a \emph{one-sided deformation pivot}: the \emph{left} one-sided pivot $\mathcal D(Y^{(\mathbf c)})\cap Z^{(\mathbf c)}$ and the \emph{right} one-sided pivot $Y^{(\mathbf c)}\cap\mathcal E(Z^{(\mathbf c)})$. The last is the \emph{two-sided deformation pivot}. We therefore distinguish three types of pivot chords: direct pivots, one-sided deformation pivots (of which there are two, left and right, per pair $M_1,M_2$), and two-sided deformation pivots.

\begin{example}[Secondary Chords]
\label{ex:secondary-chords}
Consider C major and G major as modal orbit covers
\[
C=(Y^{(\mathbf c)},\mu_1)
\qquad\text{and}\qquad
G=(Z^{(\mathbf c)},\mu_1).
\]
Let $\mathcal D$ consist of the single deformation that raises the fourth degree of the C-major scale, replacing $\mathrm{F}$ by $\mathrm{F}^\sharp$. Then the left-sided deformation pivots $\mathcal D(Y^{(\mathbf c)})\cap Z^{(\mathbf c)}$ contain the D-major triad (Figure~\ref{fig:secondary-chord-projection}). This chord is not a member of the original C-major orbit cover, but it becomes available after deformation and is simultaneously the diatonic dominant triad of G major.

Thus the left-sided deformation pivots identify chords obtained by altering the source harmonic vocabulary so that they become degrees of the target orbit cover. In the common-practice setting, these correspond precisely to secondary chords of the target key. (For a more thorough formulation, see Example~\ref{ex:secondary-functions}.)
\end{example}

\begin{figure}[htbp]
\centering
\includegraphics[width=0.75\textwidth]{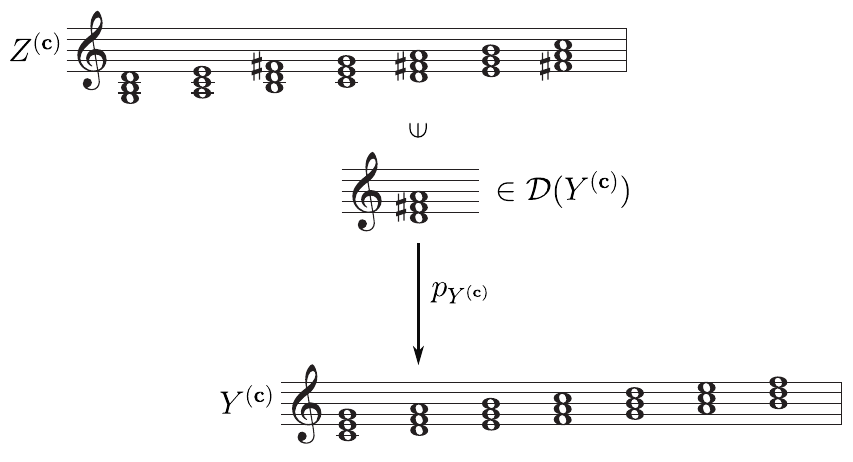}
\caption{The pivot chord of Example~\ref{ex:secondary-chords}. Applying the deformation $\mathcal D$ (which raises $\mathrm{F}$ to $\mathrm{F}^\sharp$) to the ii chord of $Y^{(\mathbf c)}$ produces the D-major triad shown in $\mathcal D(Y^{(\mathbf c)})$, which is exhibited here as one of the seven diatonic triads of $Z^{(\mathbf c)}$. The projection $p_{Y^{(\mathbf c)}}:\mathcal D(Y^{(\mathbf c)})\to Y^{(\mathbf c)}$ sends this deformed chord back to its undeformed counterpart, the D-minor triad of $Y^{(\mathbf c)}$.}
\label{fig:secondary-chord-projection}
\end{figure}

The interaction between deformation families and the cadential set construction (\S~\ref{subsub:the-cadential-set-construction}) provides a natural framework for generalized modulation within the orbit cover setting. A composition may begin in the modal orbit cover $M_1$, pass through pivot chords between $M_1$ and $M_2$, and then establish the destination orbit cover $M_2$ by means of a cadential set in $M_2$. Modulation is one of the principal motivations for introducing deformation families.

We now show that deformation families give rise to a new fibrational layer in our theory.

\subsubsection{Definition of the Deformation-Family Fibration}
\label{subsub:the-deformation-family-fibration}

We define a functor
\[
\DefFam:\int\OrbCovTR\longrightarrow\mathbf{Pos},
\]
into the category of posets as follows.  For a modal orbit cover $M=([X],Y^{(\mathbf c)},\mu_i)$, let $\DefFam(M)$ denote the category whose
\begin{enumerate}
\item objects are deformation families $\mathcal D$ over $M$;
\item morphisms are inclusions $\mathcal D\hookrightarrow\mathcal D'$.
\end{enumerate}
Given a morphism
\[
(\Delta,\bfT_p,\bfR_q,f):
([X],Y^{(\mathbf c)},\mu_i)
\longrightarrow
([W],Z^{(\mathbf d)},\mu_j)
\]
in $\int\OrbCovTR$, define
\[
\DefFam(\Delta,\bfT_p,\bfR_q,f):
\DefFam(M)
\longrightarrow
\DefFam(N),
\]
where $N=([W],Z^{(\mathbf d)},\mu_j)$, by
\begin{enumerate}
\item on objects, 
\begin{equation}
\label{eq:def-on-objects} 
\mathcal D
\longmapsto
\Delta\circ\mathcal D \coloneqq
\{\Gamma \circ \Delta^{-1}  \mid\Gamma\in\mathcal D\};
\end{equation}

\item on morphisms, $\mathcal D \to \mathcal D'$ is sent to the induced inclusion $\Delta\circ\mathcal D
\hookrightarrow
\Delta\circ\mathcal D'$.
\end{enumerate}
This determines a functor
\[
\DefFam:\int\OrbCovTR\longrightarrow\mathbf{Pos},
\]
and hence a Grothendieck fibration
\[
P_{\DefFam} : \int\DefFam\longrightarrow\int\OrbCovTR.
\]
Thus the objects of $\int\DefFam$ are quadruples $([X],Y^{(\mathbf c)},\mu_i,\mathcal D)$.

The hierarchy of fibrations constructed thus far is summarized in Figure~\ref{fig:hierarchy-deffam}.
\begin{figure}[h]
\[\begin{tikzcd}
	&& {\int \DefFam} && \\
	\\
	&& {\int \OrbCovTR} \\
	\\
	{\int \OrbCovT} \\
	&& {\int \ActTR} \\
	\\
	{\int \ActT} &&&& {\int \ActR} \\
	\\
	&& \HDef
	\arrow["{P_{\OrbCovTR}}", from=1-3, to=3-3]
	\arrow["{\int\rho_\bfT}"', from=3-3, to=5-1]
	\arrow["{P_{\ActTR}}", from=3-3, to=6-3]
	\arrow["{P_{\ActT}}"', from=5-1, to=8-1]
	\arrow["{\int\pi_\bfT}"', from=6-3, to=8-1]
	\arrow["{\int\pi_\bfR}", from=6-3, to=8-5]
	\arrow["{P_{\bfT \times \bfR}}", from=6-3, to=10-3]
	\arrow["{P_\bfT}"', from=8-1, to=10-3]
	\arrow["{P_\bfR}", from=8-5, to=10-3]
\end{tikzcd}\]
\caption{The hierarchy of fibrations constructed through \S~\ref{sub:the-deformation-family-fibrations}.}
\label{fig:hierarchy-deffam}
\end{figure}

\subsection{Harmonic Function Fibrations}
\label{sub:harmonic-function-fibrations}

The final fibrational layer equips the chords of an orbit cover with harmonic functions. We will not attempt to derive harmonic function from more primitive musical principles; instead we treat it as an abstract specification assigned to chords. This matters because, as we will see in \S~\ref{sub:harmonic-practices}, harmonic syntax is defined at the level of harmonic functions rather than directly on chords. Once functions have been assigned to the chords of an orbit cover, the syntax transports to those chords, so the same abstract syntax can be realized on any orbit cover through a suitable functional assignment.  We now formalize this idea by introducing a fibrational layer whose objects consist of orbit covers together with harmonic function assignments.

Before presenting the construction, we briefly recall the basic notions of harmonic function. In common-practice tonality, the principal harmonic functions are tonic (T), subdominant (S), and dominant (D), represented by the I, IV, and V triads. Other diatonic chords are typically regarded as refinements of these core functions. For example, in Riemannian function theory \cite{riemann1896harmony}, the vi chord in a major key is called the \emph{tonic parallel} (Tp), the ii chord the \emph{subdominant parallel} (Sp), and the iii chord the \emph{dominant parallel} (Dp), although iii is also sometimes interpreted as the \emph{tonic counter-parallel} (Tcp).

Abstracting from this situation, we will define a \emph{harmonic function schema} as a set $F_{\operatorname{core}}$ of core harmonic functions, a set $F_{\operatorname{spec}}$ of specialized harmonic functions, and a surjective map
\[
\varphi : F_{\operatorname{spec}} \longrightarrow F_{\operatorname{core}}
\]
assigning each specialized function to its underlying core function. Then, given a harmonic function schema $(F_{\operatorname{core}}, F_{\operatorname{spec}}, \varphi)$ and an orbit cover $Y^{(\mathbf c)}$, a \emph{functional assignment} is a map
\[
\Phi:Y^{(\mathbf c)}\longrightarrow\mathcal{P}(F_{\operatorname{spec}}),
\]
assigning to each chord $A\in Y^{(\mathbf c)}$ its set of harmonic functions. Composing with $\varphi$, extended pointwise to power sets, recovers the corresponding set of core harmonic functions.

This leads to two harmonic function fibrations. The first is defined over $\int\OrbCovTR$, where functions are assigned to the chords of a modal orbit cover. The second is defined over $\int\DefFam$, allowing the original chords and their deformation images to carry independent functional assignments. 

\subsubsection{Harmonic Functions Independent of Deformation Families}
\label{subsub:harmonic-functions-independent-of-deformation-families}

\begin{definition}[Harmonic Function Schema]
A \emph{harmonic function schema} is a triple  $\mathcal{F} = (F_{\operatorname{core}}, F_{\operatorname{spec}}, \varphi)$, consisting of:
\begin{enumerate}
\item a set $F_{\operatorname{core}}$ of core harmonic functions;
\item a set $F_{\operatorname{spec}}$ of specialized harmonic functions;
\item a surjection $\varphi : F_{\operatorname{spec}} \to F_{\operatorname{core}}$.
\end{enumerate}
\end{definition}

In the principal examples of this paper, the core functions form a subset $F_{\core}\subseteq F_{\spec}$. In this situation we assume that $\varphi(\alpha)=\alpha$, for every $\alpha\in F_{\core}$, so that the core functions are fixed points of $\varphi$. Equivalently, $\varphi$ is an idempotent retraction of $F_{\spec}$ onto $F_{\core}$:
\[\begin{tikzcd}
	{F_{\core}} && {F_{\spec}} \\
	\\
	&& {F_{\core}}
	\arrow[hook, from=1-1, to=1-3]
	\arrow["{\mathrm{id}}"', from=1-1, to=3-3]
	\arrow["\varphi", from=1-3, to=3-3]
\end{tikzcd}\]

\begin{example} 
\label{ex:harmonic-function-schema}
Let
\[
F_{\spec} =
\{\mathrm{T},\mathrm{Tp},\mathrm{Tcp},
  \mathrm{S},\mathrm{Sp},
  \mathrm{D},\mathrm{Dp}\},
\qquad
F_{\core} =
\{\mathrm{T},\mathrm{S},\mathrm{D}\},
\]
and define $\varphi$ so that it fixes each element of $F_{\core}$ while sending every specialized function to its underlying core function. We will use this later in
\S~\ref{subsub:rewrite-rules} when formalizing harmonic grammars.
\end{example}

\begin{definition}[Functional Assignment]
Let $M=([X],Y^{(\mathbf c)},\mu_i)$ be a modal orbit cover, and let $\mathcal{F} = (F_{\operatorname{core}},F_{\operatorname{spec}},\varphi)$ be a harmonic function schema. A \emph{functional assignment} for $M$ with respect to this schema is a map
\[
\Phi:Y^{(\mathbf c)}\longrightarrow\mathcal P(F_{\spec}).
\]
By abuse of notation, we also write
\[
\begin{matrix}
\varphi : & \mathcal P(F_{\operatorname{spec}})& \longrightarrow& \mathcal P(F_{\operatorname{core}}) \\
& G & \longmapsto & \{\varphi(\alpha)\mid \alpha\in G\},
\end{matrix}
\]
for the induced map on power sets. The corresponding assignment of core harmonic functions is therefore
\begin{equation}
\label{eq:core-funcs}
\Phi_{\operatorname{core}} \coloneqq\varphi\circ\Phi:
Y^{(\mathbf c)}\longrightarrow\mathcal P(F_{\operatorname{core}}).
\end{equation}
\end{definition}

A harmonic function schema therefore fixes the abstract vocabulary of a harmonic practice---the function-types on which harmonic syntax operates---while a functional assignment realizes that abstract vocabulary on the concrete chords of a particular orbit cover.

\begin{definition}[Realizations of a Function]
\label{def:realizations-of-function}
For each core harmonic function $\alpha\in F_{\operatorname{core}}$, the set of chords realizing $\alpha$ is
\begin{equation}
\label{eq:realizations-of-function}
\operatorname{Real}_{\Phi}(\alpha)
\coloneqq
\{\,A\in Y^{(\mathbf c)}
\mid
\alpha\in\Phi_{\operatorname{core}}(A)
\,\}.
\end{equation}
\end{definition}

\begin{remark}
Writing
\begin{equation}
\label{eq:up-alpha}
\uparrow\alpha
\coloneqq
\{\,G\subseteq F_{\operatorname{core}}\mid\alpha\in G\,\}
\end{equation}
for the principal up-set of $\alpha$ in $\mathcal P(F_{\operatorname{core}})$, Equation~\eqref{eq:realizations-of-function} is equivalently 
\[
\operatorname{Real}_{\Phi}(\alpha) = \Phi_{\operatorname{core}}^{-1}(\uparrow\alpha).
\]
\end{remark}

\begin{definition}
\label{def:func-fibration}
Let $M=([X],Y^{(\mathbf c)},\mu_i)$ be a modal orbit cover. The category $\Func(M)$ is defined as follows.
\begin{enumerate}
\item Its objects are pairs $(\mathcal{F}, \Phi)$ where $\mathcal{F} = (F_{\operatorname{core}}, F_{\operatorname{spec}}, \varphi)$ is a harmonic function schema and $\Phi$ is a functional assignment $\Phi : Y^{(\mathbf c)} \to \mathcal{P}({F_{\spec}})$ with respect to this schema. 
\item A morphism $(\mathcal{F}, \Phi)  \to (\mathcal{F}',\Phi')$ is a set map $g_{\core}:F_{\operatorname{core}} \to F'_{\operatorname{core}}$ such that, for every $A \in Y^{(\mathbf c)}$,
\[
g_{\core}(\Phi_{\operatorname{core}}(A))
\subseteq
\Phi'_{\operatorname{core}}(A).
\]
\end{enumerate}
\end{definition}

Condition (2) asks only for an inclusion, not equality. This is because a morphism $(\mathcal{F},\Phi)\to(\mathcal{F}',\Phi')$ should be understood as an \emph{enrichment} of the harmonic functions available for each chord $A\in Y^{(\mathbf c)}$, rather than a rigid transport of it. Equality would force $g_{\core}$ to carry the functions assigned to $A$ in $\mathcal F$ onto \emph{exactly} the functions assigned to $A$ in $\mathcal F'$, so that any function recognized for $A$ in the target schema but absent from the image of the source would obstruct the morphism entirely. Under the subset condition, $\Phi'_{\operatorname{core}}(A)$ may properly contain the image of $\Phi_{\operatorname{core}}(A)$; the target schema is free to recognize additional harmonic functions for $A$ that the source schema did not distinguish. For instance, a chord assigned only the tonic function under $\Phi$ may, under a richer schema $\Phi'$, also be assigned a function such as tonic substitute, without this preventing $g_{\core}$ from being a morphism. Morphisms in $\Func(M)$ thus point from coarser schemas toward richer ones, requiring only that no functional assignment already present in the source be lost.

\begin{example}
\label{ex:motte-rohrmeier}
Fix the modal orbit cover $(Y^{(\mathbf c)}, \mu_i)$ whose chords are the diatonic triads of the major mode, labeled with Roman numerals $Y^{(\mathbf c)} = \{\mathrm I, \mathrm{II}, \dots, \mathrm{VII}\}$. Let every schema below share the core function set $F_{\operatorname{core}} = \{\mathrm T, \mathrm S, \mathrm D\}$ (tonic, subdominant, dominant), so that a functional assignment is a map $\Phi_{\operatorname{core}}: Y^{(\mathbf c)} \to \mathcal P(F_{\operatorname{core}})$.

De la Motte \cite{motte1991study} assigns
\begin{align*}
\Phi^{\mathrm{dlM}}_{\operatorname{core}}(\mathrm I) = \Phi^{\mathrm{dlM}}_{\operatorname{core}}(\mathrm{VI}) &= \{\mathrm T\}, \\
\Phi^{\mathrm{dlM}}_{\operatorname{core}}(\mathrm{III}) = \Phi^{\mathrm{dlM}}_{\operatorname{core}}(\mathrm V) &= \{\mathrm D\}, \\
\Phi^{\mathrm{dlM}}_{\operatorname{core}}(\mathrm{II}) = \Phi^{\mathrm{dlM}}_{\operatorname{core}}(\mathrm{IV}) &= \{\mathrm S\}, \\
\Phi^{\mathrm{dlM}}_{\operatorname{core}}(\mathrm{VII}) &= \varnothing.
\end{align*}
Rohrmeier \cite{rohrmeier2011towards} assigns
\begin{align*}
\Phi^{R}_{\operatorname{core}}(\mathrm I) = \Phi^{R}_{\operatorname{core}}(\mathrm{III}) = \Phi^{R}_{\operatorname{core}}(\mathrm{VI}) &= \{\mathrm T\}, \\
\Phi^{R}_{\operatorname{core}}(\mathrm{II}) = \Phi^{R}_{\operatorname{core}}(\mathrm{IV}) &= \{\mathrm S\}, \\
\Phi^{R}_{\operatorname{core}}(\mathrm V) = \Phi^{R}_{\operatorname{core}}(\mathrm{VII}) &= \{\mathrm D\}.
\end{align*}

We show that no morphism relates $\mathrm{dlM}$ and $R$. A morphism $\mathrm{dlM} \to R$ would be a map $g_{\operatorname{core}}: F_{\operatorname{core}} \to F_{\operatorname{core}}$ with $g_{\operatorname{core}}(\Phi^{\mathrm{dlM}}_{\operatorname{core}}(A)) \subseteq \Phi^{R}_{\operatorname{core}}(A)$ for every $A$. Comparing the two schemas at $\mathrm I$ and $\mathrm V$ forces the values of $g_{\operatorname{core}}$ on $\mathrm T$ and $\mathrm D$, and comparing at $\mathrm{III}$ then breaks the inclusion:
\begin{align*}
\Phi^{\mathrm{dlM}}_{\operatorname{core}}(\mathrm I) = \Phi^{R}_{\operatorname{core}}(\mathrm I) = \{\mathrm T\}
&\implies g_{\operatorname{core}}(\mathrm T) = \mathrm T, \\
\Phi^{\mathrm{dlM}}_{\operatorname{core}}(\mathrm V) = \Phi^{R}_{\operatorname{core}}(\mathrm V) = \{\mathrm D\}
&\implies g_{\operatorname{core}}(\mathrm D) = \mathrm D, \\
g_{\operatorname{core}}\big(\Phi^{\mathrm{dlM}}_{\operatorname{core}}(\mathrm{III})\big) = g_{\operatorname{core}}(\{\mathrm D\}) = \{\mathrm D\}
&\not\subseteq \{\mathrm T\} = \Phi^{R}_{\operatorname{core}}(\mathrm{III}).
\end{align*}
So no such $g_{\operatorname{core}}$ exists. Symmetrically, a morphism $R \to \mathrm{dlM}$ would need $g_{\operatorname{core}}(\mathrm T) = \mathrm T$ and $g_{\operatorname{core}}(\mathrm D) = \mathrm D$ from the same two chords, forcing
\[
g_{\operatorname{core}}\big(\Phi^{R}_{\operatorname{core}}(\mathrm{III})\big) = g_{\operatorname{core}}(\{\mathrm T\}) = \{\mathrm T\} \not\subseteq \{\mathrm D\} = \Phi^{\mathrm{dlM}}_{\operatorname{core}}(\mathrm{III}),
\]
so this direction fails too. Both schemas disagree about the function of $\mathrm{III}$.

We now provide a common enrichment that absorbs both de la Motte's and Rohrmeier's assignments. Take $\mathrm I, \mathrm{IV}, \mathrm V$ as the primary $\mathrm T, \mathrm S, \mathrm D$ representatives, and admit any chord a third above or below one of these as a substitute for it:
\begin{align*}
\Phi^{L}_{\operatorname{core}}(\mathrm I) &= \{\mathrm T\}, \\
\Phi^{L}_{\operatorname{core}}(\mathrm{II}) &= \{\mathrm S\}, \\
\Phi^{L}_{\operatorname{core}}(\mathrm{III}) &= \{\mathrm T, \mathrm D\}, \\
\Phi^{L}_{\operatorname{core}}(\mathrm{IV}) &= \{\mathrm S\}, \\
\Phi^{L}_{\operatorname{core}}(\mathrm V) &= \{\mathrm D\}, \\
\Phi^{L}_{\operatorname{core}}(\mathrm{VI}) &= \{\mathrm T, \mathrm S\}, \\
\Phi^{L}_{\operatorname{core}}(\mathrm{VII}) &= \{\mathrm D\}
\end{align*}
($\mathrm{III}$ sits a third from both $\mathrm I$ and $\mathrm V$; $\mathrm{VI}$ a third from both $\mathrm I$ and $\mathrm{IV}$). Taking $g_{\core} = \operatorname{id}_{F_{\operatorname{core}}}$, at $\mathrm I, \mathrm{II}, \mathrm{IV}, \mathrm V$ all three schemas assign the same singleton, so the inclusion is immediate. At $\mathrm{VII}$, $\Phi^{\mathrm{dlM}}_{\operatorname{core}}(\mathrm{VII}) = \varnothing$ is trivially contained in $\Phi^{R}_{\operatorname{core}}(\mathrm{VII}) = \Phi^{L}_{\operatorname{core}}(\mathrm{VII}) = \{\mathrm D\}$. The only chords where $\Phi^{L}_{\operatorname{core}}(B)$ is not a singleton are $\mathrm{III}$ and $\mathrm{VI}$---precisely where $\mathrm{dlM}$ and $R$ disagreed, and the liberal schema simply admits both readings:
\[
\Phi^{\mathrm{dlM}}_{\operatorname{core}}(\mathrm{III}) = \{\mathrm D\} \subseteq \{\mathrm T, \mathrm D\} = \Phi^{L}_{\operatorname{core}}(\mathrm{III}), \qquad
\Phi^{R}_{\operatorname{core}}(\mathrm{III}) = \{\mathrm T\} \subseteq \{\mathrm T, \mathrm D\} = \Phi^{L}_{\operatorname{core}}(\mathrm{III}).
\]
So $\operatorname{id}$ gives morphisms $\mathrm{dlM} \to L$ and $R \to L$ in $\Func(M)$; $L$ therefore enriches both schemas at once, resolving their disagreement about $\mathrm{III}$ by admitting both $\mathrm{T}$ and $\mathrm{D}$.
\end{example}

\begin{remark}
\label{rem:no-compatibility-required}
In Definition~\ref{def:func-fibration}, notice that no compatibility is required between the specialized function sets $F_{\operatorname{spec}}$ and $F'_{\operatorname{spec}}$. One could instead require the existence of a map $g_{\spec}:F_{\operatorname{spec}}\to F'_{\operatorname{spec}}$ making the diagram
\[\begin{tikzcd}
	{F_{\operatorname{spec}}} && {F'_{\operatorname{spec}}} \\
	\\
	{F_{\operatorname{core}}} && {F'_{\operatorname{core}}}
	\arrow["{g_{\operatorname{spec}}}", from=1-1, to=1-3]
	\arrow["\varphi"', from=1-1, to=3-1]
	\arrow["{\varphi'}", from=1-3, to=3-3]
	\arrow["{g_{\operatorname{spec}}}"', from=3-1, to=3-3]
\end{tikzcd}\]
commute, and further such that, for every $A \in Y^{(\mathbf c)}$,
\[
g_{\spec}(\Phi(A)) \subseteq \Phi'(A).
\]
We do not impose this stronger condition. The specialized functions merely refine the core harmonic functions by specifying possible substitutions for them, whereas harmonic syntax itself operates only on the core functions, as we will see in \S~\ref{sub:harmonic-practices}. Consequently, the induced assignment $\Phi_{\operatorname{core}}=\varphi\circ\Phi$ is the syntactically relevant datum, and only the inclusion
\[
g_{\core}(\Phi_{\operatorname{core}}(A)) \subseteq \Phi'_{\operatorname{core}}(A)
\]
is required to hold for morphisms---no comparison of $\Phi(A)$ and $\Phi'(A)$ themselves, nor any $g_{\spec}$, is called for. The map $\varphi:F_{\spec} \to F_{\core}$ then records the substitutional structure associated with a given functional assignment.
\end{remark}

We now show that $\Func$ acts as a functor
\[
\Func:\int\OrbCovTR\longrightarrow\Cat.
\]
It sends each modal orbit cover $M$ to the category $\Func(M)$. Given a morphism
\[
(\Delta,\bfT_p,\bfR_q,f):
([X],Y^{(\mathbf c)},\mu_i)
\longrightarrow
([W],Z^{(\mathbf d)},\mu_j),
\]
we define the functor
\[
\Func(\Delta,\bfT_p,\bfR_q,f):
\Func([X],Y^{(\mathbf c)},\mu_i)
\longrightarrow
\Func([W],Z^{(\mathbf d)},\mu_j)
\]
as follows:
\begin{enumerate}
\item On objects, $(\mathcal{F},\Phi)
\mapsto
(\mathcal{F}, \Phi_{\hat f_{i,j}})$, where
\begin{equation}
\label{eq:push-phi-forward}
\Phi_{\hat f_{i,j}} \coloneqq\Phi\circ (\hat f_{i,j})^{-1}:
Z^{(\mathbf d)}\longrightarrow\mathcal P(F_{\spec}),
\end{equation}
with $\hat f_{i,j}$ as in Proposition~\ref{prop:induced-chord-bijection}. Equivalently, $\Phi_{\hat f_{i,j}}$ is the unique map making the diagram
\[\begin{tikzcd}
	{Y^{(\mathbf c)}} && {Z^{(\mathbf d)}} \\
	\\
	& {\mathcal{P}(F_{\spec})}
	\arrow["{\hat f_{i,j}}", from=1-1, to=1-3]
	\arrow["\Phi"', from=1-1, to=3-2]
	\arrow["{\Phi_{\hat f_{i,j}}}", from=1-3, to=3-2]
\end{tikzcd}\]
commute.
\item On a morphism $g_{\core} : F_{\core} \to F'_{\core}$, it acts by $g_{\core} \mapsto g_{\core}$.
\end{enumerate}

It remains only to verify that the assignment on morphisms is well defined. This follows immediately, since
\begin{align*}
\Phi_{\hat f_{i,j}, \core}
& =
\varphi\circ\Phi_{\hat f_{i,j}} \\
& =
\varphi\circ\Phi\circ (\hat f_{i,j})^{-1} \\
& =
\Phi_{\core} \circ (\hat f_{i,j})^{-1},
\end{align*}
and likewise
\[
\Phi'_{\hat f_{i,j}, \core}
=
\Phi'_{\core} \circ (\hat f_{i,j})^{-1}.
\]
Hence, if
\[
g_{\core}(\Phi_{\core}(A))\subseteq\Phi'_{\core}(A)
\]
for every $A\in Y^{(\mathbf c)}$, then
\[
g_{\core}(\Phi_{\hat f_{i,j},\core}(B))
\subseteq
\Phi'_{\hat f_{i,j},\core}(B)
\]
for every $B\in Z^{(\mathbf d)}$.

Thus, applying the Grothendieck construction to $\Func$ yields a fibration
\[
\int\Func\longrightarrow\int\OrbCovTR.
\]

\begin{example}
Let $X=\{0,2,4,5,7,9,11\}$ be the C-major scale, with the Ionian mode $\mu_1$ and Lydian mode $\mu_4$. Let $\mathbf c = (2,2,3)$ and $\mathbf d = (2,1,1,3)$, so $X^{(\mathbf c)}$ is the tertian orbit cover and $X^{(\mathbf d)}$, with $\hat{\mathbf d}(x)=\{x,x+2,x+3,x+4\}\pmod 7$, consists of root-third-fourth-fifth sonorities.

Take $u=5$. We seek $b\in\Z_7$ such that $f(x)=5x+b$ satisfies $f(\hat{\mathbf c}(0))\subseteq\hat{\mathbf d}(0)$, i.e.\ $5\cdot\{0,2,4\}+b\subseteq\{0,2,3,4\}$. Since $5\cdot\{0,2,4\}=\{0,10,20\}\pmod 7=\{0,3,6\}$, checking each $b\in\Z_7$ shows only $b=4$ works: $\{0,3,6\}+4=\{4,0,3\}\subseteq\{0,2,3,4\}$, while every other value of $b$ fails to land inside $\{0,2,3,4\}$. Thus
\[
f(x) = 5x+4
\]
is the unique element of $\mathrm{Hom}_{\mathbf C(7)}(\mathbf c,\mathbf d)$ with linear part $u=5$.

By Remark~\ref{rem:induced-map-in-composition-notation},
\[
\hat f(\hat{\mathbf c}(z)) = \hat{\mathbf d}(5z-4) = \hat{\mathbf d}(5z+3).
\]
Working through all seven triads, and reading the codomain's Roman numerals relative to $\mu_4$'s own tonic gives:
\begin{align*}
\mathrm I=\{\mathrm C,\mathrm E,\mathrm G\} &\longmapsto \{\mathrm F,\mathrm A,\mathrm B,\mathrm C\}=\mathrm {I}, \\
\mathrm{II}=\{\mathrm D,\mathrm F,\mathrm A\} &\longmapsto \{\mathrm D,\mathrm F,\mathrm G,\mathrm A\}=\mathrm {VI}, \\
\mathrm{III}=\{\mathrm E,\mathrm G,\mathrm B\} &\longmapsto \{\mathrm B,\mathrm D,\mathrm E,\mathrm F\}=\mathrm {IV}, \\
\mathrm{IV}=\{\mathrm F,\mathrm A,\mathrm C\} &\longmapsto \{\mathrm G,\mathrm B,\mathrm C,\mathrm D\}=\mathrm {II}, \\
\mathrm V=\{\mathrm G,\mathrm B,\mathrm D\} &\longmapsto \{\mathrm E,\mathrm G,\mathrm A,\mathrm B\}=\mathrm {VII}, \\
\mathrm{VI}=\{\mathrm A,\mathrm C,\mathrm E\} &\longmapsto \{\mathrm C,\mathrm E,\mathrm F,\mathrm G\}=\mathrm {V}, \\
\mathrm{VII}=\{\mathrm B,\mathrm D,\mathrm F\} &\longmapsto \{\mathrm A,\mathrm C,\mathrm D,\mathrm E\}=\mathrm {III}.
\end{align*}

Consider the morphism
\[
(\operatorname{id},\bfT_0,\bfR_3,f):
([X],Y^{(\mathbf c)},\mu_1)
\longrightarrow
([X],Z^{(\mathbf d)},\mu_4)
\]
of $\int\OrbCovTR$, and let $(\mathcal F,\Phi)\in\Func([X],Y^{(\mathbf c)},\mu_1)$ be de la Motte's schema from Example~\ref{ex:motte-rohrmeier}:
\[
\begin{aligned}
F_{\core} = F_{\spec} &= \{\mathrm T,\mathrm S,\mathrm D\}, \\
\Phi(\mathrm I)=\Phi(\mathrm{VI}) &= \{\mathrm T\}, \\
\Phi(\mathrm{III})=\Phi(\mathrm V) &= \{\mathrm D\}, \\
\Phi(\mathrm{II})=\Phi(\mathrm{IV}) &= \{\mathrm S\}, \\
\Phi(\mathrm{VII}) &= \varnothing.
\end{aligned}
\]
The pushforward $\Phi_{\hat f_{1,4}}$ transports each function to the (now non-obvious) chord its triad maps to:
\begin{align*}
\Phi_{\hat f_{1,4}}(\{\mathrm F,\mathrm A,\mathrm B,\mathrm C\}) &= \{\mathrm T\}, &
\Phi_{\hat f_{1,4}}(\{\mathrm D,\mathrm F,\mathrm G,\mathrm A\}) &= \{\mathrm S\}, \\
\Phi_{\hat f_{1,4}}(\{\mathrm B,\mathrm D,\mathrm E,\mathrm F\}) &= \{\mathrm D\}, &
\Phi_{\hat f_{1,4}}(\{\mathrm G,\mathrm B,\mathrm C,\mathrm D\}) &= \{\mathrm S\}, \\
\Phi_{\hat f_{1,4}}(\{\mathrm E,\mathrm G,\mathrm A,\mathrm B\}) &= \{\mathrm D\}, &
\Phi_{\hat f_{1,4}}(\{\mathrm C,\mathrm E,\mathrm F,\mathrm G\}) &= \{\mathrm T\}, \\
\Phi_{\hat f_{1,4}}(\{\mathrm A,\mathrm C,\mathrm D,\mathrm E\}) &= \varnothing. &&
\end{align*}
The functor transports functional identity correctly even though the underlying chord correspondence is far from obvious: which chord realizes which function is determined entirely by $\hat f_{1,4}$, not by any resemblance in pitch content.
\end{example}

\subsubsection{Harmonic Functions Dependent on Deformation Families}
\label{subsub:harmonic-functions-dependent-on-deformation-families}

There are two natural ways to extend the harmonic function fibration over $\int\DefFam$. The first is to regard the functional assignment as independent of the deformation family. In this case, an object is a tuple
\[
([X],Y^{(\mathbf c)},\mu_i,\mathcal D,(\mathcal{F},\Phi)),
\]
where the functional assignment does not depend on $\mathcal D$. The assignment $\Phi:Y^{(\mathbf c)}\to\mathcal P(F_{\spec})$ then extends canonically to the deformation chords by composition with the projection
\[
\mathcal{D}(Y^{(\mathbf c)}) \xlongrightarrow{p_{Y^{(\mathbf c)}}} Y^{(\mathbf c)} \xlongrightarrow{\Phi} \mathcal{P}(F_{\spec}),
\]
giving the composite
\[
\Phi\circ p_{Y^{(\mathbf c)}}:
\mathcal D(Y^{(\mathbf c)})
\longrightarrow
\mathcal P(F_{\spec}).
\]
Thus every deformation chord lying over a chord $A$ inherits the same harmonic functions as $A$ (this inheritance will reappear in \S~\ref{subsub:the-category-of-functional-harmonic-systems}, where we form a category of functional harmonic systems). Since the functional assignment is independent of the deformation family, the resulting functor is obtained as in the preceding subsection. Applying the Grothendieck construction therefore yields a fibration
\[
\int\Func\longrightarrow\int\DefFam.
\]

The second approach is to allow substitution chords to carry harmonic functions independently of the base chords from which they arise. In this case we define $\Func([X],Y^{(\mathbf c)},\mu_i,\mathcal D)$ to be the category whose objects are pairs $(\mathcal{F},\Phi)$, where $\mathcal{F}$ is as before, but where 
\[
\Phi:\mathcal D(Y^{(\mathbf c)})\longrightarrow\mathcal P(F_{\spec}) 
\]
is defined directly on the substitution chords. Morphisms are maps $g_{\core}:F_{\core} \to F'_{\core}$
such that 
\[
g_{\core}(\Phi_{\core}(A))\subseteq\Phi'_{\core}(A)
\] 
for every $A\in\mathcal D(Y^{(\mathbf c)})$.

\begin{remark}
\label{re:realizations-of-function-defs}
As in the previous subsection (Definition~\ref{def:realizations-of-function}), each core harmonic function $\alpha\in F_{\core}$ determines its realization set of the harmonic function $\alpha$. We write
\begin{equation}
\label{eq:realizations-of-function-defs}
\operatorname{Real}_\Phi(\alpha) \coloneqq \{ A \in \mathcal{D}(Y^{(\mathbf c)}) \mid \alpha \in \Phi_{\core}(A) \}. 
\end{equation}
Unlike the previous construction, the functional assignment is now defined directly on deformation chords. Consequently, distinct substitution chords lying over the same base chord may realize different harmonic functions.
\end{remark}

The natural question is whether this construction extends canonically to a functor
\[
\Func(\Delta,\bfT_p,\bfR_q,f,\iota):
\Func([X],Y^{(\mathbf c)},\mu_i,\mathcal D)
\longrightarrow
\Func([W],Z^{(\mathbf d)},\mu_j,\mathcal E).
\]
Unlike the previous construction, the answer is, in general, no. The reason lies in the injection that a morphism of $\int\DefFam$ lying over $\Delta$ carries \eqref{eq:def-on-objects},
\[
\begin{matrix}
\iota : & \mathcal D &  \rightarrowtail & \mathcal E, \\
& \Gamma & \mapsto & \Gamma\circ\Delta^{-1},
\end{matrix}
\] 
which extends to a map on the associated chord sets,
\begin{equation}
\label{eq:iota-defs}
\begin{matrix}
\iota : & \mathcal D(Y^{(\mathbf c)})& \rightarrowtail & \mathcal E(Z^{(\mathbf d)}) \\
& \Gamma(A) & \mapsto & (\Gamma\circ\Delta^{-1})\bigl(\hat f_{i,j}(A)\bigr). \end{matrix}
\end{equation}
This extension is injective but need not be surjective. Consequently there is no canonical extension $\Phi_\iota$ making 
\[\begin{tikzcd}
	{\mathcal{D}(Y^{(\mathbf c)})} && {\mathcal{E}(Z^{(\mathbf d)})} \\
	\\
	& {\mathcal{P}(F)}
	\arrow["\iota", tail, from=1-1, to=1-3]
	\arrow["\Phi"', from=1-1, to=3-2]
	\arrow["{\Phi_\iota}", from=1-3, to=3-2]
\end{tikzcd}\]
commute. In contrast to the previous subsection (see \eqref{eq:push-phi-forward}), there is no inverse to $\iota$ along which $\Phi$ may be transported. Thus this construction does not, by itself, define a functor over $\int\DefFam$. To recover functoriality, one must specify a canonical extension of $\Phi$ to the substitution chords lying outside the image of $\iota$. We now describe one such construction, drawing on the deformation-independent pushforward $\Phi_{\hat f_{i,j}}$ of \S~\ref{subsub:harmonic-functions-independent-of-deformation-families}.

Let $\iota:\mathcal D(Y^{(\mathbf c)})\rightarrowtail\mathcal E(Z^{(\mathbf d)})$ be as in \eqref{eq:iota-defs}. Write $\operatorname{im}(\iota)\subseteq\mathcal E(Z^{(\mathbf d)})$ for its image, and let
\[
\Phi_{\operatorname{im}(\iota)}
:
\operatorname{im}(\iota)
\longrightarrow
\mathcal P(F_{\spec})
\]
be the unique map satisfying $\Phi_{\operatorname{im}(\iota)}\circ\iota=\Phi$. Define the set of \emph{new deformation morphisms} by
\[
\operatorname{new}(\iota)
\coloneqq
\mathcal E(Z^{(\mathbf d)})
\setminus
\operatorname{im}(\iota).
\]
Thus every deformation morphism in $\mathcal E(Z^{(\mathbf d)})$ lies either in $\operatorname{im}(\iota)$ or in $\operatorname{new}(\iota)$. Restricting the projection $p_{Z^{(\mathbf d)}}:\mathcal E(Z^{(\mathbf d)}) \to Z^{(\mathbf d)}$ to the new deformation morphisms yields a map
\[
p_{\operatorname{new}}
:
\operatorname{new}(\iota)
\longrightarrow
Z^{(\mathbf d)}.
\]
We assign these new deformed harmonies the deformation-independent value already available from the earlier construction: for every $B\in Z^{(\mathbf d)}$, the pushforward 
\[
\Phi_{\hat f_{i,j}}(B) = \Phi\bigl(\hat f_{i,j}^{-1}(B)\bigr)
\] 
is defined on all of $Z^{(\mathbf d)}$, independently of $\iota$. We therefore define
\[
\operatorname{new}(\Phi) \coloneqq \Phi_{\hat f_{i,j}} \circ p_{\operatorname{new}}
:
\operatorname{new}(\iota)
\longrightarrow
\mathcal P(F_{\spec}).
\]
Since $\mathcal E(Z^{(\mathbf d)}) = \operatorname{im}(\iota) \sqcup \operatorname{new}(\iota)$, the maps $\Phi_{\operatorname{im}(\iota)}$ and $\operatorname{new}(\Phi)$ assemble uniquely into a map
\[
\Phi_{\operatorname{im}(\iota)}
\sqcup
\operatorname{new}(\Phi):
\mathcal E(Z^{(\mathbf d)})
\longrightarrow
\mathcal P(F_{\spec}).
\]
This yields a canonical extension of $\Phi$ along $\iota$: every deformation morphism already tracked by $\mathcal D$ (transported via $\Delta$) keeps its specific, deformation-dependent value, while every genuinely new deformation morphism in $\mathcal E$ inherits the generic value its underlying chord would carry regardless of deformation.

Consequently, $\Func$ extends canonically to a functor
\[
\Func:\int\DefFam\longrightarrow\Cat,
\]
and applying the Grothendieck construction yields a fibration
\[
P_\Func : \int\Func\longrightarrow\int\DefFam.
\]
Thus every object of $\int\Func$ consists of all the preceding layers of structure together with a functional assignment. Explicitly, the objects of $\int\Func$ are of the form
\[
([X],Y^{(\mathbf c)},\mu_i,\mathcal D,(\mathcal{F},\Phi)).
\]

Collecting the constructions of this section, we obtain the hierarchy of fibrations as present in Figure~\ref{fig:fibration-hierarchy}.

\section{Formalizing Harmonic Practices}
\label{sec:formalizing-harmonic-practices}

Section~\ref{sec:a-hierarchy-of-fibrations-over-heptatonic-scales} organizes the available harmonic data into a hierarchy of fibrations. We now use this data to define \emph{harmonic practices}, which generalize the practice of functional harmony in common-practice tonality. Arriving at this definition takes some preliminary work; we do not state it formally until \S~\ref{sub:harmonic-practices}. 

Unlike the preceding constructions, those introduced below are not obtained as total categories of fibrations. Instead, they are assembled from the fibrational data, but their formalization is driven primarily by musical considerations rather than by the categorical organization of harmonic data. This difference in approach is intentional and reflects a change in priority from mathematical coherence and systematicity to musical relevance. The organization of a harmonic practice cannot be derived solely from the internal logic of the mathematical framework, but must instead be informed by the musical conventions it is intended to model, which are often historically and musically contingent. The hierarchy of fibrations is not itself a theory of harmonic practice; rather, it serves as a repository of musical structure. It organizes the scales, modes, orbit covers, deformation families, and harmonic function data into a coherent mathematical framework, together with the canonical structure-preserving maps between them. A harmonic practice, by contrast, is obtained by selecting and interpreting particular portions of this structure according to musical principles.

Thus, one should not expect the conventions governing a harmonic practice to arise solely from the categorical structure afforded by the fibrations. Many musical conventions are inherently local. For example, the harmonic conventions governing the major mode do not automatically transport to the minor mode, despite the existence of canonical mode isomorphisms between them (which do canonically transport objects in the categories fibered above the modal categories). The mathematical transformations encoded by the fibrations preserve abstract structure, whereas the musical transformations relevant to a harmonic practice are determined by compositional considerations that need not be functorial.

\subsection{Modal Orbit Cover Systems}
\label{sub:modal-orbit-cover-systems}

The first component of a harmonic practice is the collection of scales, mode types, and orbit covers on which it is based. We formalize this data as a \emph{modal orbit cover system}. Common-practice tonality provides the motivating example.

First, common-practice tonality takes place within the translation orbit $[X]$ of the diatonic scale. Next, the modes corresponding to keys are the Ionian and Aeolian modes, namely $\mu_1$ and $\mu_6$. Thus we first consider the full subgroupoid
\[
[X]_I \longhookrightarrow \int\ActTR, \qquad I = \{1,6\},
\]
whose objects are the 24 major and minor modes, and whose morphisms are translation-rotation pairs.

Common-practice tonality is furthermore based on tertian trichords, which arise from the interval composition $\mathbf{c} = (2,2,3) \in \mathbf{C}(7,3)$. Thus we consider the full subgroupoid
\[
[X^{(\mathbf{c})}]_I \longhookrightarrow \int \OrbCovTR,
\]
whose objects are the modal orbit covers $(Y^{(\mathbf{c})},\mu_i)$ with $i\in I$. Its morphisms are again the translation-rotation pairs $(\bfT_p, \bfR_q)$. Hence $[X^{(\mathbf{c})}]_I$ describes the major and minor keys together with their associated tertian triads. This motivates the following definition.

\begin{definition}[Modal Orbit Cover System]
\label{def:modal-orbit-cover-system}
A \emph{modal orbit cover system} is a triple $([X],\mathbf{c},I)$, where:
\begin{enumerate}
\item $[X]$ is a translation orbit of heptatonic scales, 
\item $\mathbf{c}\in\mathbf{C}(7,k)$ is an interval composition with $k\le7$, and 
\item $I\subseteq\mathbb{Z}_7$ is a collection of mode types.
\end{enumerate} 
It determines the full subgroupoid
\[
[X^{(\mathbf{c})}]_I \longhookrightarrow \int \OrbCovTR,
\]
whose objects are the modal orbit covers $(Y^{(\mathbf{c})},\mu_i)$ with $i\in I$.
\end{definition}

\subsection{Harmonic Vocabularies}
\label{sub:harmonic-vocabularies}

A modal orbit cover system specifies the scales, modes, and orbit covers available in a harmonic practice. The next ingredient specifies how tones may be altered with the use of deformation families. 

Different mode types generally admit different deformation conventions. For example, in the minor mode it is common to raise the seventh scale degree to obtain a leading tone, whereas in the major mode the corresponding alteration (now the raised fifth degree) is generally not employed except in special contexts such as tonicization. Thus, while different translations of a fixed mode type naturally share the same deformation conventions, different mode types need not.

Observe that such an assignment does not arise by lifting the modal orbit cover system into the deformation-family fibration. Morphisms in $\DefFam([X])$ are inclusions of deformation families, so a lift would require $\mathcal{D}(i)=\mathcal{D}(j)$ for every pair of mode types $i,j\in I$. Musically, this is unreasonable, as deformation conventions are determined locally within the context of each mode rather than by canonical transport along mode isomorphisms. The familiar distinction between major and minor leading tones is a simple example of this phenomenon. Accordingly, we simply assign a deformation family to each mode type independently. This equips every modal orbit cover $(Y^{(\mathbf{c})},\mu_i)$ with deformation data, producing triples $(Y^{(\mathbf{c})},\mu_i,\mathcal{D}(i))$. These are objects of $\int\DefFam$, but we deliberately do not use the morphisms inherited from that category. Instead, the transformations between such triples are simply the underlying translation-rotation pairs between the modal orbit covers, with the deformation-family assignments regarded as fixed local data. Thus we have transformations
\[
(\mathbf{T}_p,\mathbf{R}_q):
(Y^{(\mathbf{c})},\mu_i,\mathcal{D}(i))
\longrightarrow
(\mathbf{T}_p(Y^{(\mathbf{c})}),\mu_{i+q},\mathcal{D}(i+q)),
\]
without requiring any compatibility between the deformation-family components $\mathcal{D}(i)$ and $\mathcal{D}(i+q)$.

This data determines the collection of chords available within the harmonic practice and therefore constitutes its harmonic vocabulary.

\begin{definition}[Harmonic Vocabulary]
A \emph{harmonic vocabulary} is a modal orbit cover system $([X],\mathbf{c},I)$ equipped with a deformation-family assignment
\[
\mathcal{D}:I\longrightarrow
\mathcal{P}(\Def(X)).
\]
We thus specify a harmonic vocabulary as a quadruple $\mathbf{H} = ([X],\mathbf{c},I,\mathcal{D})$.

For each modal orbit cover $(Y^{(\mathbf{c})},\mu_i)$, define its \emph{local harmonic vocabulary} by
\begin{equation}
\label{eq:local-vocabulary}
V_{Y,i}(\mathbf{H})
\coloneqq
\mathcal{D}(i)\bigl(Y^{(\mathbf{c})}\bigr) = \coprod_{\Delta \in \mathcal{D}(i)} \Delta(Y^{(\mathbf c)}).
\end{equation}
Thus $V_{Y,i}(\mathbf{H})$ is the set of harmonies available in the modal orbit cover $(Y^{(\mathbf{c})},\mu_i)$ once it is equipped with the deformation family $\mathcal{D}(i)$. The \emph{global harmonic vocabulary} is then
\begin{equation}
\label{eq:global-vocabulary}
V(\mathbf{H})
\coloneqq
\coprod_{(Y,i)\in[X]\times I}
V_{Y,i}(\mathbf{H}),
\end{equation}
the disjoint union of the local harmonic vocabularies.
\end{definition}

\begin{example}
Let $[X]$ be the translation orbit of the diatonic scale, let $\mathbf c=(2,2,3)$, and let $I=\{1,6\}$ index the Ionian and Aeolian modes. We illustrate both $\mathcal D(1)$ and $\mathcal D(6)$ on a single representative 
\[
Y=\{\mathrm{C},\mathrm{D},\mathrm{E},\mathrm{F},\mathrm{G},\mathrm{A},\mathrm{B}\}\in[X],
\]
read as $(Y,\mu_1)=$ C major and $(Y,\mu_6)=$ A minor.

For a deformation altering scale degrees $d_1,\ldots,d_k$ relative to mode $i$, we write $(\sharp d_1\cdots\sharp d_k)_i$ (or with $\flat$, as appropriate). The subscript records which mode's degree-counting convention is in force, since the same scale-degree numeral names different pitch classes in different modes. For example, in the C-major diatonic collection, $(\sharp4)_1$ and $(\sharp4)_6$ raise F and D, respectively.

\textbf{Mode 1 (Ionian).} Define
\[
\mathcal D(1) = \{ \mathrm{id}_X,\ (\sharp 4)_1,\ (\flat 7)_1 \}.
\]
Both nontrivial deformations carry the scale itself into an adjacent key:
\[
(\sharp4)_1(Y)=\{ \mathrm{C},\mathrm{D},\mathrm{E},\mathrm{F}^\sharp,\mathrm{G},\mathrm{A},\mathrm{B} \}
\] 
is the G major collection, and
\[
(\flat7)_1(Y)=\{\mathrm{C},\mathrm{D},\mathrm{E},\mathrm{F},\mathrm{G},\mathrm{A},\mathrm{B}^\flat\}
\] 
is the F major collection. Correspondingly, they supply pivot chords into those keys:
\begin{itemize}
\item $(\sharp4)_1$ sends $\mathrm{ii}=\{\mathrm{D},\mathrm{F},\mathrm{A}\}$ to $\{\mathrm{D},\mathrm{F}^\sharp,\mathrm{A}\}$, which is the dominant of G major;
\item $(\flat7)_1$ sends $\mathrm{iii}=\{\mathrm{E}, \mathrm{G}, \mathrm{B}\}$ to $\{\mathrm{E},\mathrm{G},\mathrm{B}^\flat\}$, which is the diminished $\mathrm{vii}^\circ$ of F major.
\end{itemize}

\textbf{Mode 6 (Aeolian).} Define
\[
\mathcal D(6) = \{ \mathrm{id}_X,\ (\sharp 7)_6,\ (\sharp 6\sharp 7)_6,\ (\sharp 3)_6,\
(\sharp 4\sharp 6)_6\}.
\]
The first two furnish the harmonic- and melodic-minor variants: $(\sharp7)_6$ sends $\mathrm v=\{\mathrm E, \mathrm G, \mathrm B\}$ to the major triad $\mathrm V = \{\mathrm E, \mathrm G^\sharp, \mathrm B\}$, while $(\sharp6\sharp7)_6$ additionally raises the sixth degree, carrying $Y$ to the ascending melodic-minor collection $\{\mathrm A, \mathrm B, \mathrm C, \mathrm D, \mathrm E, \mathrm F^\sharp, \mathrm G^\sharp\}$. The remaining two generate secondary dominants within the key:
\begin{itemize}
\item $(\sharp3)_6$ sends $\mathrm i=\{\mathrm A, \mathrm C, \mathrm E\}$ to $\{\mathrm A, \mathrm C^\sharp, \mathrm E\}$, the major triad, the dominant of $\mathrm{iv}$;
\item $(\sharp4\sharp6)_6$ sends $\mathrm{ii^\circ}=\{\mathrm B, \mathrm D, \mathrm F\}$ to $\{\mathrm B, \mathrm D^\sharp, \mathrm F^\sharp\}$, the dominant of $\mathrm v$.
\end{itemize}

Thus $([X],\mathbf c,I,\mathcal D)$ furnishes, for the major mode, pivot chords toward the dominant and subdominant keys, and, for the minor mode, the harmonic and melodic variants together with secondary dominants of the subdominant and dominant scale degrees.
\end{example}

In practice, deformation families are often specified indirectly through general musical principles rather than by listing deformations individually. For example, the principle ``introduce a leading tone'' applies uniformly to every mode of the diatonic scale, although the resulting deformation $\Delta \in \Def(X)$ depends on the mode under consideration. More generally, one may regard a musical principle as assigning, to each mode type, a collection of deformations that realize the desired musical effect. Such principles therefore naturally give rise to maps $I\to\mathcal{P}(\Def(X))$, and hence determine harmonic vocabularies.

\begin{example}
\label{ex:tonicize}
Consider the principle \emph{tonicize}---i.e., deform a chord so that it becomes a chromatic translate of the tonic triad, allowing it to serve as the tonic of another orbit cover within the same translation orbit. This foreshadows, in a simplified single-mode setting, the formalism in \S~\ref{para:the-scale-degree-level} (in particular, Definition~\ref{def:secondary-scale-degree-symbols}); here we work out the underlying idea concretely.

Let
\[
X = \{ \mathrm G^\flat, \mathrm A, \mathrm B^\flat, \mathrm C, \mathrm D^\flat,
\mathrm E^{\flat \flat}, \mathrm F \},
\]
let $[X]$ be its translation orbit, and let $\mathbf c=(3,3,1)$, forming the modal orbit cover $(X^{(\mathbf c)},\mu_4)$, whose tonic is $\mathrm G^\flat$, as represented in Figure~\ref{fig:tonicize-cover}.

The tonic triad is $\mathrm I=\{\mathrm G^\flat,\mathrm C,\mathrm F\}$. For each remaining chordal degree $R\in\{\mathrm{II},\ldots,\mathrm{VII}\}$, we seek the deformations $\Delta$ making $\Delta(R)$ a chromatic translate of $\mathrm I$---that is, the tonic of some transposed orbit cover $(\mathbf T_p(X)^{(\mathbf c)}, \mu_4)$. This defines the map
\[
\operatorname{Piv}: \{\mathrm{II},\ldots,\mathrm{VII}\}\longrightarrow\mathcal P(\Def(X)),
\]
\begin{equation}
\label{eq:piv}
\operatorname{Piv}(R)
\coloneqq
\bigl\{
\Delta\in\Def(X)
\bigm|
\exists\, p
\text{ such that }
\Delta(R) = \mathbf T_p(\mathrm I)
\bigr\}.
\end{equation}

For instance, consider the following two cases.
\begin{itemize}
\item Let $R=\mathrm{IV}=\{\mathrm C,\mathrm F,\mathrm B^\flat\}$, and let $\Delta=(\flat4)_4$
lower the fourth scale degree of mode $4$. Then
\[
\Delta(\mathrm{IV}) = \{\mathrm C^\flat,\mathrm F,\mathrm B^\flat\} = \mathbf T_5(\mathrm I),
\]
so $(\flat4)_4\in\operatorname{Piv}(\mathrm{IV})$: flatting the root of the IV chord turns it into a pivot toward the mode transposed up a perfect fourth.
\item Let $R=\mathrm{VI}=\{\mathrm E^{\flat\flat},\mathrm A,\mathrm D^\flat\}$, and let
$\Gamma=(\flat2)_4$ lower the second scale degree of mode $4$. Then
\[
\Gamma(\mathrm{VI}) = \{\mathrm E^{\flat\flat},\mathrm A^\flat,\mathrm D^\flat\}
= \mathbf T_8(\mathrm I),
\]
so $(\flat2)_4\in\operatorname{Piv}(\mathrm{VI})$.
\end{itemize}
The remaining values of $\operatorname{Piv}$ are computed similarly.
\end{example}

\begin{figure}
\centering
        \includegraphics[width=0.55\linewidth]{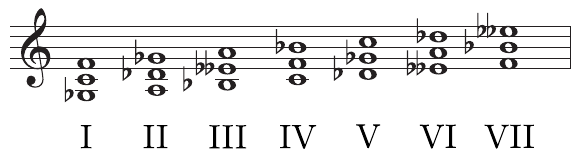}
\caption{The modal orbit cover $(X^{(\mathbf c)},\mu_4)$ of Example~\ref{ex:tonicize}, generated by the interval composition $\mathbf c=(3,3,1)$ over $X=\{\mathrm G^\flat,\mathrm A,\mathrm B^\flat,\mathrm C,\mathrm D^\flat, \mathrm E^{\flat\flat},\mathrm F\}$, with tonic $\mathrm G^\flat$.}
\label{fig:tonicize-cover}
\end{figure}

\begin{remark}
The deformation component of a harmonic vocabulary naturally admits refinement. For example, the historical development of common-practice tonality is accompanied by the gradual introduction of new chromatic alterations. Thus, the lowered second and sixth scale degree deformation is largely absent from earlier tonal practice, but later becomes commonplace through the use of the Neapolitan chord. Accordingly, the set of deformation-family assignments $I\to\mathcal P(\Def(X))$ carries the pointwise partial order
\[
\mathcal D\le\mathcal D'
\quad\Longleftrightarrow\quad
\mathcal D(i)\subseteq\mathcal D'(i)
\qquad\text{for all }i\in I.
\]
This induces a partial order on harmonic vocabularies:
\[
([X],\mathbf c,I,\mathcal D)
\le
([X],\mathbf c,I,\mathcal D').
\]
According to this ordering, larger harmonic vocabularies admit a greater collection of permissible deformations, and hence a richer collection of available harmonic substitutions.
\end{remark}

\subsection{Functional Harmonic Systems}
\label{sub:functional-harmonic-systems}

Next, we assign harmonic functions to the harmonic vocabularies. Once again, common-practice tonality serves as the motivating example. In its simplest form, it employs the three principal harmonic functions T, S, and D, corresponding to tonic, subdominant, and dominant.

As with deformation families, it is natural for harmonic-function assignments to be invariant under translation within a fixed mode type. Thus, every translation of a major dominant triad is again assigned dominant function, and similarly for the other harmonic functions.

Unlike translations, however, mode rotations need not preserve harmonic function, although in some harmonic systems it is reasonable for them to. For example, suppose the I chord is assigned tonic function, the IV chord subdominant function, and the V chord dominant function, and further suppose that every chord sharing two tones with one of these principal representatives is also assigned the corresponding function (as in the assignment $\Phi_{\core}^L$ of Example~\ref{ex:motte-rohrmeier}). Since chords may carry more than one harmonic function, these assignments need not be mutually exclusive. Because common-tone relationships are preserved under mode rotation, the resulting harmonic-function assignment is likewise preserved under rotation.

In common-practice tonality, however, harmonic functions are generally not preserved under mode rotation. For instance, the tonic counter-parallel function Tcp is realized by the iii chord in major but by the VI chord in minor. Thus the assignment of harmonic functions depends on the mode type rather than arising by canonical transport between modes. Consequently, harmonic functions should be assigned independently for each mode type.

To describe such assignments, let $F_{\core}$ denote a set of core harmonic functions and let $F_{\spec}$ denote a collection of specialized harmonic functions, as discussed in \S~\ref{sub:harmonic-function-fibrations}. Now recall the projection
\[
p_{Y^{(\mathbf c)}}:
\mathcal D(i)(Y^{(\mathbf c)})
\longrightarrow
Y^{(\mathbf c)}
\]
from~\eqref{eq:proj-orb-cov}. Via the canonical identification
\[
\begin{matrix}
\mu_i : & Y^{(\mathbf c)} & \xlongrightarrow{\cong} & \mathbb Z_7^{(\mathbf c)} \\
& A & \longmapsto & \{\mu_i(a)\mid a\in A\},
\end{matrix}
\]
this projection transports to the scale-degree representation as the projection
\[
p_i : \coprod_{\Delta\in\mathcal D(i)} \mathbb Z_7^{(\mathbf c)}
\longrightarrow
\mathbb Z_7^{(\mathbf c)},
\]
which restricts to the identity on each copy of $\Z_7^{(\mathbf{c})}$ indexed by a deformation $\Delta \in \mathcal{D}(i)$, making the following diagram commute.
\[\begin{tikzcd}
	{\mathcal{D}(i)(Y^{(\mathbf{c})})} && {Y^{(\mathbf{c})}} \\
	\\
	{\coprod\limits_{\Delta \in \mathcal{D}(i)} \Z_7^{(\mathbf{c})}} && {\Z_7^{(\mathbf{c})}}
	\arrow["{p_{Y^{(\mathbf{c})}}}", from=1-1, to=1-3]
	\arrow["{\mu_i}"', from=1-1, to=3-1]
	\arrow["{\mu_i}", from=1-3, to=3-3]
	\arrow["{p_i}"', from=3-1, to=3-3]
\end{tikzcd}\]
The functional data for the mode type $i$ is then specified by a map
\[
\Phi_i:
\coprod_{\Delta\in\mathcal D(i)}
\mathbb Z_7^{(\mathbf c)}
\longrightarrow
\mathcal P(F_{\spec}),
\]
assigning to each chord and chord substitution in the scale-degree representation a collection of specialized harmonic functions. Since the domain is expressed in terms of the abstract scale-degree representation $\mathbb Z_7^{(\mathbf c)}$ rather than concrete pitch classes, every translation of the same mode type inherits the same functional assignment.

Now let
\begin{equation}
\label{eq:def-degree-rep}
\mathcal{D}(i)(\Z_7^{(\mathbf c)}) \coloneqq \coprod_{\Delta \in \mathcal{D}(i)}\Z_7^{(\mathbf c)}.
\end{equation}
For each mode type $i\in I$, define
\begin{equation}
\label{eq:F(i)}
\mathbf F(i)
\coloneqq
(\mathcal P(F_{\spec}))^{\mathcal{D}(i)(\Z_7^{(\mathbf c)})}.
\end{equation}
Thus $\mathbf F(i)$ is the set of all possible functional assignments for the mode type $i$. A functional assignment for the entire harmonic vocabulary is therefore a family
\begin{equation}
\label{eq:functional-assignment-family}
\Phi\in\prod_{i\in I}\mathbf F(i),
\end{equation}
whose component at $i$ is a map
\[
\Phi_i:
\mathcal{D}(i)(\Z_7^{(\mathbf c)})
\longrightarrow
\mathcal P(F_{\spec}).
\]
Equivalently, $\Phi$ specifies, for each mode type, a function assigning a collection of specialized harmonic functions to every chord and chord substitution in an orbit cover. Again, since the domain is expressed in terms of the scale-degree representation $\mathbb Z_7^{(\mathbf c)}$ rather than concrete pitch classes, every translation of the same mode type inherits the same functional assignment.

\begin{definition}[Functional Harmonic System]
Let $\mathcal{F} = (F_{\core}, F_{\spec}, \varphi)$ be a harmonic function schema. A \emph{functional harmonic system} is a harmonic vocabulary  $\mathbf{H} = ([X], \mathbf{c}, I, \mathcal{D})$ equipped with a functional assignment 
\[
\Phi \in \prod_{i \in I} \mathbf{F}(i). 
\]
\end{definition}

\begin{example}[Secondary Functions]
\label{ex:secondary-functions}
We now revisit Example~\ref{ex:secondary-chords}, where the left-sided deformation pivots $\mathcal D(Y^{(\mathbf c)})\cap Z^{(\mathbf c)}$ recover the secondary chords of the target key.

Let
\[
C=(Y^{(\mathbf c)},\mu_1,\mathcal D)
\qquad\text{and}\qquad
G=(Z^{(\mathbf c)},\mu_1,\mathcal D)
\]
be the deformation-equipped modal orbit covers for C major and G major, where
$\mathcal D(1)$ consists of the single deformation that raises the fourth degree of the major mode. Define the harmonic function schema $\mathcal F=(F_{\core},F_{\spec},\varphi)$, where
\[
F_{\core}
=
\{\mathrm T,\mathrm S,\mathrm D\},
\qquad
F_{\spec}
=
\{\mathrm T,\mathrm{Tp},
\mathrm S,\mathrm{Sp},
\mathrm D,\mathrm{Dp}\},
\]
and $\varphi:F_{\spec}\to F_{\core}$ sends each principal function and its relative counterpart to the corresponding core function. Let
\[
\Phi_1:\Z_7^{(\mathbf c)}\longrightarrow\mathcal P(F_{\spec})
\]
be an ordinary common-practice functional assignment for the major mode. 

Then the dominant realization set
\[
\operatorname{Real}_{\Phi_1}(\mathrm D)
\subseteq
Z^{(\mathbf c)}
\]
consists of all chords in G major realizing the dominant function. Intersecting this realization set with the deformation pivots selects precisely those substitution chords that realize a dominant function in the target key. Equivalently, the secondary dominant chords are characterized by the pullback
\[\begin{tikzcd}
	{\mathrm{SecDom}} && {\mathcal{D}(Y^{(\mathbf c)}) \cap Z^{(\mathbf c)}} \\
	\\
	{\operatorname{Real}_{\Phi_1}(\mathrm{D})} && {Z^{(\mathbf c)}}
	\arrow[hook, from=1-1, to=1-3]
	\arrow[hook, from=1-1, to=3-1]
	\arrow[hook, from=1-3, to=3-3]
	\arrow[hook, from=3-1, to=3-3]
\end{tikzcd}\]
Thus secondary dominant chords are precisely those deformation pivots whose harmonic function in the target key is dominant (we will see a slightly different and more systematic method for deriving secondary chords in \S~\ref{para:the-scale-degree-level}).
\end{example}

\subsection{Harmonic Practices}
\label{sub:harmonic-practices}

The final component of a harmonic practice is a syntax, specifying which harmonic progressions are permitted among the harmonies provided by a given harmonic vocabulary. Since harmonic progressions are finite sequences of harmonies, the natural ambient object is the free monoid on the alphabet of available harmonies.

Let $\mathbf H = ([X],\mathbf c,I,\mathcal D,(\mathcal F,\Phi))$ be a functional harmonic system, and therefore $V(\mathbf{H})$ is the set of all harmonies in the harmonic vocabulary. The free monoid $(V(\mathbf{H}))^\ast$ then consists of all finite harmonic progressions. 

At the most general level, a harmonic syntax is simply a subset $\Sigma \subseteq (V(\mathbf{H}))^\ast$, whose elements are the admissible harmonic progressions. In practice, however, one wishes to describe such a syntax by means of a comparatively small collection of generative rules rather than by specifying an arbitrary subset of $(V(\mathbf{H}))^\ast$. For example, Rohrmeier~\cite{rohrmeier2011towards} showed that the syntax of common-practice tonality admits a presentation as a context-free grammar. We take his grammar as the starting point for a generalization to arbitrary functional harmonic systems. We therefore develop the general theory in parallel with Rohrmeier's presentation.  

\subsubsection{The Four Grammatical Levels}
\label{subsub:the-four-grammatical-levels}

Rohrmeier's grammar is organized into four principal levels:
\begin{enumerate}
\item the phrase level,
\item the function level,
\item the scale-degree level, and
\item the surface level.
\end{enumerate}
We now introduce each of these in turn, together with its corresponding generalization in our framework.

\paragraph{The Phrase Level}

In Rohrmeier's grammar, the phrase level employs phrase symbols
\[
\mathbb P=\{piece,\mathrm P\},
\]
together with key symbols
\[
\mathbb K
=
\{\mathrm{Cmaj},\mathrm{Cmin},
\mathrm{C\sharp maj},\mathrm{C\sharp min},
\mathrm{D\flat maj},\mathrm{D\flat min},\ldots\}.
\]

In our generalized setting, the phrase symbols remain unchanged. The key symbols, however, are replaced by symbols representing the modal orbit covers of the harmonic vocabulary. Hence, we redefine
\[
\mathbb K
\coloneqq
\{(Y,i)\mid Y\in[X],\ i\in I\},
\]
where $(Y,i)$ denotes the modal orbit cover $(Y^{(\mathbf c)},\mu_i)$.

In the special case where $[X]$ is the translation orbit of the diatonic scale, $\mathbf c=(2,2,3)$, and $I=\{1,6\}$, the resulting set $\mathbb K$ agrees with Rohrmeier's key symbols, modulo enharmonic equivalence.

\paragraph{The Function Level}
\label{para:the-function-level}

The function level of Rohrmeier employs functional-region symbols
\[
\mathbb R
=
\{\mathrm{T}_\mathrm{R},\mathrm{S}_\mathrm{R},\mathrm{D}_\mathrm{R}\},
\]
together with functional terms
\[
\mathbb F
=
\{\mathrm T,\mathrm S,\mathrm D,
\mathrm{Tp},\mathrm{Sp},\mathrm{Dp},\mathrm{Tcp}\}.
\]

This information is already present in a functional harmonic system, in the following way. For each core harmonic function $\alpha\in F_{\core}$ we introduce a corresponding functional-region symbol $\alpha_{\mathrm R}$, and define
\[
\mathbb R
\coloneqq
\{\alpha_{\mathrm R}\mid\alpha\in F_{\core}\}.
\]
Likewise, the functional terms are simply the specialized harmonic functions:
\[
\mathbb F
\coloneqq
F_{\spec}.
\]

\paragraph{The Scale-Degree Level}
\label{para:the-scale-degree-level}

The scale-degree level of Rohrmeier consists of symbols
\[
\mathbb{S}
=
\{\mathrm{I},\mathrm{II},\ldots,\mathrm{VII},
\ldots,
\mathrm{V}/\mathrm{II},
\ldots,
\mathrm{VII}/\mathrm{II}\},
\]
including both primary and secondary scale-degree symbols. Our goal now is to construct an analogous collection of abstract scale-degree symbols for an arbitrary functional harmonic system.

Via the canonical identification
\[
Y^{(\mathbf c)}
\xlongrightarrow{\cong}
\Z_7^{(\mathbf c)},
\]
the degree map (see \eqref{eq:chordal-degree})
\[
\deg_i:Y^{(\mathbf c)}
\longrightarrow
\Z_7
\]
for mode $i \in I$ transports to the abstract degree map
\[
\deg_i:\Z_7^{(\mathbf c)}
\longrightarrow
\Z_7.
\]
For readability, we identify the elements of $\Z_7$ with the Roman numeral symbols
\[
\mathbf S
=
\{\mathrm I,\mathrm{II},\ldots,\mathrm{VII}\},
\]
writing $\mathrm I$ in place of $0$, $\mathrm{II}$ in place of $1$, and so forth. Thus $\mathbf S$ consists of the primary (i.e.\ non-secondary) scale-degree symbols, without any additional harmonic information.

For every modal orbit cover $(Y^{(\mathbf c)},\mu_i)$, the corresponding chord of degree $R\in\mathbf S$ is obtained by the evaluation map
\[
\operatorname{ev}_{Y,i}
=
\mu_i^{-1}\circ\deg_i^{-1} :
\mathbf S
\longrightarrow
Y^{(\mathbf c)},
\]
so that $\operatorname{ev}_{Y,i}(R)$ is the chord of degree $R$ in the mode $(Y,\mu_i)$.

A bare Roman numeral, however, does not yet determine a generalized scale-degree symbol. Two additional pieces of information are required.

First, the harmonic quality of a Roman numeral depends on the mode. In common-practice tonality this distinction is encoded typographically by writing major triads with uppercase Roman numerals and minor triads with lowercase Roman numerals. This convention is no longer adequate in the present setting, since an orbit cover may contain more than two chord qualities. Accordingly, we regard the mode type as part of the scale-degree symbol itself. Thus, instead of writing $\mathrm{I}$ and $\mathrm{i}$ for the tonic triads in major and minor, respectively, we write $(1,\mathrm I)$ and $(6,\mathrm I)$.

Second, scale-degree symbols may carry chromatic alteration data via the deformations. For example, the Neapolitan chord in major is represented by a deformation lowering the second and sixth scale degrees. Hence, define
\begin{equation}
\label{eq:s-hat}
\hat{\mathbf S}
\coloneqq
\left(\sum_{i\in I}\mathcal D(i)\right)\times\mathbf S,
\end{equation}
as the product of the dependent sum\footnote{For a more thorough treatment of dependent sums, see \cite[\S2.5.5]{flieder2025typetheory}.} $\sum_{i\in I}\mathcal D(i)$ with $\mathbf S$, whose elements are triples $(i,\Delta,R)$, consisting of a mode type $i$, a deformation $\Delta \in \mathcal{D}(i)$, and a Roman numeral $R$. Such a triple is realized in a modal orbit cover via the evaluation map
\begin{equation}
\label{eq:ev}
\begin{matrix}
\operatorname{ev} : & [X]\times\hat{\mathbf S} & \longrightarrow & V(\mathbf H) \\
& (Y,(i,\Delta,R)) & \longmapsto & \Delta(\operatorname{ev}_{Y,i}(R)).
\end{matrix}
\end{equation}

Different deformations may nevertheless realize the same chord. Thus we declare the equivalence relation 
\[
(i,\Delta,R)\sim(i,\Gamma,R)
\]
whenever
\[
\Delta(\operatorname{ev}_{Y,i}(R))
=
\Gamma(\operatorname{ev}_{Y,i}(R))
\]
for some (equivalently, every) translation $Y\in[X]$. The resulting equivalence class records only the restriction of $\Delta$ to the scale degrees occurring in the chord $\operatorname{ev}_{Y,i}(R)$; two deformations that agree there are identified regardless of how they act on scale degrees the chord does not contain. For example, the harmonic-minor and melodic-minor deformations both realize the major V chord in minor: both raise the seventh scale degree, and V contains no other altered degree between them, so their restrictions to V's scale degrees coincide even though melodic minor additionally raises the sixth. These therefore determine the same generalized scale-degree symbol. The generalized primary scale-degree symbols are therefore the equivalence classes $\hat{\mathbf S}/\sim$. 

To obtain a symbolic alphabet, choose a bijection
\[
\hat{\mathbf S}/\sim \;
\xlongrightarrow{\cong}
\mathbb S_{\mathrm{pri}},
\]
where $\mathbb S_{\mathrm{pri}}$ denotes the set of primary scale-degree labels. For example, one may label the class of $(i,\Delta,R)$ by the symbol $R^\Delta_i$.

We now enlarge the alphabet $\mathbb S_{\mathrm{pri}}$ to incorporate secondary scale-degree symbols, thereby obtaining the full scale-degree alphabet corresponding to Rohrmeier's set~$\mathbb S$. Henceforth we identify an equivalence class of $\hat{\mathbf S}/\sim$ with its chosen label in $\mathbb S_{\mathrm{pri}}$, writing $R_i^\Delta$ for the class of $(i,\Delta,R)$.

Secondary scale-degree symbols arise from pivot chords, in the following way. Suppose
\[
(Y,R_i^\Delta),\,
(Z,S_j^\Gamma)
\in
[X]\times\mathbb S_{\mathrm{pri}}
\]
satisfy
\[
\operatorname{ev}(Y,R_i^\Delta)
=
\operatorname{ev}(Z,S_j^\Gamma).
\]
Then both local vocabularies $V_{Y,i}(\mathbf H)$ and $V_{Z,j}(\mathbf H)$ contain this same chord, so that it serves simultaneously as the degree $R_i^\Delta$ in $(Y,i)$ and the degree $S_j^\Gamma$ in $(Z,j)$. We are interested in the special case where the shared chord is the tonic degree of $(Z,j)$. Thus suppose
\[
\operatorname{ev}(Y,R_i^\Delta)
=
\operatorname{ev}(Z,\mathrm I_j).
\]
Then $R_i^\Delta$ also occurs as the tonic of the mode $(Z,j)$. Consequently, every primary scale-degree symbol of mode $j$ may be interpreted as a secondary scale-degree symbol of $R_i^\Delta$.

To formalize this, define
\begin{equation}
\label{eq:tonic}
\operatorname{Tonic} : \mathbb S_{\mathrm{pri}} \longrightarrow \mathcal P(I)
\end{equation}
by
\[
R_i^\Delta \longmapsto \left\{ j\in I \;\middle|\; \exists\,Y,Z\in[X] \text{ such that } \operatorname{ev}(Y,R_i^\Delta) = \operatorname{ev}(Z,\mathrm I_j) \right\}.
\]
Thus $\operatorname{Tonic}(R_i^\Delta)$ is the set of mode types for which $R_i^\Delta$ may serve as the tonic.  Now for each $j\in I$, write
\begin{equation}
\label{eq:pri,i}
\mathbb S_{\mathrm{pri},j}
\coloneqq
\{
S_j^\Gamma
\in
\mathbb S_{\mathrm{pri}}
\},
\end{equation}
the subset of primary scale-degree symbols belonging to mode $j$.

\begin{definition}[Secondary Scale-Degree Symbols]
\label{def:secondary-scale-degree-symbols}
For each primary scale-degree symbol $R_i^\Delta\in\mathbb S_{\mathrm{pri}}$, define
\[
\operatorname{Sec}(R_i^\Delta)
\coloneqq
\coprod_{j\in\operatorname{Tonic}(R_i^\Delta)}
\mathbb S_{\mathrm{pri},j}.
\]
The elements of $\operatorname{Sec}(R_i^\Delta)$ are called the \emph{secondary scale-degree symbols} of $R_i^\Delta$.
\end{definition}

The collection of all secondary scale-degree symbols is therefore the dependent sum
\begin{equation}
\label{eq:secondary-symbols}
\sum_{R_i^\Delta\in\mathbb S_{\mathrm{pri}}}
\operatorname{Sec}(R_i^\Delta),
\end{equation}
pairing each $R_i^\Delta$ with all of its secondary scale-degree symbols. Choosing a bijection
\[
\left(
\sum_{R_i^\Delta\in\mathbb S_{\mathrm{pri}}}
\operatorname{Sec}(R_i^\Delta)
\right)
\xlongrightarrow{\cong}
\mathbb S_{\mathrm{sec}}
\]
produces an alphabet of secondary scale-degree labels. For example, one may label the pair $(R_i^\Delta, S_j^\Gamma)$ by the symbol $S_j^\Gamma/R_i^\Delta$.

Finally, the complete scale-degree alphabet is the disjoint union
\[
\mathbb S
\coloneqq
\mathbb S_{\mathrm{pri}}
\sqcup
\mathbb S_{\mathrm{sec}},
\]
which generalizes Rohrmeier's scale-degree alphabet.

\begin{example}
Consider the class of diatonic scales $[X]$, with all of its modes $I = \Z_7$, and let $\mathbf c = (2,2,3)$. Write $I_{\mathrm{maj}} \coloneqq \{1,4,5\} \subset I$ for the mode types whose (undeformed) tonic triad is major. Now suppose we are in Ionian ($\mu_1$), and we have the deformation $\flat 2 \flat 6$ that flats the second and sixth scale degrees. Then the Roman numeral $\mathrm{II}_1^{\flat 2 \flat 6}$ specifies the Neapolitan chord in Ionian---concretely, in C major this is the $\mathrm{D}\flat$-F-$\mathrm{A}\flat$ triad. Since this chord is major, and $I_{\mathrm{maj}}$ is exactly the set of mode types whose tonic triad is major, it follows that
\[
\operatorname{Tonic}(\mathrm{II}_1^{\flat 2 \flat 6}) = I_{\mathrm{maj}}.
\]
Thus $\operatorname{Sec}(\mathrm{II}_1^{\flat 2 \flat 6})$ consists of the full primary alphabets $\mathbb S_{\mathrm{pri},j}$ for $j \in I_{\mathrm{maj}} = \{1,4,5\}$: every primary scale-degree symbol of Ionian, Lydian, and Mixolydian is thereby reinterpreted as a secondary scale-degree symbol of the Neapolitan chord.
\end{example}

\begin{remark}
In certain harmonic systems one may be interested only in certain classes of secondary chords. For example, in common-practice tonality the most familiar secondary chords are the secondary dominants, although more general secondary chords arise naturally in prolonged tonicizations and local tonal regions. The harmonic-function assignment allows such restrictions to be made uniformly. The map
\[
\Phi_i:\mathbb S_{\mathrm{pri},i}\to\mathcal P(F_{\spec})
\]
induces
\[
\Phi_{i,\core}:
\mathbb S_{\mathrm{pri},i}
\longrightarrow
\mathcal P(F_{\core}).
\]
For each core function $\alpha\in F_{\core}$, define
\[
\mathbb S_{\mathrm{pri},i}^{\alpha}
\coloneqq
\{
R_i^\Delta
\in
\mathbb S_{\mathrm{pri},i}
\mid
\alpha\in\Phi_{i,\core}(R_i^\Delta)
\}.
\]
Accordingly, one may restrict Definition~\ref{def:secondary-scale-degree-symbols} by defining
\[
\operatorname{Sec}_{\alpha}(R_i^\Delta)
\coloneqq
\coprod_{j\in\operatorname{Tonic}(R_i^\Delta)}
\mathbb S_{\mathrm{pri},j}^{\alpha},
\]
whose elements are the secondary $\alpha$-realizing scale-degree symbols of $R_i^\Delta$.
\end{remark}

\paragraph{The Surface Level}

Finally, Rohrmeier's surface level consists of chord symbols such as
\[
\mathbb O
=
\{\mathrm{Cmaj},\mathrm{Cmin},
\mathrm C^\circ,\ldots\}.
\]

In our setting, we instead assign a unique surface symbol to every harmony in the global harmonic vocabulary. Equivalently, we choose a bijection
\begin{equation}
\label{eq:lambda}
\lambda : V(\mathbf{H}) \longrightarrow\mathbb O,
\end{equation}
where $\mathbb O$ denotes the set of surface chord symbols.

\subsubsection{Rewrite Rules}
\label{subsub:rewrite-rules}

Having introduced these symbol sets, we now define a grammar over them. A formal grammar consists of a set of non-terminal symbols, a set of terminal symbols, a distinguished start symbol, and a collection of rewrite rules. The preceding subsections have supplied the raw material for the first three ingredients.

To ensure that rewrite rules are interpreted within a definite harmonic context, every non-terminal symbol, except for the distinguished start symbol, $piece$, is indexed by an ambient modal orbit cover, that is, by an element of $\mathbb{K}$. This indexing becomes indispensable once modulation is introduced in \S~\ref{para:scale-degree-level-rewrite-rules}.

Hence we define the set of non-terminal symbols by
\[
N
\coloneqq
\{piece\}
\;\cup\;
\bigl((\mathbb{P}\setminus\{piece\})
      \cup
      \mathbb{R}
      \cup
      \mathbb{F}
      \cup
      \mathbb{S}\bigr)
\times
\mathbb{K},
\]
where we write an element $(X,K)$ of the product as $X_K$, with
\[
X\in
(\mathbb{P}\setminus\{piece\})
\cup
\mathbb{R}
\cup
\mathbb{F}
\cup
\mathbb{S},
\qquad
K\in\mathbb{K}.
\]
The distinguished start symbol is
\[
piece\in\mathbb{P}\subseteq N.
\]
The terminal symbols are the surface chords $\mathbb{O}$.

To complete the specification of the grammar, it remains to define the set of rewrite rules. A \emph{rewrite rule} is an expression of the form
\[
A \longrightarrow X,
\]
where $A\in N$ is a non-terminal symbol and $X$ is a finite sequence of symbols from
\[
(N\setminus\{piece\})\cup\mathbb{O}.
\]

\paragraph{Phrase-Level Rewrite Rules}
\label{para:phrase-level-rewrite-rules}

A derivation begins with the distinguished start symbol $piece$. The first rewrite rule is
\begin{equation}
piece
\longrightarrow
\mathrm{P}_{K_1}\cdots \mathrm{P}_{K_n},
\qquad
n\ge 1,
\end{equation}
indicating that a piece consists of a finite sequence of phrases, each indexed by a (not necessarily distinct) modal orbit cover. Since each phrase symbol $\mathrm{P}_{K_i}$ is indexed by a modal orbit cover $K_i$, we assume, until the introduction of modulation (\S~\ref{para:scale-degree-level-rewrite-rules}), that every subsequent rewrite rule is interpreted relative to the same modal orbit cover $K_i$. Accordingly, we suppress the subscript $K_i$ throughout the remainder of this section.

At the phrase level, Rohrmeier's formalization of common-practice syntax \cite{rohrmeier2011towards} uses the rewrite rule
\[
\mathrm{P} \longrightarrow \mathrm{T}_\mathrm{R},
\]
indicating that a phrase is rewritten as a tonic region. This is appropriate for common-practice tonality, where every phrase is grounded in a distinguished tonic region. For full generality, however, a modal orbit cover need not admit a privileged functional center analogous to the tonic region. We therefore do not require that every phrase expands to a tonic region. Instead, we specify a subset $\mathcal{R}\subseteq\mathbb{R}$ of admissible functional regions and include the rewrite rules
\begin{equation}
\label{eq:phrase-rewrite}
\mathrm{P} \longrightarrow \alpha_{\mathrm{R}},
\qquad
\alpha_{\mathrm{R}}\in\mathcal{R}.
\end{equation}
We also allow phrases to subdivide recursively by including the family of rewrite rules
\[
\mathrm{P}
\longrightarrow
\underbrace{\mathrm{P}\cdots\mathrm{P}}_{n},
\qquad
n\ge1.
\]
This rule becomes important once modulation is introduced. Since modulation is triggered at the scale-degree level when a scale-degree symbol is reinterpreted as a pivot chord in a new modal orbit cover, the resulting modulation may naturally initiate one or more new phrases.

\paragraph{Functional Level Rewrite Rules}
\label{para:functional-level-rewrite-rules}

At the functional level we have functional expansion rules—how to expand a functional region—and substitution rules—how to substitute specialized functions for core harmonic functions. 

\subparagraph{Functional Expansion Rules}

We start by recalling Rohrmeier's functional expansion rules, which are the following:
\begin{align*}
  \mathrm{T}_\mathrm{R} &\longrightarrow  \mathrm{D}_\mathrm{R}\;\mathrm{T},\\
  \mathrm{D}_\mathrm{R} &\longrightarrow  \mathrm{S}_\mathrm{R}\;\mathrm{D},\\
  \mathrm{T}_\mathrm{R} &\longrightarrow  \mathrm{T}_\mathrm{R}\;\mathrm{D}_\mathrm{R},\\
  \mathrm{X}_\mathrm{R} &\longrightarrow  \mathrm{X}_\mathrm{R}\;\mathrm{X}_\mathrm{R}
    \quad\text{for any }\mathrm{X}_\mathrm{R}\in\mathbb{R},\\
  \mathrm{T}_\mathrm{R} &\longrightarrow  \mathrm{T},\\
  \mathrm{D}_\mathrm{R} &\longrightarrow  \mathrm{D},\\
  \mathrm{S}_\mathrm{R} &\longrightarrow  \mathrm{S}.
\end{align*}
We now give a slight generalization of these rules. The guiding principle is that every functional expansion should preserve the identity of the parent functional region. Thus, if a functional region $\alpha_{\mathrm R}$ is rewritten as a sequence of symbols, then either the region $\alpha_{\mathrm R}$ itself or its underlying harmonic function $\alpha$ should appear on the right-hand side.\footnote{This assumes that $F_{\core}\subseteq F_{\spec}$, as discussed in \S~\ref{subsub:harmonic-functions-independent-of-deformation-families}.}

First, every functional region may be rewritten as its underlying harmonic function,
\[
\alpha_{\mathrm R}\longrightarrow\alpha,
\qquad
\alpha_{\mathrm R}\in\mathbb R.
\]

The remaining rewrite rules have right-hand side consisting of two symbols. If both symbols are functional regions, then one of them must be the parent region $\alpha_{\mathrm R}$, so that the parent region is preserved under expansion. Likewise, if one symbol is a functional region and the other is a core harmonic function, then either the functional region must be $\alpha_{\mathrm R}$ or the core harmonic function must be $\alpha$. Finally, as in Rohrmeier's fourth rule above, every functional region may be duplicated. Accordingly, define 
\begin{equation}
\label{eq:expopt}
\mathrm{ExpOpt}(\alpha_\mathrm{R}) \coloneqq
(\{\alpha_{\mathrm R}\}\times\mathbb R)
\ \sqcup \
(\mathbb R\times\{\alpha_{\mathrm R}\})
\ \sqcup \
(\{\alpha_{\mathrm R}\}\times F_{\core})
\ \sqcup \
( F_{\core}\times\{\alpha\})
\ \sqcup \
(\{\alpha_{\mathrm R}\}\times\{\alpha_{\mathrm R}\}).
\end{equation}
Thus $\mathrm{ExpOpt}(\alpha_{\mathrm R})$ consists precisely of the admissible binary right-hand sides for rewrite rules with left-hand side $\alpha_{\mathrm R}$.

Now define
\begin{equation}
\label{eq:exprules}
\mathrm{ExpRules}(\alpha_{\mathrm R})
\coloneqq
\mathcal P(\mathrm{ExpOpt}(\alpha_{\mathrm R}))
\times
\{\alpha\}.
\end{equation}
Thus an element of $\mathrm{ExpRules}(\alpha_{\mathrm R})$ consists of a choice of admissible binary expansion rules for $\alpha_{\mathrm R}$ together with the unary rule
\[
\alpha_{\mathrm R}\longrightarrow\alpha.
\]

Finally, define
\begin{equation}
\label{eq:funcexp}
\mathrm{FuncExp}
\coloneqq
\prod_{\alpha_{\mathrm R}\in\mathbb R}
\mathrm{ExpRules}(\alpha_{\mathrm R}).
\end{equation}
An element of $\mathrm{FuncExp}$ therefore specifies a complete system of functional expansion rules by choosing, for every functional region, which admissible binary expansions are permitted.

\subparagraph{Substitution Rules}

The substitution rules specify how a core harmonic function may be replaced by one of its specialized realizations. Recall that the harmonic function schema $\mathcal{F} = (F_{\core}, F_{\spec}, \varphi)$ associates to each core function $\alpha\in F_{\core}$ the fiber
\[
\varphi^{-1}(\alpha)\subseteq F_{\spec}=\mathbb{F}
\]
of its specialized functions. Accordingly, for every $\alpha\in F_{\core}$ and every $\alpha' \in\varphi^{-1}(\alpha)$, we include the rewrite rule
\begin{equation}
\alpha \longrightarrow \alpha'.
\end{equation}
(For instance, with the schema of Example~\ref{ex:harmonic-function-schema}, this yields rewrite rules $\mathrm{T} \longrightarrow \mathrm{Tp}$, $\mathrm{T} \longrightarrow \mathrm{Tcp}$, $\mathrm{S} \longrightarrow \mathrm{Sp}$, and $\mathrm{D} \longrightarrow \mathrm{Dp}$.)

\paragraph{Scale-Degree-Level Rewrite Rules}
\label{para:scale-degree-level-rewrite-rules}

We now come to three kinds of rewrite rules: those for converting functional terms to scale-degree symbols, those for generating secondary chords of a scale degree, and finally those governing modulation via pivot chords. This is the most intricate part of the grammar, since it is here that the harmonic structure encoded by modal orbit covers is involved most directly.

\subparagraph{Function-to-Scale-Degree Rules}

We first formulate the rewrite rules that rewrite functional terms as scale-degree symbols. Recall that every non-terminal symbol, aside from the distinguished start symbol $piece$, is indexed by a modal orbit cover $K=(Y,i)\in\mathbb K$. These rewrite rules depend on the mode $i$.

Recall the set $\mathbb{S}_{\mathrm{pri},i}$ from \eqref{eq:pri,i}, consisting of the primary scale-degree symbols belonging to mode $i$. We continue to denote by
\[
\Phi_i:\mathbb{S}_{\mathrm{pri},i}\longrightarrow\mathcal{P}(\mathbb{F})
\]
the map assigning to each primary scale-degree symbol the set of harmonic functions it may realize.

Given a harmonic function $\alpha\in\mathbb F$, our goal is to identify every primary scale-degree symbol whose associated set of harmonic functions contains $\alpha$. To this end, define, as in \eqref{eq:up-alpha},
\[
\uparrow\alpha
\coloneqq
\{A\subseteq\mathbb F\mid\alpha\in A\}.
\]
Then
\[
\Phi_i^{-1}(\uparrow\alpha)
=
\{R\in\mathbb{S}_{\mathrm{pri},i}\mid\alpha\in\Phi_i(R)\}.
\]
Accordingly, for every $\alpha\in\mathbb F$ and every $R\in\Phi_i^{-1}(\uparrow\alpha)$, we include the rewrite rule
\begin{equation}
\alpha\longrightarrow R.
\end{equation}

\subparagraph{Secondary-Function Rules}

We now give rewrite rules for secondary functions. Recall from Definition~\ref{def:secondary-scale-degree-symbols} that every primary scale-degree symbol $R\in\mathbb{S}_{\mathrm{pri}}$ has an associated set $\operatorname{Sec}(R)$ of secondary scale-degree symbols. Hence, for every $S\in\operatorname{Sec}(R)$, we include the rewrite rule
\begin{equation}
R\longrightarrow S \; R.
\end{equation}
Thus a primary scale degree may be expanded by prefixing one of its admissible secondary functions.

\begin{remark}[Sequences]
Rohrmeier's grammar also includes a rewrite rule generating descending fifth sequences. Namely, for each scale-degree symbol $R$, one has the rewrite rule
\[
R\longrightarrow \operatorname{Desc}_{5}(R) \; R,
\]
where $\operatorname{Desc}_{5}(R)$ denotes the scale-degree symbol a diatonic fifth below $R$. Repeated application of this rule produces a descending-fifths sequence.

More generally, one could introduce rewrite rules
\[
R\longrightarrow\operatorname{Desc}_{x}(R),
\]
where $x$ is an arbitrary scalar interval. Recursive application then generates scalar sequences of the corresponding scalar interval.
\end{remark}

\subparagraph{Modulation Rules}

In our framework, modulation is realized through pivot chords. That is, modulation occurs by reinterpreting a scale-degree symbol $R\in\mathbb S_{\mathrm{pri}}$ as the tonic of a new mode. This is possible precisely when $\operatorname{Tonic}(R)\neq\varnothing$, where $\operatorname{Tonic}$ was defined in \eqref{eq:tonic}.

We must now make use of the modal orbit cover indexing inherited by scale-degree symbols from the phrase level. Thus we write $R_{(Y,i)}$ for the scale-degree symbol $R$ belonging to the modal orbit cover $(Y,i)$.

Let $j\in\operatorname{Tonic}(R)$. By the definition of $\operatorname{Tonic}$ \eqref{eq:tonic}, there is a unique translate $Z\in[X]$ such that
\[
\operatorname{ev}(R_{(Y,i)})
=
\operatorname{ev}(\mathrm I_{(Z,j)}),
\]
where $\operatorname{ev}$ denotes the evaluation map of \eqref{eq:ev}. Hence we include the rewrite rule
\begin{equation}
R_{(Y,i)}
\longrightarrow
\mathrm P_{(Z,j)}.
\end{equation}

Thus a scale-degree symbol in one modal orbit cover is reinterpreted as the tonic of another, thereby initiating a new phrase in the modal orbit cover $(Z,j)$. Modulation therefore occurs as a rewrite from a scale-degree symbol to a phrase symbol, with the pivot chord serving as the tonic of the new phrase.

\paragraph{Surface-Level Rewrite Rules}

Finally, the remaining scale-degree symbols are rewritten as surface chords by means of the evaluation map. Thus, for every indexed scale-degree symbol $R_{(Y,i)}$, we include the rewrite rule
\[
R_{(Y,i)}
\longrightarrow
\operatorname{ev}(R_{(Y,i)}),
\]
where $\operatorname{ev}(R_{(Y,i)})\in\mathbb O$ is the corresponding surface chord.

Since the symbols of $\mathbb O$ are terminal symbols, no further harmonic rewrites are possible. 

\begin{figure}[h!]
    \centering
    \includegraphics[width=0.8\linewidth]{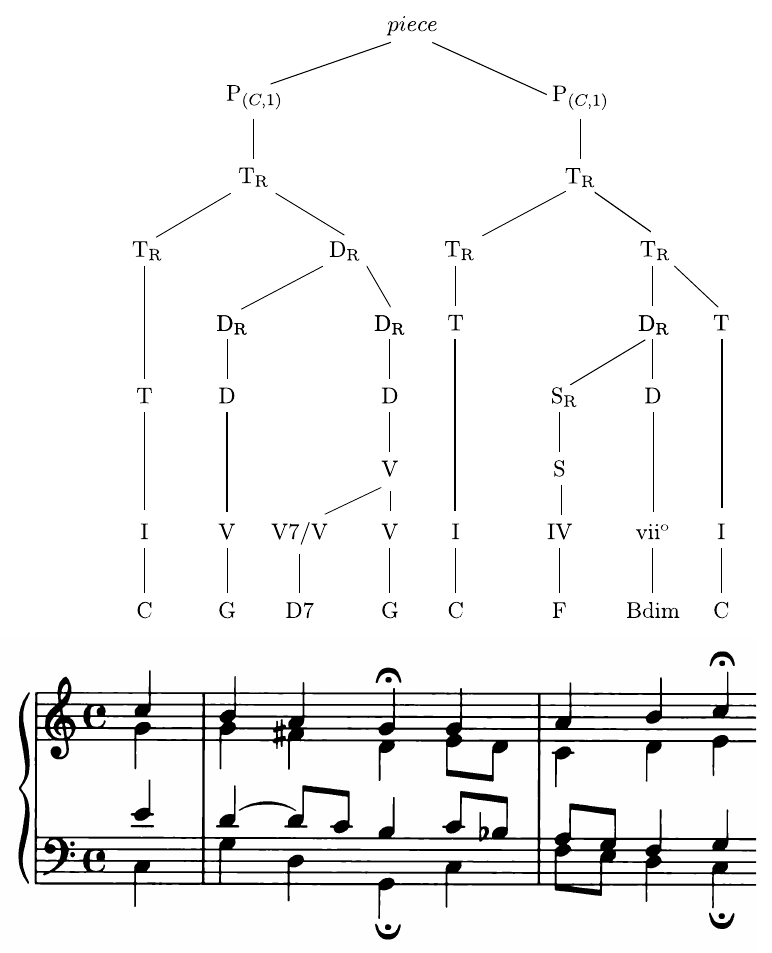}
    \caption{A derivation tree, in Rohrmeier's syntax, for the opening two phrases of Bach's four-part chorale \emph{Ach Gott und Herr}, BWV 255.}
    \label{fig:Bach-derivation}
\end{figure}

\begin{example}
Figure~\ref{fig:Bach-derivation} shows a derivation tree, in Rohrmeier's syntax, for the opening two phrases of Bach's four-part chorale \emph{Ach Gott und Herr}, BWV 255 (the tree covers only this excerpt, not the full chorale). Starting from the start symbol $piece$, the derivation branches into two phrases, each a diatonic instance $\mathrm{P}_{(C,1)}$ of the grammar developed above. 
\end{example}

\subsubsection{Formal Definition of Harmonic Practice}
\label{subsub:formal-definition-of-harmonic-practice}

Given the data of a functional harmonic system $\mathbf{H} = ([X], \mathbf c, I, \mathcal{D}, (\mathcal{F},\Phi))$, recall that the symbol sets $\mathbb K,\mathbb R,\mathbb F,\mathbb S,\mathbb P,\mathbb O$ were each canonically determined by $\mathbf H$ in \S\ref{subsub:the-four-grammatical-levels} (up to a labeling choice for $\mathbb S_{\mathrm{pri}}$, $\mathbb S_{\mathrm{sec}}$, and $\mathbb O$). Consequently, the preceding construction is almost entirely canonical. Indeed, the only genuine choices made in specifying the grammar are:
\begin{enumerate}
\item the choice of a subset $\mathcal R\subseteq\mathbb R$ of admissible phrase-initial functional regions for the rewrite rule \eqref{eq:phrase-rewrite} (see \S~\ref{para:phrase-level-rewrite-rules}); and

\item the choice of a system of functional expansion rules, that is, an element $\mathcal E\in\mathrm{FuncExp}$, where $\mathrm{FuncExp}$ is defined in \eqref{eq:funcexp} (see \S~\ref{para:functional-level-rewrite-rules}).
\end{enumerate}
All remaining rewrite rules are determined canonically by these choices together with the underlying functional harmonic system $\mathbf H$. 

This motivates the following definition.

\begin{definition}[Syntactic Specification]
\label{def:syntactic-specification}
Let $\mathbf H=([X],\mathbf c,I,\mathcal D,(\mathcal F,\Phi))$ be a functional harmonic system. A \emph{syntactic specification} for
$\mathbf H$ is a pair
\[
\Sigma=( R, E)
\in
\mathcal P(\mathbb R)\times\mathrm{FuncExp}.
\]
\end{definition}

The rewrite rules described above are therefore induced canonically from the pair $(\mathbf H,\Sigma)$.

\begin{definition}[Harmonic Practice]
A \emph{harmonic practice} is a pair $(\mathbf H,\Sigma)$, consisting of a functional harmonic system together with a syntactic specification for it.
\end{definition}

The preceding constructions therefore determine, for every harmonic practice $(\mathbf H,\Sigma)$, a unique context-free grammar $\mathcal G(\mathbf H,\Sigma)$. We denote by $L(\mathbf H,\Sigma)$ the language generated by this grammar, that is, the set of well-formed terminal strings derivable from the distinguished start symbol. Since the terminal symbols are the surface chords $\mathbb O$, we have $L(\mathbf H,\Sigma)\subseteq\mathbb O^*$, where $\mathbb O^*$ denotes the free monoid on $\mathbb O$.

\begin{example}
\label{ex:piano-vars}
Recall from Example~\ref{ex:tonicize} the scale
\[
X = \{ \mathrm G^\flat, \mathrm A, \mathrm B^\flat, \mathrm C, \mathrm D^\flat,
\mathrm E^{\flat \flat}, \mathrm F \}
\]
and the interval composition $\mathbf q=(3,3,1)$, forming the modal orbit cover $(X^{(\mathbf q)},\mu_4)$ whose tonic is $\mathrm G^\flat$. We translate the diatonic tertian cover into this one via an isomorphism, inherit its functional assignment, and use the result to build a derivation tree that closely parallels a common-practice derivation in the sense of Rohrmeier's syntax, but with the harmonic flavor of $X^{(\mathbf q)}$.

Let $\mathbf{t} = (2,2,3)$ be the tertian interval composition, so $\hat{\mathbf t}(0)=\{0,2,4\}$ and $\hat{\mathbf q}(0)=\{0,3,6\}$. Take $u=5$: since
\[
5\cdot\{0,2,4\} = \{0,10,20\} \pmod 7 = \{0,3,6\} = \hat{\mathbf q}(0)
\]
exactly, the map $f(x)=5x$ (i.e.\ $b=0$) lies in $\mathrm{Hom}_{\mathbf C(7)}(\mathbf t,\mathbf q)$, and in fact is an isomorphism, since $f(\hat{\mathbf t}(0))=\hat{\mathbf q}(0)$ exactly---consistent with $\mathbf t$ and $\mathbf q$ lying in the same isomorphism class of Figure~\ref{fig:isoclasses}. This witness will supply the isomorphism carrying the tertian cover to $X^{(\mathbf q)}$.

To furnish the functional assignment to be transported, take $C= \{ \mathrm{C}, \mathrm{D}, \ldots, \mathrm{B}\}$, the C major scale, and define $\Phi$, with specialized functions $F_{\spec}=\{\mathrm{T}, \mathrm{Tp}, \mathrm{S}, \mathrm{Sp}, \mathrm{D}, \mathrm{Dp} \}$, as follows:
\[
\begin{tikzcd}[column sep=large, row sep=small]
C^{(\mathbf t)} \arrow[r, "\Phi"]
  & \mathcal P(F_{\spec}) \\[6pt]
\mathrm I=\{\mathrm C,\mathrm E,\mathrm G\} \arrow[r, |->]
  & \{\mathrm T\} \\
\mathrm{II}=\{\mathrm D,\mathrm F,\mathrm A\} \arrow[r, |->]
  & \{\mathrm{Sp}\} \\
\mathrm{III}=\{\mathrm E,\mathrm G,\mathrm B\} \arrow[r, |->]
  & \{\mathrm{Dp}\} \\
\mathrm{IV}=\{\mathrm F,\mathrm A,\mathrm C\} \arrow[r, |->]
  & \{\mathrm S\} \\
\mathrm V=\{\mathrm G,\mathrm B,\mathrm D\} \arrow[r, |->]
  & \{\mathrm D\} \\
\mathrm{VI}=\{\mathrm A,\mathrm C,\mathrm E\} \arrow[r, |->]
  & \{\mathrm{Tp}\} \\
\mathrm{VII}=\{\mathrm B,\mathrm D,\mathrm F\} \arrow[r, |->]
  & \{\mathrm D\}
\end{tikzcd}
\]
The witness $f : \mathbf t \to \mathbf q$ determines the isomorphism
\[
\hat{f}_{1,4} : C^{(\mathbf t)} \longrightarrow X^{(\mathbf q)},
\]
along which we push $\Phi$ forward via \eqref{eq:push-phi-forward}, so that each chord of $C^{(\mathbf t)}$ carries its function across to the corresponding chord of $X^{(\mathbf q)}$:
\[
\begin{tikzcd}[column sep=large, row sep=small]
C^{(\mathbf t)} \arrow[r, "\hat f_{1,4}"]
  & X^{(\mathbf q)} \arrow[r, "\Phi_{\hat f_{1,4}}"]
  & \mathcal P(F_{\spec}) \\[6pt]
\mathrm I=\{\mathrm C,\mathrm E,\mathrm G\} \arrow[r, |->]
  & \{\mathrm G^\flat,\mathrm C,\mathrm F\}=\mathrm I \arrow[r, |->]
  & \{\mathrm T\} \\
\mathrm{II}=\{\mathrm D,\mathrm F,\mathrm A\} \arrow[r, |->]
  & \{\mathrm E^{\flat\flat},\mathrm A,\mathrm D^\flat\}=\mathrm{VI} \arrow[r, |->]
  & \{\mathrm{Sp}\} \\
\mathrm{III}=\{\mathrm E,\mathrm G,\mathrm B\} \arrow[r, |->]
  & \{\mathrm C,\mathrm F,\mathrm B^\flat\}=\mathrm{IV} \arrow[r, |->]
  & \{\mathrm{Dp}\} \\
\mathrm{IV}=\{\mathrm F,\mathrm A,\mathrm C\} \arrow[r, |->]
  & \{\mathrm A,\mathrm D^\flat,\mathrm G^\flat\}=\mathrm{II} \arrow[r, |->]
  & \{\mathrm S\} \\
\mathrm V=\{\mathrm G,\mathrm B,\mathrm D\} \arrow[r, |->]
  & \{\mathrm F,\mathrm B^\flat,\mathrm E^{\flat\flat}\}=\mathrm{VII} \arrow[r, |->]
  & \{\mathrm D\} \\
\mathrm{VI}=\{\mathrm A,\mathrm C,\mathrm E\} \arrow[r, |->]
  & \{\mathrm D^\flat,\mathrm G^\flat,\mathrm C\}=\mathrm V \arrow[r, |->]
  & \{\mathrm{Tp}\} \\
\mathrm{VII}=\{\mathrm B,\mathrm D,\mathrm F\} \arrow[r, |->]
  & \{\mathrm B^\flat,\mathrm E^{\flat\flat},\mathrm A\}=\mathrm{III} \arrow[r, |->]
  & \{\mathrm D\}
\end{tikzcd}
\]

To obtain a harmony capable of being tonicized, consider the deformation $\musDoubleSharp 6$, which raises the sixth scale degree of $X$ in mode $4$ by two semitones and so takes $\mathrm{E} \flat \flat$ to $\mathrm{E} \natural$. Applying it to $\mathrm{III}$ gives
\[
\mathrm{III}^{\musDoubleSharp 6} = \text{B}\flat\text{-E-A} = \bfT_4(\mathrm{I}),
\]
a transposition of the tonic by four semitones. Since $\mathrm{III}^{\musDoubleSharp 6}$ therefore functions locally as a tonic, it can itself support secondary chords: for instance, $\mathrm{III10}/\mathrm{III}^{\musDoubleSharp 6}$ functions as its secondary dominant (with an added tenth).

With this harmonic vocabulary in hand, we construct a derivation tree using the same syntactic specification as Rohrmeier's, shown in Figure~\ref{fig:piano-vars-derivation} together with a musical realization for piano (a slight adjustment, made for pedagogical purposes, of the first phrase of the theme of my \emph{Variations for Piano}, Op.~31). The example illustrates how much of the structure of common-practice tonality can be inherited even while its harmonic content is substantially altered.
\end{example}

\begin{sidewaysfigure}
    \includegraphics[width=1\linewidth]{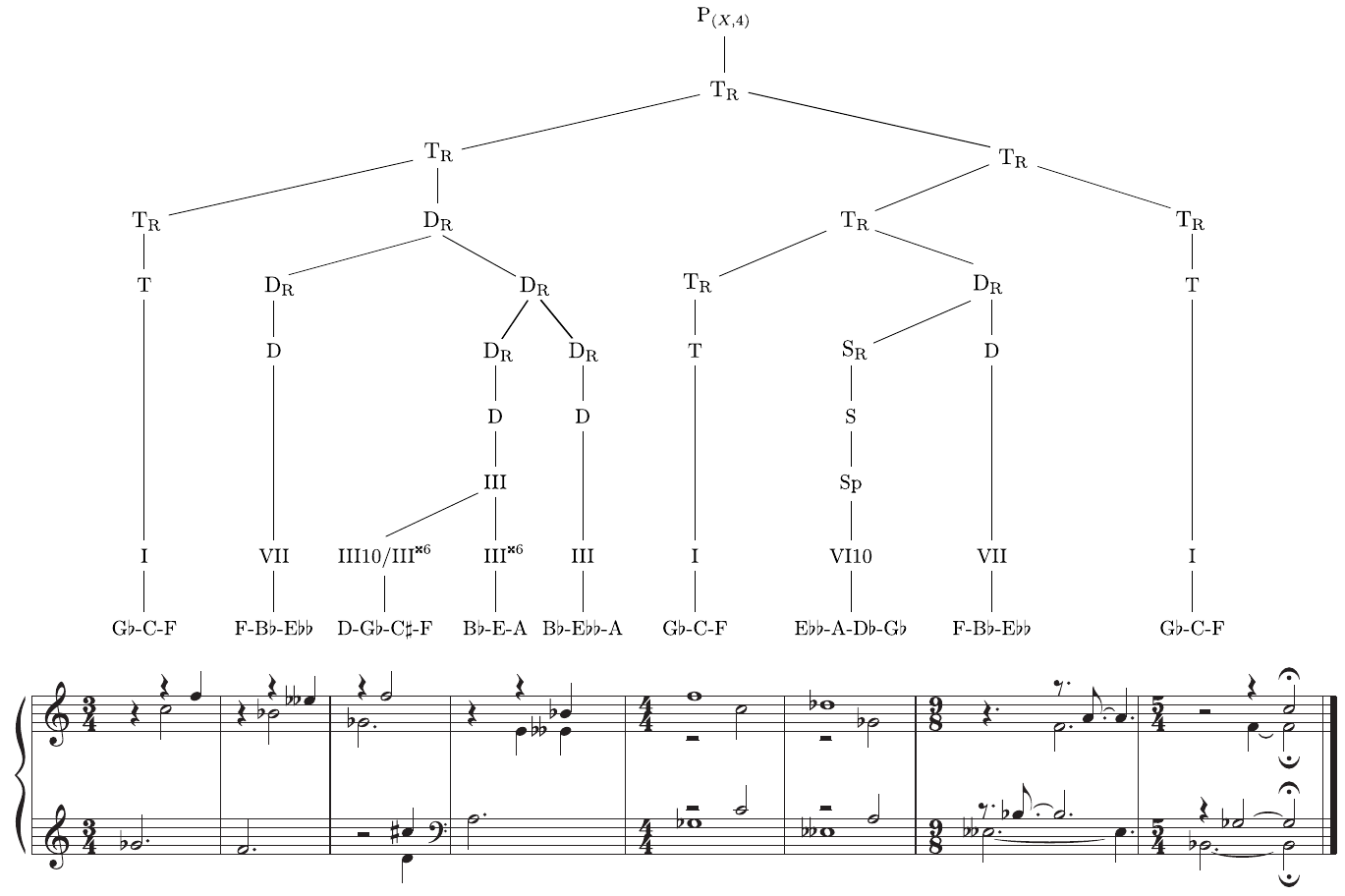}
    \caption{A derivation of $\mathrm P_{(X,4)}$ in the modal orbit cover system of Example~\ref{ex:piano-vars}, together with a musical realization for piano---a slight adjustment, made for pedagogical purposes, of the opening phrase of the theme of my \emph{Variations for Piano}, Op.~31. The tree parallels a common-practice derivation in the sense of Rohrmeier's syntax, but is built from the functional assignment $\Phi$ pushed forward onto $X^{(\mathbf q)}$ via $\hat f_{1,4}$ (see the accompanying text). Roman numerals label scale degrees of $(X^{(\mathbf q)},\mu_4)$; the chord $\mathrm{III}^{\musDoubleSharp 6}$ is the double-sharp-6 deformation of degree III, equal to $\bfT_4(\mathrm I)$, over which $\mathrm{III10}$ functions as a secondary dominant.}
    \label{fig:piano-vars-derivation}
\end{sidewaysfigure}

\subsection{Inheritance of Harmonic Practices}
\label{sub:inheritance-of-harmonic-practices}

In \S~\ref{sub:modal-orbit-cover-systems}–\ref{sub:harmonic-practices} we introduced several successive layers of harmonic structure: modal orbit cover systems, harmonic vocabularies, functional harmonic systems, and harmonic practices. It is natural to ask whether each of these forms a category, whose morphisms describe how harmonic structure may be inherited from one system to another.

The amount of structure that should be preserved depends on the layer under consideration. Modal orbit cover systems admit a natural notion of morphism that preserves much of their structure. Harmonic vocabularies, by contrast, specify deformation choices that are often highly dependent on local harmonic context, and therefore it is generally unreasonable to require deformation data to be preserved under morphisms, for the reasons discussed in \S~\ref{sub:harmonic-vocabularies}. Functional harmonic systems and harmonic practices allow for substantially more structure to be inherited.

We begin with modal orbit cover systems.

\subsubsection{The Category of Modal Orbit Cover Systems}

Recall from Definition~\ref{def:modal-orbit-cover-system} that a modal orbit cover system is a triple $([X],\mathbf c,I)$, consisting of a scale orbit, an interval composition determining the orbit-cover type, and a chosen collection of modes. Such a system determines the full subgroupoid
\[
[X^{(\mathbf c)}]_I
\longhookrightarrow
\int\OrbCovTR.
\]

A morphism between modal orbit cover systems should preserve not only the underlying orbit-cover structure, but also the relationships among the chosen modal realizations. These two requirements are governed by the same automorphism $h : \Z_7 \to \Z_7$ on scale degrees, as we now show.

Note that automorphisms $h:\mathbb Z_7\to\mathbb Z_7$ play two closely related roles. First, they transform interval compositions, thereby changing the structure of the generating chord of an orbit cover. For example, multiplication by $5$ sends the interval composition corresponding to tertian triads to that corresponding to quartal triads. Second, since the modes are canonically indexed by the elements of $\mathbb Z_7$, the same $h$ determines how the modal realizations themselves should correspond.

\begin{example}
Consider the tertian interval composition $\mathbf{c}=(2,2,3)$ and the quartal interval composition $\mathbf{d}=(3,3,1)$. Over the C major scale $X$, these determine the tertian orbit cover $X^{(\mathbf c)}$ and the quartal orbit cover $X^{(\mathbf d)}$, respectively.

Now consider the automorphism
\[
\begin{matrix}
 h : &\mathbb Z_7&\longrightarrow&\mathbb Z_7 \\
& x & \longmapsto &5x \pmod 7.
\end{matrix}
\]
Since
\[
h(1)=5\equiv -2 \pmod 7,
\]
this map replaces each ascending scale step by a descent of two scale steps. Consequently, $h(\{0,2,4\}) = \{0,3,6\}$, so the generating tertian chord is transformed into the generating quartal chord.

Because the modes are canonically indexed by the elements of $\mathbb Z_7$, the same $h$ also determines how the modal realizations correspond. For example,
\[
 h(1) \equiv 5 \pmod 7,
\qquad
 h(3) \equiv 1 \pmod 7,
\]
so the relationship between the Ionian mode $\mu_1$ and the Phrygian mode $\mu_3$ is carried to the relationship between the Mixolydian mode $\mu_5$ and the Ionian mode $\mu_1$. For instance, the tonic chord of $(X^{(\mathbf c)},\mu_1)$ is $\{0,4,7\}$, while that of $(X^{(\mathbf c)},\mu_3)$ is $\{4,7,11\}$, sharing its first two tones with the last two tones of the former. Likewise, the tonic chord of $(X^{(\mathbf d)},\mu_5)$ is $\{7,0,5\}$, while that of $(X^{(\mathbf d)},\mu_1)$ is $\{0,5,11\}$, which exhibits the same incidence relationship. Thus the automorphism $h$ simultaneously transforms the orbit-cover structure and the corresponding relationships among its modal realizations.
\end{example}

Previously we only considered morphisms between individual modes. Given modal orbit covers $(X^{(\mathbf c)},\mu_i)$ and $(Y^{(\mathbf d)},\mu_j)$ together with an automorphism $h$ on $\Z_7$, there is a unique map $h_{i,j} : X \to Y$ satisfying $\mu_j\circ h_{i,j} = h\circ\mu_i$, as in~\eqref{eq:h_ij} (which extends to an orbit-cover morphism when the condition of Definition~\ref{def:morphisms-Scaorb} is satisfied).

To define morphisms of modal orbit cover systems, however, we must relate entire collections of modes. Thus the automorphism $h$ must restrict to an injective map
\[
\hat{\mathcal M} : I \rightarrowtail J,
\]
so that the correspondence between modal realizations varies coherently with the transformation of the orbit-cover structure. Accordingly, a morphism of modal orbit cover systems is given by a quadruple
\[
\mathcal M
=
(\Delta,\mathbf T_p, \bfR_q, f):
[X^{(\mathbf c)}]_I
\longrightarrow
[Y^{(\mathbf d)}]_J,
\]
where the first two components describe the transformation of the underlying scales, while $\bfR_q$ and $f$ determine the induced correspondence between the orbit-cover structure and the chosen modal realizations; here $h$ denotes the automorphism induced by the linear part of $f$, i.e.\ $h(z) \coloneqq \ell(f)\cdot z$, so that $h_{i,j}$ above is understood relative to this $f$. We now make this construction precise.

\begin{definition}[Morphisms of Modal Orbit Cover Systems]
\label{def:morphisms-of-modal-orbit-cover-systems}
Let $([X],\mathbf c,I)$ and $([Y],\mathbf d,J)$ be modal orbit cover systems. A morphism
\[
\mathcal M
=
(\Delta,\mathbf T_p, \bfR_q, f) :
[X^{(\mathbf c)}]_I
\longrightarrow
[Y^{(\mathbf d)}]_J
\]
consists of a deformation $\Delta\in\mathrm{Hom}_{\HDef}([X],[Y])$, a translation, a rotation, and a morphism witness $f\in\mathrm{Hom}_{\mathbf{C}(7)}(\mathbf c,\mathbf d)$ (Definition~\ref{def:intcomp-category}), satisfying the following condition. The operations $\Delta$, $\bfR_q$, and the linear part $h\coloneqq\ell(f)$ induce the map on modal indices
\[
\hat{\mathcal{M}} = \delta \circ \bfR_q \circ h : I \longrightarrow J,
\]
where $\delta$ is the bijection induced by $\Delta$ as in~\eqref{eq:delta}. Thus $\mathcal M$ is defined only when $\hat{\mathcal{M}}(I)\subseteq J$. An object $(W^{(\mathbf c)},\mu_i)$ is sent to
\[
\left(
(\Delta(\mathbf T_p(W)))^{(\mathbf d)},
\mu_{\hat{\mathcal M}(i)}
\right),
\]
with associated modal orbit cover morphism $h_{i,\hat{\mathcal M}(i)} : W \longrightarrow \Delta(\mathbf T_p(W))$, the map of~\eqref{eq:h_ij} satisfying $\mu_{\hat{\mathcal M}(i)}\circ h_{i,\hat{\mathcal M}(i)} = h\circ\mu_i$. This extends to the canonical induced chord map
\[
\hat f_{i,\hat{\mathcal M}(i)} : W^{(\mathbf c)} \longrightarrow (\Delta(\mathbf T_p(W)))^{(\mathbf d)}
\]
of Proposition~\ref{prop:induced-chord-bijection}, well-defined since $f\in\mathrm{Hom}_{\mathbf{C}(7)}(\mathbf c,\mathbf d)$.
\end{definition}

\begin{example}
\label{ex:modal-orbit-cover-system-map}
Let $[X]$ be the translation orbit of the diatonic scale, let $\mathbf c=(2,2,3)$, and let $I=\{1,6\}$, corresponding to the Ionian and Aeolian modes. Thus $([X],\mathbf c,I)$ is the modal orbit cover system consisting of the tertian orbit covers of the Ionian and Aeolian modes.

Let $\Delta$ be the deformation which, relative to the Ionian mode, flattens the third, sixth, and seventh scale degrees. Then $[\Delta(X)]=[X]$. Now let $\mathbf d=(3,2,1,1)$, $J=\{1,4,6\}$, and define
\[
\mathbf T_p=\mathbf T_9,
\qquad
\bfR_q = \bfR_1,
\qquad
 f(x)=5x + 0. 
\]
We verify that $\mathcal M=(\Delta,\mathbf T_9, \bfR_1, f)$ is a morphism of modal orbit cover systems.

Before checking these conditions, we first confirm that $f\in\mathrm{Hom}_{\mathbf C(7)}(\mathbf c,\mathbf d)$. We have $\hat{\mathbf c}(0)=\{0,2,4\}$, and
\[
f(\hat{\mathbf c}(0)) = 5\cdot\{0,2,4\} = \{0,10,20\} \pmod 7 = \{0,3,6\}.
\]
Next, we have $\hat{\mathbf d}(0)=\{0,3,5,6\}$, which contains the subset $\{0,3,6\}$; thus $f(\hat{\mathbf c}(0))\subseteq\hat{\mathbf d}(0)$, confirming $f\in\mathrm{Hom}_{\mathbf C(7)}(\mathbf c,\mathbf d)$.

First, $\delta(i)=i-2 \pmod7$, so
\[
\hat{\mathcal M}
=
\delta\circ \bfR_1 \circ h
=
5x-1 \pmod7.
\]
Consequently, $\hat{\mathcal M}(1)=4$, $\hat{\mathcal M}(6)=1$, and therefore $\hat{\mathcal M}(I) = \{4,1\} \subseteq J$, verifying the first condition of Definition~\ref{def:morphisms-of-modal-orbit-cover-systems}.

Now consider the object $(W^{(\mathbf c)},\mu_1)$, where $W$ is the C-major scale in the Ionian mode. Since $\Delta(\mathbf T_9(W))=W$, we have $\mathcal{M}(W^{(\mathbf c)},\mu_1) = (W^{(\mathbf d)},\mu_4)$, and the corresponding modal orbit cover morphism is $h_{1,4}$, which extends to the chordal degree map $\hat f_{1,4}$, as so:

\begin{align*}
W^{(\mathbf c)} & \xlongrightarrow{\hat f_{1,4}} (\Delta(\bfT_9(W)))^{(\mathbf d)} \\
\mathrm{I} = \{0,4,7\} & \longmapsto \{5, 11, 2, 4\} = \mathrm{I} \\
\mathrm{II} = \{2,5,9\} & \longmapsto   \{2,  7, 11, 0\} = \mathrm{VI} \\
\mathrm{III} = \{4,7,11\} & \longmapsto \{11, 4, 7, 9\} = \mathrm{IV} \\
\mathrm{IV} = \{5,9,0\} & \longmapsto   \{7, 0, 4, 5\} = \mathrm{II} \\
\mathrm{V} = \{7,11,2\} & \longmapsto  \{4, 9, 0, 2\} = \mathrm{VII} \\
\mathrm{VI} = \{9,0,4\} & \longmapsto    \{0, 5, 9, 11\} = \mathrm{V} \\
\mathrm{VII} = \{11,2,5\} & \longmapsto  \{9, 2, 5, 7\} = \mathrm{III} \\
\end{align*}

The induced functor also acts on morphisms. A groupoid morphism
\[
(\mathbf T_s,\mathbf R_{j-i}) :
(W^{(\mathbf c)},\mu_i)
\longrightarrow
(V^{(\mathbf c)},\mu_j)
\]
is sent to
\[
(\mathbf T_s,\mathbf R_{\hat{\mathcal M}(j)-\hat{\mathcal M}(i)}) :
\left(
(\Delta(\mathbf T_9(W)))^{(\mathbf d)},
\mu_{\hat{\mathcal M}(i)}
\right)
\longrightarrow
\left(
(\Delta(\mathbf T_9(V)))^{(\mathbf d)},
\mu_{\hat{\mathcal M}(j)}
\right),
\]
so that the diagram
\[
\begin{tikzcd}
	{(W^{(\mathbf c)},\mu_i)} &&& {(V^{(\mathbf c)},\mu_j)} \\
	\\
	{\left((\Delta(\mathbf T_9(W)))^{(\mathbf d)},\mu_{\hat{\mathcal M}(i)}\right)}
	&&&
	{\left((\Delta(\mathbf T_9(V)))^{(\mathbf d)},\mu_{\hat{\mathcal M}(j)}\right)}
	\arrow["{(\mathbf T_s,\mathbf R_{j-i})}", from=1-1, to=1-4]
	\arrow["{h_{i,\hat{\mathcal M}(i)}}"', from=1-1, to=3-1]
	\arrow["{h_{j,\hat{\mathcal M}(j)}}", from=1-4, to=3-4]
	\arrow["{(\mathbf T_s,\mathbf R_{\hat{\mathcal M}(j)-\hat{\mathcal M}(i)})}"', from=3-1, to=3-4]
\end{tikzcd}
\]
commutes.

For example, the rotation
\[
\mathbf R_5:\mu_1\longrightarrow\mu_6
\]
is sent to
\[
\mathbf R_{\hat{\mathcal M}(6)-\hat{\mathcal M}(1)}
=
\mathbf R_{1-4}
=
\mathbf R_4,
\]
which carries the image of the Ionian mode, namely $\mu_4$, to the image of
the Aeolian mode, namely $\mu_1$. Likewise,
\[
\mathbf R_2:\mu_6\longrightarrow\mu_1
\]
is sent to
\[
\mathbf R_{\hat{\mathcal M}(1)-\hat{\mathcal M}(6)}
=
\mathbf R_{4-1}
=
\mathbf R_3,
\]
which carries $\mu_1$ back to $\mu_4$. Thus $\mathcal M$ transports both the individual modal orbit covers and the transformations between them into the target modal orbit cover system.
\end{example}

\paragraph{The Category of Modal Orbit Cover Systems}
We are now in a position to define the category of modal orbit cover systems.
\begin{definition}
\label{def:category-of-modal-orbit-cover-systems}
The category $\ModeOrbCovSys$ has:
\begin{enumerate}
\item as objects, modal orbit cover systems $([X],\mathbf{c},I)$;
\item as morphisms, quadruples
\[
\mathcal M
=
(\Delta,\mathbf T_p, \bfR_q, f) :
[X^{(\mathbf c)}]_I
\longrightarrow
[Y^{(\mathbf d)}]_J,
\]
as in Definition~\ref{def:morphisms-of-modal-orbit-cover-systems}. 
\end{enumerate}
\end{definition}

By Definition~\ref{def:morphisms-of-modal-orbit-cover-systems}, every such morphism $\mathcal M$ acts as a functor $[X^{(\mathbf c)}]_I \to [Y^{(\mathbf d)}]_J$, giving $\ModeOrbCovSys$ the structure of a category.

\begin{remark}
Unlike a typical functor, a morphism $\mathcal{M}$ of modal orbit cover systems canonically determines not only the image of each object, but also a distinguished modal orbit cover morphism from the object to its image. This additional information reflects the fact that the functor is induced by concrete musical transformations—namely a deformation, a translation, a rotation, and a linear automorphism coupled with a rotation of the source interval composition.
\end{remark}

\subsubsection{The Category of Harmonic Vocabularies}

Equipping a modal orbit cover system with deformation families produces a harmonic vocabulary $([X], \mathbf{c}, I, \mathcal{D})$. As discussed earlier, deformation families are naturally specified only locally within a given modal orbit cover system, and there is therefore no \emph{canonical}\footnote{There is, of course, a mathematically meaningful notion of preserving deformation data, as discussed in the deformation-family fibration (\S~\ref{subsub:the-deformation-family-fibration}).} musically meaningful notion of preserving deformation data between different harmonic vocabularies. Consequently, the category $\HarmVocab$ has harmonic vocabularies as its objects, while its morphisms are simply the morphisms of the underlying modal orbit cover systems. Thus, $\HarmVocab$ differs from $\ModeOrbCovSys$ only in its objects, which are now equipped with deformation data.

\subsubsection{The Category of Functional Harmonic Systems}
\label{subsub:the-category-of-functional-harmonic-systems}

Recall from \S~\ref{subsub:harmonic-functions-dependent-on-deformation-families} that there are two ways to endow a deformation-equipped modal orbit cover $([X], Y^{(A)}, \mu_i, \mathcal{D})$ with a functional assignment. The first approach regards the functional assignment as \emph{independent} of the deformation family. In this case, one defines
\[
\Phi : Y^{(A)} \longrightarrow \mathcal{P}(F_{\spec}),
\]
and each deformation inherits this assignment by composition with the canonical projection
\[
\mathcal{D}(Y^{(A)})
\xlongrightarrow{p_{Y^{(A)}}}
Y^{(A)}
\xlongrightarrow{\Phi}
\mathcal{P}(F_{\spec}).
\]
The second approach regards the functional assignment as \emph{dependent} on the deformation family, so that the assignment is instead given directly by
\[
\Phi : \mathcal{D}(Y^{(A)}) \longrightarrow \mathcal{P}(F_{\spec}).
\]

Recall that morphisms in $\HarmVocab$ do not include any maps between deformation families. Consequently, only the first notion of functional assignment is compatible with the morphisms of $\HarmVocab$. For, if functional assignments are defined directly on deformation families, then transporting them along morphisms would require corresponding maps between the deformation families themselves, but such maps are deliberately excluded from the definition of $\HarmVocab$.

To see this concretely, suppose
\[
\mathbf{H}_{\mathcal D}
=
([X], \mathbf{c}, I, \mathcal D),
\qquad
\mathbf{H}_{\mathcal E}
=
([X], \mathbf{c}, I, \mathcal E),
\]
are harmonic vocabularies differing only in their deformation families. Let
\[
\mathcal{M} :
\mathbf{H}_{\mathcal D}
\longrightarrow
\mathbf{H}_{\mathcal E}
\]
denote the identity morphism on the underlying modal orbit cover system. Since $\mathcal{M}$ contains no information relating $\mathcal D$ and $\mathcal E$, it induces no maps
\[
\mathcal D(i)(Y^{(\mathbf c)})
\longrightarrow
\mathcal E(i)(Y^{(\mathbf c)})
\]
for any mode $i \in I$. Consequently, there is no natural way to transport functional assignments
\[
\Phi :
\mathcal D(i)(Y^{(\mathbf c)})
\longrightarrow
\mathcal P(F_{\spec})
\]
to corresponding assignments on $\mathcal E(i)(Y^{(\mathbf c)})$.

For this reason, we now define functional harmonic systems using the first approach, in which functional assignments are made on the underlying modal orbit covers and inherited by the deformation families through the fibers of the canonical projections.


Let $\mathbf{V}=([X],\mathbf{c},I,\mathcal D)$ be a harmonic vocabulary, and let $\mathcal F=(F_{\core},F_{\spec},\varphi)$ be a harmonic function schema. A functional assignment for $\mathbf V$ is an element
\[
\Phi\in
\prod_{i\in I}
\left(\mathcal P(F_{\spec})\right)^{\mathbb Z_7^{(\mathbf c)}}
\]
(compare \eqref{eq:F(i)} and \eqref{eq:functional-assignment-family}). Thus, for each mode $i\in I$, there is a map
\[
\Phi_i:\mathbb Z_7^{(\mathbf c)}
\longrightarrow
\mathcal P(F_{\spec}).
\]
A functional harmonic system whose functional assignment is independent of the deformation family is therefore a tuple
\[
\mathbf H=
([X],\mathbf c,I,\mathcal D,(\mathcal F,\Phi)).
\]

\begin{definition}[Morphisms of Functional Harmonic Systems]
\label{def:morphisms-of-functional-harmonic-systems}
Let
\[
\mathbf H=
([X],\mathbf c,I,\mathcal D,(\mathcal F,\Phi))
\qquad\text{and}\qquad
\mathbf H'
=
([Y],\mathbf d,J,\mathcal E,(\mathcal F',\Phi'))
\]
be functional harmonic systems, and let
\[
\mathcal M
=
(\Delta,\mathbf T_p,\mathbf R_q, f) :
([X],\mathbf c,I,\mathcal D)
\longrightarrow
([Y],\mathbf d,J,\mathcal E)
\]
be a morphism of harmonic vocabularies. Writing $\hat{\mathcal M}:I\to J$ for the induced mode map, a morphism
\[
(\mathcal M,g_{\core}) :
\mathbf H
\longrightarrow
\mathbf H'
\]
consists of a set map
\[
g_{\core}:F_{\core}\longrightarrow F'_{\core},
\]
from the core harmonic functions of $\mathcal F$ to those of $\mathcal F'$
(see Definition~\ref{def:func-fibration}), such that, for every $i\in I$, the following condition holds:
\[
g_{\core}(\Phi_{i,\core}(A)) \subseteq \Phi'_{\hat{\mathcal M}(i),\core}(\hat f(A)),
\]
for every $A \in \Z_7^{(\mathbf{c})}$. 
\end{definition}

We denote the category of functional harmonic systems by $\FuncHarmSys$.

\subsubsection{The Category of Harmonic Practices}
Deriving the category of harmonic practices from the category $\FuncHarmSys$ of functional harmonic systems is straightforward.
Recall from \S~\ref{para:the-function-level} that every functional harmonic system $\mathbf{H}$ with core function set $F_{\core}$ determines the set of functional region symbols
\[
\mathbb{R}_{\mathbf H}
\coloneqq
\{\alpha_{\mathrm R}\mid \alpha\in F_{\core}\}
\cong
F_{\core}.
\]
Recall also that the definition of $\mathrm{ExpOpt}$ in~\eqref{eq:expopt} determines, for each $\alpha_{\mathrm R}\in\mathbb R_{\mathbf H}$, the set $\mathrm{ExpOpt}_{\mathbf H}(\alpha_{\mathrm R})$, while the definition of $\mathrm{ExpRules}$ in~\eqref{eq:exprules} determines the set
\[
\mathrm{ExpRules}_{\mathbf H}(\alpha_{\mathrm R})
\coloneqq
\mathcal P\!\left(
\mathrm{ExpOpt}_{\mathbf H}(\alpha_{\mathrm R})
\right)
\times
\{\alpha\}.
\]
Consequently,
\[
\mathrm{FuncExp}_{\mathbf H}
\coloneqq
\prod_{\alpha_{\mathrm R}\in\mathbb R_{\mathbf H}}
\mathrm{ExpRules}_{\mathbf H}(\alpha_{\mathrm R})
\]
is the set of functional expansion specifications for $\mathbf H$. A \emph{syntactic specification} (Definition~\ref{def:syntactic-specification}) for $\mathbf H$ is therefore a pair
\[
(R,E)
\in
\mathbb R_{\mathbf H}
\times
\mathrm{FuncExp}_{\mathbf H}.
\]

Now let
\[
\mathbf H=
([X],\mathbf c,I,\mathcal D,(\mathcal F,\Phi)),
\qquad
\mathbf H'
=
([Y],\mathbf d,J,\mathcal E,(\mathcal F',\Phi'))
\]
be functional harmonic systems, and let
\[
(\mathcal M,g_{\core}) :
\mathbf H
\longrightarrow
\mathbf H'
\]
be a morphism. Since the constructions $\mathbb R_\mathbf{H}$, $\mathrm{ExpOpt}$, $\mathrm{ExpRules}$, and $\mathrm{FuncExp}$ are defined canonically from the core harmonic function set, the map
\[
g_{\core} :
F_{\core}
\longrightarrow
F'_{\core}
\]
induces maps
\[
\mathbb R_{\mathbf H}
\longrightarrow
\mathbb R_{\mathbf H'},
\qquad
\mathrm{ExpOpt}_{\mathbf H}(\alpha_{\mathrm R})
\longrightarrow
\mathrm{ExpOpt}_{\mathbf H'}\!\left(g_{\core}(\alpha)_{\mathrm R}\right),
\]
for each $\alpha_{\mathrm R}\in\mathbb R_{\mathbf H}$, and hence induces maps
\[
\mathrm{ExpRules}_{\mathbf H}(\alpha_{\mathrm R})
\longrightarrow
\mathrm{ExpRules}_{\mathbf H'}\!\left(g_{\core}(\alpha)_{\mathrm R}\right),
\]
and
\[
\mathrm{FuncExp}_{\mathbf H}
\longrightarrow
\mathrm{FuncExp}_{\mathbf H'}.
\]
Therefore $g_{\core}$ induces a map
\[
\mathbb R_{\mathbf H}
\times
\mathrm{FuncExp}_{\mathbf H}
\longrightarrow
\mathbb R_{\mathbf H'}
\times
\mathrm{FuncExp}_{\mathbf H'}
\]
between syntactic specifications.

\begin{definition}[Morphisms of Harmonic Practices]
Let
\[
\mathbf P=(\mathbf H,\Sigma=(R,E))
\qquad\text{and}\qquad
\mathbf P'=(\mathbf H',\Sigma'=(R',E'))
\]
be harmonic practices. A \emph{morphism of harmonic practices}
\[
(\mathcal M,g_{\core}) :
\mathbf P
\longrightarrow
\mathbf P'
\]
is a morphism
\[
(\mathcal M,g_{\core}) :
\mathbf H
\longrightarrow
\mathbf H'
\]
of functional harmonic systems such that
\[
g_{\core}(R)\subseteq R'
\qquad\text{and}\qquad
g_{\core}(E)\subseteq E'.
\]
\end{definition}

Functional harmonic systems and their morphisms form a category, which we denote by $\HarmPractice$.

\begin{remark}
Rather than requiring equality, the inclusion condition allows the codomain harmonic practice to extend the syntactic specification of the domain. Thus every functional region and expansion rule of $(R,E)$ is preserved under $g_{\core}$, while additional functional regions and expansion rules may be introduced in $(R',E')$. Morphisms of harmonic practices therefore allow for syntactic enrichments of one harmonic practice by another.
\end{remark}

\begin{example}
\label{ex:morphism-of-harmonic-practices}
We close with an example of a morphism of harmonic practices. Let $\mathbf{P}$ be a harmonic practice defined by
\begin{enumerate}
\item $[X]$ the diatonic translation orbit;
\item $\mathbf{c} = (2,2,3)$ the tertian interval composition;
\item $I = \{1,6\}$ the Ionian and Aeolian modes;
\item $\mathcal D$ arbitrary (we do not need to specify morphism data on this component);
\item $(\mathcal{F}, \Phi)$ given by $\mathcal{F} = (F_{\core}, F_{\spec}, \varphi)$ where
\[
F_{\core} = \{\mathrm{T}, \mathrm{S}, \mathrm{D}\}, \qquad
F_{\spec} = \{\mathrm{T}, \mathrm{Tp}, \mathrm{S}, \mathrm{Sp}, \mathrm{S}^*, \mathrm{D}, \mathrm{Dp}, \mathrm{D}^*\},
\]
and $\varphi : F_{\spec} \to F_{\core}$ in the obvious way. The functional assignment $\Phi_1$ on the Roman numerals is given by
\begin{align*}
\Phi_1(\mathrm{I})   &= \{\mathrm{T}\},  & \Phi_1(\mathrm{II})  &= \{\mathrm{Sp}\}, & \Phi_1(\mathrm{III}) &= \{\mathrm{Dp}\}, \\
\Phi_1(\mathrm{IV})  &= \{\mathrm{S}\},  & \Phi_1(\mathrm{V})   &= \{\mathrm{D}\},  & \Phi_1(\mathrm{VI})  &= \{\mathrm{Tp}\}, \\
\Phi_1(\mathrm{VII}) &= \{\mathrm{D}^*\}, & & &
\end{align*}
and $\Phi_6$ by
\begin{align*}
\Phi_6(\mathrm{I})   &= \{\mathrm{T}\},   & \Phi_6(\mathrm{II})  &= \{\mathrm{S}^*\}, & \Phi_6(\mathrm{III}) &= \{\mathrm{Tp}\}, \\
\Phi_6(\mathrm{IV})  &= \{\mathrm{S}\},   & \Phi_6(\mathrm{V})   &= \{\mathrm{D}\},   & \Phi_6(\mathrm{VI})  &= \{\mathrm{Sp}\}, \\
\Phi_6(\mathrm{VII}) &= \{\mathrm{Dp}\},  & & &
\end{align*}
\item $\Sigma = (R,E)$ as in Rohrmeier's original (\S~\ref{para:phrase-level-rewrite-rules}–\ref{para:functional-level-rewrite-rules}).
\end{enumerate}
This specifies the data for the domain of an ordinary common-practice harmonic practice.

Now define $X' = \{\mathrm{A}, \mathrm{B}, \mathrm{C}, \mathrm{D}, \mathrm{E}, \mathrm{F}^\sharp, \mathrm{G}^\sharp\}$, and define the harmonic practice $\mathbf{P}'$ given by
\begin{enumerate}
\item $[X']$;
\item $\mathbf{c}' = (2, 3, 1, 1)$;
\item $I' = \{1,2\}$;
\item $\mathcal{D}'$ (which we will specify later);
\item $(\mathcal{F}', \Phi')$, where $\mathcal{F}' = \mathcal{F}$ is as above, and $\Phi'$ is defined as follows. The assignment $\Phi'_1$ is given by
\begin{align*}
\Phi'_1(\mathrm{I})   &= \{\mathrm{T}\},  & \Phi'_1(\mathrm{II})  &= \{\mathrm{Tp}\}, & \Phi'_1(\mathrm{III}) &= \{\mathrm{S}\}, \\
\Phi'_1(\mathrm{IV})  &= \{\mathrm{Sp}\}, & \Phi'_1(\mathrm{V})   &= \{\mathrm{D}^*\}, & \Phi'_1(\mathrm{VI})  &= \{\mathrm{D}\}, \\
\Phi'_1(\mathrm{VII}) &= \{\mathrm{Dp}\}, & & &
\end{align*}
and $\Phi'_2$ by
\begin{align*}
\Phi'_2(\mathrm{I})   &= \{\mathrm{T}\},  & \Phi'_2(\mathrm{II})  &= \{\mathrm{Sp}\}, & \Phi'_2(\mathrm{III}) &= \{\mathrm{S}\}, \\
\Phi'_2(\mathrm{IV})  &= \{\mathrm{S}^*\}, & \Phi'_2(\mathrm{V})   &= \{\mathrm{Dp}\}, & \Phi'_2(\mathrm{VI})  &= \{\mathrm{D}\}, \\
\Phi'_2(\mathrm{VII}) &= \{\mathrm{Tp}\}, & & &
\end{align*}
\item $\Sigma' = \Sigma = (R, E)$ again as in Rohrmeier's original.
\end{enumerate}

Define the identity morphism
\[
g_{\core} : F_{\core} \longrightarrow F'_{\core}
\]
on the respective components (since $F'_{\core} = F_{\core}$), and define the morphism $\mathcal M = (\Delta, \bfT_p, \bfR_q, f)$ of modal orbit cover systems by:
\begin{enumerate}
\item $\Delta = (0,0,0,0,1,1,0) \in \Def(X)$;
\item $p = 0$;
\item $q = 3$;
\item $f(x) = 3x$.
\end{enumerate}
We verify that $(\mathcal{M}, g_{\core})$ constitutes a morphism
\[
(\mathcal{M}, g_{\core}) : \mathbf{P} \longrightarrow \mathbf{P}'
\]
of harmonic practices.

First, $\Delta$ extends to a map $\Delta : [X] \to [X']$: relative to the Aeolian mode, it raises the sixth and seventh scale degrees, so that $\Delta(X)$ is the corresponding melodic minor scale. Next we verify the condition of Definition~\ref{def:morphisms-of-modal-orbit-cover-systems} that
\[
\hat{\mathcal M} = \delta \circ \bfR_q \circ h : I \longrightarrow I'
\]
satisfies $\hat{\mathcal{M}}(I) \subseteq I'$, where $h \coloneqq \ell(f)$, i.e.\ $h(x) = f(x) = 3x$. Here $\delta(x) = x + 2$, so
\[
\hat{\mathcal M}(x) = \delta \circ \bfR_q \circ h(x) = 3x + (3 + 2) = 3x + 5,
\]
and hence $\hat{\mathcal{M}}(\{1,6\}) = \{1,2\} = I' \subseteq I'$, as required.

Turning to the chord-level data: since $\hat{\mathbf c}(0) = \{0,2,4\}$ and $\hat{\mathbf c}'(0) = \{0,2,5,6\}$, we have
\[
f(\hat{\mathbf c}(0)) = 3\cdot\{0,2,4\} = \{0,6,12\} \pmod 7 = \{0,5,6\} \subseteq \{0,2,5,6\},
\]
confirming $f \in \mathrm{Hom}_{\mathbf C(7)}(\mathbf c, \mathbf c')$. Thus we have chord maps
\[
\hat{f}_{i, \hat{\mathcal{M}}(i)} : W^{(\mathbf c)} \longrightarrow (\Delta(W))^{(\mathbf c')}
\]
for every mode $(W, \mu_i) \in [X^{(\mathbf c)}]_I$. As a concrete example, take $(W, \mu_1)$ the C Ionian mode. Then
\[
h_{1, \hat{\mathcal M}(1)} = h_{1,1} : W \longrightarrow \Delta(X)
\]
is given by
\[
\begin{aligned}
\mathrm C &\mapsto \mathrm A, \\
\mathrm D &\mapsto \mathrm D, \\
\mathrm E &\mapsto \mathrm G^\sharp, \\
\mathrm F &\mapsto \mathrm C, \\
\mathrm G &\mapsto \mathrm F^\sharp, \\
\mathrm A &\mapsto \mathrm B, \\
\mathrm B &\mapsto \mathrm E.
\end{aligned}
\]
The induced chord map $\hat{f}_{1, \hat{\mathcal{M}}(1)}$ is given by
\begin{align*}
\mathrm{I}   = \{\mathrm C,\mathrm E,\mathrm G\} &\longmapsto \{\mathrm A,\mathrm C,\mathrm F^\sharp,\mathrm G^\sharp\} = \mathrm{I}', \\
\mathrm{II}  = \{\mathrm D,\mathrm F,\mathrm A\} &\longmapsto \{\mathrm D,\mathrm F^\sharp,\mathrm B,\mathrm C\} = \mathrm{IV}', \\
\mathrm{III} = \{\mathrm E,\mathrm G,\mathrm B\} &\longmapsto \{\mathrm G^\sharp,\mathrm B,\mathrm E,\mathrm F^\sharp\} = \mathrm{VII}', \\
\mathrm{IV}  = \{\mathrm F,\mathrm A,\mathrm C\} &\longmapsto \{\mathrm C,\mathrm E,\mathrm A,\mathrm B\} = \mathrm{III}', \\
\mathrm{V}   = \{\mathrm G,\mathrm B,\mathrm D\} &\longmapsto \{\mathrm F^\sharp,\mathrm A,\mathrm D,\mathrm E\} = \mathrm{VI}', \\
\mathrm{VI}  = \{\mathrm A,\mathrm C,\mathrm E\} &\longmapsto \{\mathrm B,\mathrm D,\mathrm G^\sharp,\mathrm A\} = \mathrm{II}', \\
\mathrm{VII} = \{\mathrm B,\mathrm D,\mathrm F\} &\longmapsto \{\mathrm E,\mathrm G^\sharp,\mathrm C,\mathrm D\} = \mathrm{V}',
\end{align*}
where the target Roman numerals are now read relative to the target mode's own tonic $\mathrm A$.

We also observe that for each $i \in I$,
\[
\Phi'_{\hat{\mathcal M}(i)} = \Phi_{\hat f_{i, \hat{\mathcal M}(i)}}
\]
is the pushforward of $\Phi_i$ along $\hat f_{i, \hat{\mathcal M}(i)}$. For $i=1$, this pushforward is given by
\begin{align*}
\Phi_{\hat f_{1,1}}(\{\mathrm A,\mathrm C,\mathrm F^\sharp,\mathrm G^\sharp\}) &= \{\mathrm T\},  & \Phi_{\hat f_{1,1}}(\{\mathrm B,\mathrm D,\mathrm G^\sharp,\mathrm A\}) &= \{\mathrm{Tp}\}, \\
\Phi_{\hat f_{1,1}}(\{\mathrm C,\mathrm E,\mathrm A,\mathrm B\}) &= \{\mathrm S\},   & \Phi_{\hat f_{1,1}}(\{\mathrm D,\mathrm F^\sharp,\mathrm B,\mathrm C\}) &= \{\mathrm{Sp}\}, \\
\Phi_{\hat f_{1,1}}(\{\mathrm E,\mathrm G^\sharp,\mathrm C,\mathrm D\}) &= \{\mathrm D^*\}, & \Phi_{\hat f_{1,1}}(\{\mathrm F^\sharp,\mathrm A,\mathrm D,\mathrm E\}) &= \{\mathrm D\}, \\
\Phi_{\hat f_{1,1}}(\{\mathrm G^\sharp,\mathrm B,\mathrm E,\mathrm F^\sharp\}) &= \{\mathrm{Dp}\}, & &
\end{align*}
which agrees exactly with $\Phi'_1$ chord-by-chord, so $g_{\core}$ trivially satisfies the inclusion condition of Definition~\ref{def:morphisms-of-functional-harmonic-systems} on this component.

Let us spell out the other mode $(W, \mu_6) \in [X^{(\mathbf c)}]_I$. We have
\[
h_{6, \hat{\mathcal M}(6)} = h_{6,2} : W \longrightarrow \Delta(W)
\]
given by
\[
\begin{aligned}
\mathrm A &\mapsto \mathrm B, \\
\mathrm B &\mapsto \mathrm E, \\
\mathrm C &\mapsto \mathrm A, \\
\mathrm D &\mapsto \mathrm D, \\
\mathrm E &\mapsto \mathrm G^\sharp, \\
\mathrm F &\mapsto \mathrm C, \\
\mathrm G &\mapsto \mathrm F^\sharp,
\end{aligned}
\]
The chord map $\hat{f}_{6, 2}$ is given by
\begin{align*}
\mathrm{I}   = \{\mathrm A,\mathrm C,\mathrm E\} &\longmapsto \{\mathrm B,\mathrm D,\mathrm G^\sharp,\mathrm A\} = \mathrm{I}', \\
\mathrm{II}  = \{\mathrm B,\mathrm D,\mathrm F\} &\longmapsto \{\mathrm E,\mathrm G^\sharp,\mathrm C,\mathrm D\} = \mathrm{IV}', \\
\mathrm{III} = \{\mathrm C,\mathrm E,\mathrm G\} &\longmapsto \{\mathrm A,\mathrm C,\mathrm F^\sharp,\mathrm G^\sharp\} = \mathrm{VII}', \\
\mathrm{IV}  = \{\mathrm D,\mathrm F,\mathrm A\} &\longmapsto \{\mathrm D,\mathrm F^\sharp,\mathrm B,\mathrm C\} = \mathrm{III}', \\
\mathrm{V}   = \{\mathrm E,\mathrm G,\mathrm B\} &\longmapsto \{\mathrm G^\sharp,\mathrm B,\mathrm E,\mathrm F^\sharp\} = \mathrm{VI}', \\
\mathrm{VI}  = \{\mathrm F,\mathrm A,\mathrm C\} &\longmapsto \{\mathrm C,\mathrm E,\mathrm A,\mathrm B\} = \mathrm{II}', \\
\mathrm{VII} = \{\mathrm G,\mathrm B,\mathrm D\} &\longmapsto \{\mathrm F^\sharp,\mathrm A,\mathrm D,\mathrm E\} = \mathrm{V}',
\end{align*}
The pushforward of $\Phi_6$ along $\hat f_{6,2}$ is given by
\begin{align*}
\Phi_{\hat f_{6,2}}(\{\mathrm B,\mathrm D,\mathrm G^\sharp,\mathrm A\}) &= \{\mathrm T\},  & \Phi_{\hat f_{6,2}}(\{\mathrm C,\mathrm E,\mathrm A,\mathrm B\}) &= \{\mathrm{Sp}\}, \\
\Phi_{\hat f_{6,2}}(\{\mathrm D,\mathrm F^\sharp,\mathrm B,\mathrm C\}) &= \{\mathrm S\},   & \Phi_{\hat f_{6,2}}(\{\mathrm E,\mathrm G^\sharp,\mathrm C,\mathrm D\}) &= \{\mathrm S^*\}, \\
\Phi_{\hat f_{6,2}}(\{\mathrm F^\sharp,\mathrm A,\mathrm D,\mathrm E\}) &= \{\mathrm{Dp}\}, & \Phi_{\hat f_{6,2}}(\{\mathrm G^\sharp,\mathrm B,\mathrm E,\mathrm F^\sharp\}) &= \{\mathrm D\}, \\
\Phi_{\hat f_{6,2}}(\{\mathrm A,\mathrm C,\mathrm F^\sharp,\mathrm G^\sharp\}) &= \{\mathrm{Tp}\}, & &
\end{align*}
which again agrees exactly with $\Phi'_2$ chord-by-chord. Having verified the inclusion condition of Definition~\ref{def:morphisms-of-functional-harmonic-systems} for both $i=1$ and $i=6$, we conclude that $(\mathcal{M}, g_{\core})$ is indeed a morphism
\[
(\mathcal{M}, g_{\core}) : \mathbf{P} \longrightarrow \mathbf{P}'
\]
of harmonic practices.

We have not specified the deformation data $\mathcal{D}'$ for $\mathbf{P}'$; as with much of this data, it may often develop gradually through the course of composing in a harmonic practice, and we specify here only what the derivation below (Figure~\ref{fig:syntax-preservation}) requires. Now the chord with the dominant function D in $(\Delta(W)^{\mathbf c'}, \mu_1)$ is the chord $\mathrm{VI}' = \{ \mathrm{F}^\sharp, \mathrm{A}, \mathrm{D}, \mathrm{E}\}$. To get a secondary dominant, we first need a deformation of $\mathrm{VI}'$ that puts it in a transpositional relationship with $\mathrm{I}'$. We do this by raising the fourth and fifth degrees, so $(\sharp 4 \sharp 5) \in \mathcal{D}'(1)$, and we have
\[
(\sharp 4 \sharp 5)(\mathrm{VI}') = \bfT_9(\mathrm{I}'). 
\]
Hence we can take secondary chords of $(\sharp 4 \sharp 5)(\mathrm{VI}')$. 

To illustrate then how we can translate actual harmonic progressions from $\mathbf P$ to $\mathbf P'$, recall the derivation for the Bach progression of Figure~\ref{fig:Bach-derivation}. In common-practice tonality, a secondary dominant of $\mathrm V$ requires no preliminary deformation, since $\mathrm V$ is already a transposition of $\mathrm I$ (both being major triads); a single application of the dominant relation therefore suffices to reach $\mathrm{V/V}$, which resolves directly to the ordinary $\mathrm V$. In $\mathbf P'$, by contrast, $\mathrm{VI}'$ is not itself a transposition of $\mathrm I'$, so before we can take its secondary dominant we must first deform it into one via $(\sharp 4 \sharp 5)$. The resulting secondary dominant $\mathrm{VI}'/\mathrm{VI}'^{\sharp 4 \sharp 5}$ should then resolve to $\mathrm{VI}'^{\sharp 4 \sharp 5}$---the chord of which it is literally the dominant---rather than to the ordinary $\mathrm{VI}'$ that the original progression actually targets. Realizing the same musical outcome in $\mathbf P'$ therefore costs one additional step: a second duplication of the functional region $\mathrm D_{\mathrm R}$, carrying the derivation from the formally correct resolution $\mathrm{VI}'^{\sharp 4 \sharp 5}$ back to the underlying $\mathrm{VI}'$. Using the same derivation as a template, adjusted for this one additional duplication, we obtain the nearly syntactically identical progression of Figure~\ref{fig:syntax-preservation} in $\mathbf P'$.
\end{example}

\begin{sidewaysfigure}[htbp]
\centering
\includegraphics[width=0.85\linewidth]{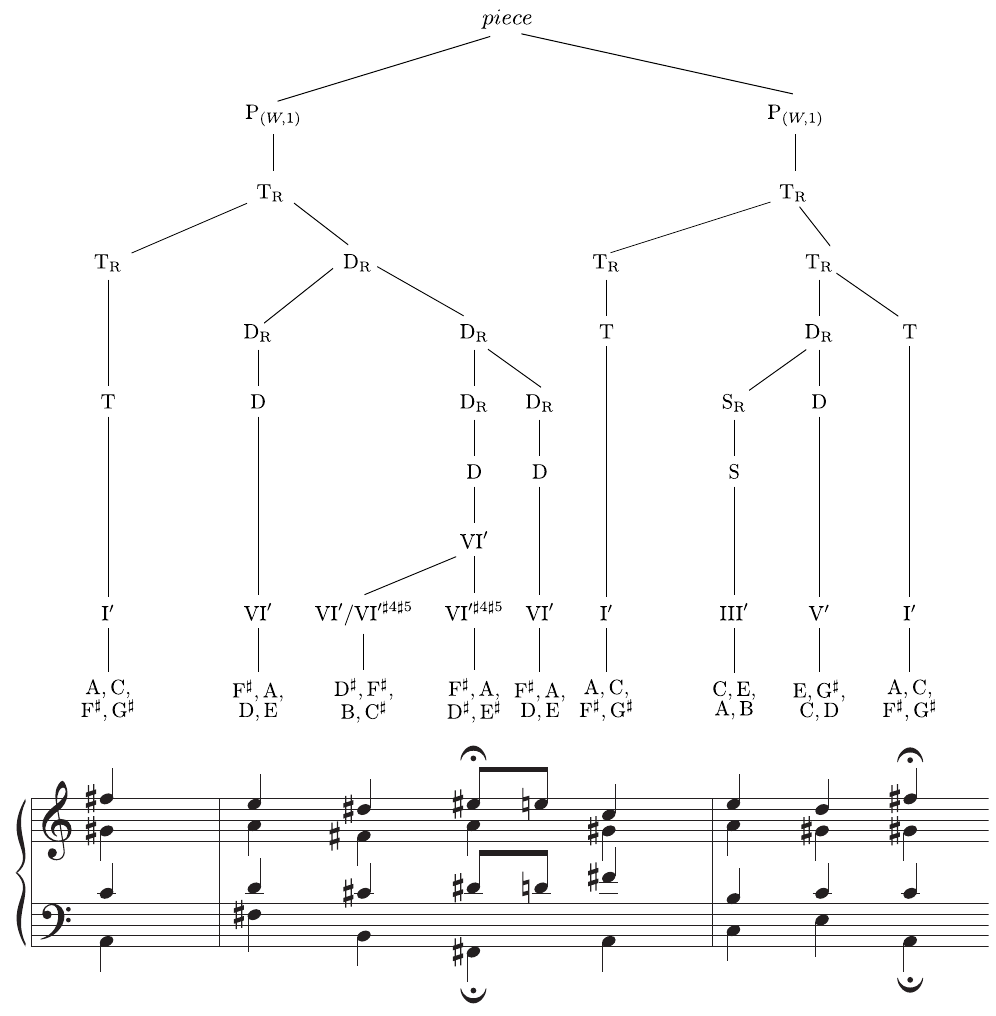}
\caption{The derivation of Figure~\ref{fig:Bach-derivation}, transported along the morphism $(\mathcal M, g_{\core}) : \mathbf P \to \mathbf P'$, with the single additional duplication of the third derived $\mathrm D_{\mathrm R}$ discussed in the text, carrying the derivation from the secondary dominant's formal resolution $\mathrm{VI}'^{\sharp 4 \sharp 5}$ to the underlying $\mathrm{VI}'$.}
\label{fig:syntax-preservation}
\end{sidewaysfigure}

\section{Conclusion}
\label{sec:conclusion}

We have now achieved what we set out to do: provide mathematical principles of a generalized tonal practice. We began with an algebraic theory of modes and scales (\S~\ref{sec:modes-and-scales}), the basis of the whole theory, and then considered three special classes of mode isomorphisms---translations, rotations, and deformations---which supplied the fundamental building blocks for everything that followed. No other morphisms were considered at the mode level in this paper; these three constitute a musically prominent class, one already familiar from much common-practice-period tonality, as each preserves one piece of musically relevant data: translations preserve mode type, rotations preserve scale, and deformations (often, although not always) preserve tonic (Table~\ref{tab:mode-isomorphisms}).

Using deformations, we defined the category $\HDef$ of translation orbits of heptatonic scales, with morphisms the closure of deformations under composition, forming the base of the hierarchy of fibrations constructed in \S~\ref{sec:a-hierarchy-of-fibrations-over-heptatonic-scales}. We then presented categories of orbit covers (\S~\ref{sec:orbit-covers}) and showed that two orbit covers are isomorphic exactly when their nerves are (Theorem~\ref{th:nerve-isomorphism})---a musically salient result.

The hierarchy of fibrations in \S~\ref{sec:a-hierarchy-of-fibrations-over-heptatonic-scales} is the core mathematical machinery of the paper. By associating to each translation orbit in $\HDef$ its translation-rotation action groupoid (\S~\ref{subsub:the-translation-rotation-action-groupoid-fibration}), we obtain the category of heptatonic modes with translations, rotations, and deformations as morphisms, but with deformations now recast as transportations of one action groupoid onto another. Above this we took the orbit covers over each mode, generalizing the classical covering of diatonic scales by tertian triads. Over that, we took deformation families, generalizing the process of altering tones in a key to obtain tones outside it---the source, in this framework, of secondary chords and of the pivot chords that support modulation. Finally, assigning harmonic functions to the complete set of chords in an orbit cover equipped with a deformation family culminates the hierarchy (Figure~\ref{fig:fibration-hierarchy}).

This hierarchy supplies the mathematical groundwork for the ultimate object of our theory---\emph{harmonic practices} (\S~\ref{sec:formalizing-harmonic-practices}). It is here that the concrete musical payoff appears: a harmonic practice is a direct generalization of common-practice tonality, especially as formalized by Rohrmeier~\cite{rohrmeier2011towards}. Recall that a harmonic practice is given by six pieces of data: a choice of scale class, an orbit-cover structure, a choice of modes, a family of chromatic alterations, an assignment of harmonic functions, and a generative syntax. This is the formal object promised in the introduction (\S~\ref{sec:introduction}): a harmonic vocabulary equipped with a functional assignment and a generative syntax, together with a notion of morphism that lets one harmonic practice inherit and enrich the structure of another.

What should be emphasized is not only that harmonic practices have been formally defined, but that they form a category. This is not merely an interesting formal result; it is one of the core justifications for our claim that the definition generalizes common-practice tonality. A morphism of harmonic practices constitutes an inheritance and enrichment of the structure of one harmonic practice in another. A morphism out of common-practice tonality into another harmonic practice, for instance, preserves the structure of common-practice tonality. It bears reiterating that such morphisms were used as the basis of the author's compositional practice well before the formal definition was worked out. This is therefore a case of theory serving the practice of composition, rather than composition proceeding as a proof of concept for the theory. I hope other composers find here a tool useful for their own aesthetic aims.

As concrete evidence of the framework's compositional use, Example~\ref{ex:piano-vars} demonstrates an adaptation of a phrase from the theme of my \emph{Variations for Piano}, Op.~31, adjusted slightly from the actual piece for pedagogical demonstration. Example~\ref{ex:morphism-of-harmonic-practices} shows a further case: a morphism of harmonic practices that translates a Bach progression (Figure~\ref{fig:Bach-derivation}) into a foreign harmonic practice while preserving a nearly identical syntactic derivation (Figure~\ref{fig:syntax-preservation}).

This bears on a point raised in the introduction (\S~\ref{sec:introduction}): common-practice tonality is economical. It asks comparatively little of the listener at the point of listening while returning a great deal, and that is an achievement most post-tonal harmonic practices have not particularly tried to reproduce.\footnote{Feldman~\cite[174]{feldman2000give} treated this as a deliberate refusal: functional harmony, on his account, ``hears for us''---it does the listener's work in advance. His own music withholds that convenience.} The aim of this theory has been to keep that economy without also keeping the vocabulary that historically carried it.
Common-practice tonality's tertian, diatonic vocabulary is one way of supporting a functional syntax, but it need not be the only one. Harmonic practices preserve the underlying machinery---cadence, secondary chords, modulation via pivot chords, the nesting of functional regions, prolongation, and so on---while generalizing the harmonic vocabulary a composer can use to write with it. This realizes the original aesthetic motivation of the thesis: the economy of common-practice tonality, now available across a far wider range of harmonic palettes.

\subsection*{Future Directions}

The most immediate direction for future work is simply composing with the theory as it now stands. The framework has already guided my own compositional practice implicitly, prior to its explicit formulation; now that it has one, my hope is that other composers will find it useful for theirs too.

A more practical direction is implementation in software. Computing all the data a harmonic practice makes available---data a composer needs in order to write with it efficiently---is tedious and error-prone by hand. Software would also make other tasks tractable that are not, by hand: enumerating the morphisms that exist between orbit covers, harmonic practices, and the other objects discussed throughout the paper, for
instance.

The most substantial direction, and the one untouched in this document, is a formalism for melody within this framework: melodic syntax, voice-leading syntax, and the interaction between the two. What we have covered here are the mathematical principles of a generalized tonal practice with respect to its \emph{harmonic} syntax; a Part~II would develop the analogous principles for its \emph{melodic} syntax.

\pagebreak

\printbibliography

\end{document}